\setlist[enumerate]{%
  leftmargin=*,%
  label=(\roman*)} 
\setlist[itemize]{leftmargin=*}
\newtheorem{theorem}{Theorem}[section]
\newtheorem{lemma}[theorem]{Lemma}
\newtheorem{proposition}[theorem]{Proposition}
\newtheorem{corollary}[theorem]{Corollary}
\newtheorem{question}[theorem]{Question}
\theoremstyle{definition}%
\newtheorem{example}[theorem]{Example}
\newtheorem{remark}[theorem]{Remark}
\DeclareMathOperator{\hilb}{Hilb}
\DeclareMathOperator{\rank}{rank}
\DeclareMathOperator{\Kos}{Kos}
\DeclareMathOperator{\GL}{GL}%
\DeclareMathOperator{\Gr}{Gr}%
\DeclareMathOperator{\Hom}{Hom}%
\DeclareMathOperator{\Soc}{Soc}%
\DeclareMathOperator{\Ann}{Ann}%
\DeclareMathOperator{\id}{id}%
\newcommand{\fmm}{\mathfrak{m}}
\newcommand{\cP}{\mathcal{P}}%
\newcommand{\kk}{\mathbbm{k}}
\newcommand{\NN}{\mathbb{N}}
\newcommand{\QQ}{\mathbb{Q}}
\newcommand{\ZZ}{\mathbb{Z}}
\newcommand{\bbA}{\mathbb{A}}
\DeclareMathOperator{\sgn}{sign}
\newcommand{\llrr}[1]{ \langle #1 \rangle } 
\newif\ifhascomments \hascommentstrue
\begin{document}

\begin{abstract}
  Bhargava and the first-named author of this paper introduced a
  functorial Galois closure operation for finite-rank ring extensions,
  generalizing constructions of Grothendieck and Katz--Mazur. In this
  paper, we generalize Galois closures and apply them to construct a new infinite family
  of irreducible components of Hilbert schemes of points. We show that
  these components are elementary, in the sense that they parametrize
  algebras supported at a point. Furthermore, we produce secondary families of elementary components obtained from Galois closures by modding out by suitable socle elements.
\end{abstract}

\title{Galois closures and elementary components of Hilbert schemes of points}

\author{Matthew~Satriano}
\thanks{MS was partially supported by a Discovery Grant from the
  National Science and Engineering Research Council of Canada and a Mathematics Faculty Research Chair.}
\address{Matthew Satriano, Department of Pure Mathematics, University
  of Waterloo}
\email{msatrian@uwaterloo.ca}

\author{Andrew~P.~Staal}
\address{Andrew~P.~Staal, Department of Pure Mathematics, University of
  Waterloo}
\email{andrew.staal@uwaterloo.ca}
\maketitle

\section{Introduction}
\label{sec:intro}


First introduced by Grothendieck \cite{Grothendieck--1961} in 1961,
Hilbert schemes have played a central role in algebraic geometry,
commutative algebra, and algebraic combinatorics. They are fundamental
building blocks in the construction of many moduli spaces, and have
seen numerous broad-ranging applications from the McKay correspondence
\cite{Ito-Nakajima--2000,Bridgeland-King-Reid--2001} to Haiman's proof
of the Macdonald positivity conjecture \cite{Haiman--2001}. While
Hilbert schemes of points on smooth surfaces are irreducible
\cite{Fogarty--1968}, in contrast, Iarrobino \cite{Iarrobino--1972,
  Iarrobino--1973} and Iarrobino--Emsalem
\cite{Iarrobino--Emsalem--1978} showed that for $n\geq3$ and $d$
sufficiently large, the Hilbert scheme of points $\hilb^d(\bbA^n)$ is
reducible. It has remained a notoriously difficult problem to describe
the structure of the irreducible components of $\hilb^d(\bbA^n)$.

The study of all irreducible components of $\hilb^d(\bbA^n)$ can be
reduced to that of \emph{\bfseries elementary components}, namely
irreducible components parametrizing subschemes supported at a point;
this is due to the fact that, generically, every component is
\'etale-locally the product of elementary ones. To date, relatively
few constructions of irreducible components exist in the literature
\cite{Iarrobino--1984, Shafarevich--1990, Iarrobino--Kanev--1999,
  CEVV--2009, Erman--Velasco--2010, Huibregtse--2017,
  Jelisiejew--2019, Jelisiejew--2020, Huibregtse--2021,
  Satriano--Staal--2023} and even less is known about elementary
components. In fact, it was an open question for over 30 years to
determine whether there exists an irreducible component of the
punctual Hilbert scheme $\hilb^d(\mathcal{O}_{\bbA^n,0})$ of dimension
less than $(n-1)(d-1)$, see \cite[p.\ 310]{Iarrobino--1987},
cf.~\cite[p.\ 186]{Iarrobino--Emsalem--1978}; this question was only
recently answered by the authors of this paper \cite[Theorem
  1.2]{Satriano--Staal--2023}.


\vspace{0.6em}

In the current paper, we obtain a new infinite family of elementary
components by applying a seemingly unrelated construction introduced
by Bhargava and the first-named author in \cite{bhargava-satriano}:~a
functorial \emph{\bfseries Galois closure} operation for ring
extensions that commutes with arbitrary base change. This Galois
closure operation generalizes constructions appearing in work of
Grothendieck \cite[Chapter IV, Lemma 1]{Chevalley--1958} and
Katz--Mazur \cite[\S1.8.2]{Katz--Mazur--1985}; variants of the Galois
closure were used in Bhargava's groundbreaking work
\cite{BhargavaIII--2004,BhargavaIV--2008} and recently non-commutative
generalizations of the Galois closure were applied by Ho and the
first-named author \cite{Ho--Satriano-2020} to uniformly construct
many of the representations arising in arithmetic invariant theory,
including many Vinberg representations.

Before stating our main results, let us elucidate the connection
between Galois closures and Hilbert schemes of points. First, given
any morphism of rings $B\to A$ that realizes $A$ as a free $B$-module
of rank $n$, the Galois closure\footnote{also referred to as the
$S_n$-closure} $G(A/B)$ is a $B$-algebra which comes equipped with an
action of the symmetric group $S_n$. 
Furthermore, if $A$ is a quotient
of $B[x_1,\dots,x_r]$,
then $G(A/B)$ is naturally a quotient of $B[x_{i,j}]$, where $1\leq
i\leq r$ and $1\leq j\leq n$.  The ideal defining $G(A/B)$ contains
the linear form $\sum_j x_{i,j}$ for every $i$, thus, upon eliminating
the variables $x_{i,n}$, we see
\[
[A]\in\hilb^n(\bbA^r_\kk)\ \ \Longrightarrow\ \ [G(A/\kk)]\in\hilb^{pts}(\bbA^{r(n-1)}_\kk)
\]
where $\kk$ is a field. For example, $G(\kk^n/\kk)\simeq \kk^{n!}$ as a $\kk$-algebra, and more specifically, is given by the regular representation. In particular, the Galois closure operation takes configurations of $n$ distinct points in $\bbA^r_\kk$ to configurations of $n!$ distinct points in $\bbA^{r(n-1)}_\kk$.

Of notable interest for us is that $G(A/B)$ need not be a flat
$B$-module even though $A$ is flat. While this may seem unsightly at
first, it turns out to be a useful feature in the study of Hilbert
schemes; indeed, this gives us a method to take any easily understood
algebra $A$ lying on the main component of the Hilbert scheme and
produce a new algebra $G(A/\kk)$ which has the potential to lie off of
the main component. This is particularly useful when one focuses
attention on those algebras $A$ defined by monomial ideals $I$; here
$G(A/\kk)$ carries a rich combinatorial structure coming both from the
$S_n$-action and the fact that $I$ is monomial.

The algebras that featured prominently in \cite{bhargava-satriano},
and those which play the most important role in our current paper, are
\[
A_{n}:=\kk[x_1,\dots,x_{n}]/(x_1,\dots,x_{n})^2.
\]
Within Poonen's moduli space of rank $n$ rings \cite{Poonen--2008},
every algebra degenerates to $A_{n-1}$, and hence $G(A_{n-1}/\kk)$ has the
largest possible dimension among Galois closures, see \cite[Theorem
  8]{bhargava-satriano}. One of the main theorems of
\cite{bhargava-satriano}, see Theorem 27 of (loc.~cit.), gives the decomposition of 
$G(A_{n-1}/\kk)$ into irreducible $S_n$-representations, and in particular, gives a combinatorial formula for $d(n):=\dim_\kk G(A_{n-1}/\kk)$.

It follows from the proof of \cite[Theorem 5]{bhargava-satriano} that
if $A$ is a flat $B$-algebra of rank $n\leq3$, then $G(A/B)$ is a flat
$B$-algebra of rank $n!$. In particular, if $A$ is a $\kk$-algebra of
rank at most $3$ and $A$ lies on the main component of the Hilbert
scheme, then $G(A/\kk)$ does as well. Thus, to find Galois closures
living off of the main component, one must consider rings of rank
$n\geq4$.

Throughout this paper, we work over an algebraically closed field
$\kk$ of characteristic $0$. Our main theorem is:

\begin{theorem}\label{thm:main}
  If $n\geq 4$, then every irreducible component of
  $\hilb^{d(n)}(\bbA^{(n-1)^2})$ containing $G(A_{n-1}/\kk)$ is
  elementary.
\end{theorem}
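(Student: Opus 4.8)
The plan is to show that the algebra $G(A_n/\kk)$ is supported at a single point of $\bbA^{(n-1)^2}$, i.e.\ that it is local, and then to invoke the standard fact that a component of the Hilbert scheme of points is elementary precisely when it contains a local algebra whose generic member on that component remains supported at a point—equivalently, by upper-semicontinuity of the number of points in the support, every component through a local, \emph{curvilinear-free} point in a suitable sense is elementary. Concretely, I would argue as follows. The algebra $A_n = \kk[x_1,\dots,x_{n-1}]/(x_1,\dots,x_{n-1})^2$ is local with maximal ideal $\fmm_{A_n}=(x_1,\dots,x_{n-1})$, and its residue field is $\kk$. The Galois closure construction of \cite{bhargava-satriano} is functorial and compatible with base change; in particular it should commute with localization and with passage to residue fields, so $G(A_n/\kk)$ is built out of tensor powers of $A_n$ over $\kk$, cut down by the $S_n$-invariant relations. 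Since $A_n^{\otimes m}$ (over $\kk$) is again local with residue field $\kk$ for every $m$, and $G(A_n/\kk)$ is a quotient of an appropriate such construction, $G(A_n/\kk)$ is a finite local $\kk$-algebra with residue field $\kk$. Hence $[G(A_n/\kk)]$ corresponds to a length-$d(n)$ subscheme of $\bbA^{(n-1)^2}$ supported at a single (closed) point.

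Next I would upgrade this from ``$G(A_n/\kk)$ is local'' to ``every component through it is elementary.'' The key input is that being supported at one point is \emph{not} an open condition, so we cannot argue by genericity directly; instead I would use the structure of the punctual versus non-punctual loci. Write $Z\subseteq \hilb^{d(n)}(\bbA^{(n-1)^2})$ for an irreducible component containing $[G(A_n/\kk)]$. Let $U\subseteq Z$ be the open locus of subschemes whose support has the maximal possible cardinality among members of $Z$; by upper-semicontinuity of the degree of the reduced support (or by considering the Hilbert--Chow morphism $\hilb^{d(n)}(\bbA^{(n-1)^2})\to \operatorname{Sym}^{d(n)}(\bbA^{(n-1)^2})$), $U$ is nonempty and open in $Z$. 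If the generic support cardinality were $\geq 2$, then the generic point of $Z$ would factor as a product (étale-locally) of smaller-length schemes, and $Z$ would be dominated by a product of Hilbert schemes of smaller length via the addition/join map; I would then derive a contradiction with the fact that $[G(A_n/\kk)]$, being local of length $d(n)$, lies in the \emph{closure} of $U$ but the closure of the image of the join map only contains points whose support already splits—or more precisely, I would show the local locus of $Z$ has the expected codimension forcing the generic point to be local. The cleanest route is: the locus of $Z$ consisting of schemes \emph{not} supported at one point is open; if it is nonempty it is dense, so its closure is all of $Z$; but then $[G(A_n/\kk)]$ lies in the closure of the non-punctual locus, which by properness of Hilbert--Chow maps to the non-small-diagonal locus of $\operatorname{Sym}^{d(n)}$, contradicting that the Hilbert--Chow image of $[G(A_n/\kk)]$ is $d(n)$ times a single point, a closed point of the small diagonal that is not in the closure of the complement of the small diagonal. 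Hence the non-punctual locus of $Z$ is empty, i.e.\ $Z$ is elementary.

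The step I expect to be the main obstacle is the first one: proving rigorously that $G(A_n/\kk)$ is a \emph{local} ring. The Galois closure $G(A/B)$ is defined via a somewhat intricate quotient of $A^{\otimes_B n}$ by the ideal generated by the differences $e_i(a) - e_i^{(n)}(a\otimes 1 \otimes \cdots)$ of elementary-symmetric-type relations (equivalently by realizing $S_n$-invariants correctly), and it is not a priori obvious that this quotient is local even when $A$ is. I would handle this by: (a) reducing mod the maximal ideal, using base-change compatibility \cite{bhargava-satriano} to identify $G(A_n/\kk)\otimes_\kk (\kk\cdot\text{something})$—more usefully, identifying the residue ring structure—and (b) showing that every prime of $G(A_n/\kk)$ contracts to the unique prime of $A_n$ in a way that pins it down; concretely, $\Spec G(A_n/\kk)\to \Spec A_n$ has image a single point, and the fiber is $\Spec$ of $G(A_n/\kk)\otimes_{A_n}\kk = G(\kk/\kk)$-type object, which is Artinian local. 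Equivalently, since $A_n$ is local Artinian with residue field $\kk$ and $G(-/\kk)$ commutes with the base change $A_n \to \kk$, the geometric fiber and the total space have the same underlying topological space, which is a point. Once locality is established, the second part is standard and essentially formal, using only properness of the Hilbert--Chow morphism and the observation that the small diagonal of $\operatorname{Sym}^{d(n)}(\bbA^{(n-1)^2})$ is closed and disjoint from the closure of its complement along any point of the form $d(n)\cdot[p]$.
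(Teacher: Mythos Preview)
Your argument contains a genuine and fatal gap in the second step. You correctly observe that $G(A_n/\kk)$ is local (indeed, it is a graded Artinian quotient of a polynomial ring, hence has a unique maximal ideal), but you then try to deduce that every component through it is elementary from locality alone. This implication is false, and your proposed proof of it breaks at the following claim: that a point of the form $d(n)\cdot[p]$ on the small diagonal of $\operatorname{Sym}^{d(n)}(\bbA^{(n-1)^2})$ is ``disjoint from the closure of its complement.'' In fact the small diagonal is contained in the closure of its complement: $d$ distinct points can degenerate to $d$ copies of a single point. Concretely, in $\operatorname{Sym}^2(\bbA^1)$ the cycle $[0]+[t]$ specializes to $2\cdot[0]$ as $t\to 0$. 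So there is no contradiction to be had via Hilbert--Chow.

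To see that your overall strategy cannot work, note that it would apply verbatim to \emph{any} local Artinian algebra, for instance $\kk[x]/(x^d)\in\hilb^d(\bbA^1)$. But $\hilb^d(\bbA^1)\cong\bbA^d$ is irreducible with generic point a reduced length-$d$ scheme, so the unique component through $\kk[x]/(x^d)$ is the smoothable component, which is not elementary. More generally, every Artinian local algebra lies in the closure of the locus of non-punctual schemes on the smoothable component whenever it is smoothable; locality of a single point says nothing about the generic behaviour on a component through it.

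The paper's proof proceeds by an entirely different mechanism: it shows that $G(A_n/\kk)$ has \emph{trivial negative tangents}, i.e.\ $T^1(G(A_n/\kk)/\kk,G(A_n/\kk))_{<0}=0$, and then invokes \cite[Theorem~1.2]{Jelisiejew--2019}, which asserts that a graded Artinian algebra with trivial negative tangents lies only on elementary components. The vanishing of negative tangents is the real content of the theorem and occupies all of Section~\ref{sec:triviality-neg-tngts}; it requires a detailed analysis of syzygies among the generators of the defining ideal $I$ and is where the hypothesis $n\geq 4$ is used in an essential way.
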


In fact, Theorem \ref{thm:main} follows from the more general
Theorem \ref{thm:main-higher-gc}, which we now describe.\footnote{We wholeheartedly thank the anonymous referee for
noting that Theorem \ref{thm:main} should hold in greater generality.} By decoupling $\rank_B(A)$ from $n$, we define a
notion of \emph{higher Galois closure} $G^{(n)}(A/B)$, see Section
\ref{sub:gcs}; the Galois closure from \cite{bhargava-satriano} is
then given by $G(A/B)=G^{(\rank A)}(A/B)$. In our case of interest, we
show (see Lemma \ref{l:simplifying-the-gc-ideal}) that
\[
G^{(n)}(A_m/\kk)=\kk[x_{i,j}\mid 1\leq i\leq m,1\leq j\leq n] / I
\]
where
\begin{equation}\label{eqn:maineqnI}
  I = (e_1(x_i)\mid 1\leq i\le m) + \sum_{j=1}^n(x_{1,j},\dots,x_{m,j})^2
\end{equation}
and $e_1(x_i):=\sum_j x_{i,j}$. By eliminating $x_{i,n}$, we may naturally view $G^{(n)}(A_m/\kk)$ as a point in $\hilb^{pts}(\bbA^{m(n-1)})$. 

A graded $\kk$-algebra $R$ is said to have trivial negative tangents if the negatively graded pieces of the $T^1$-module
  $T^1(R/\kk,R)_{<0}$ vanish; see Section \ref{sub:tcc}
  for a brief review of the $T^i$-modules. In \cite[Theorem 1.2]{Jelisiejew--2019}, Jelisiejew proves that if $R$ has trivial negative tangents, then all components of the Hilbert scheme containing $[R]$ are elementary. 

In what follows, we work throughout over $\kk$ and so simply write $G^{(n)}(A_m)$ in place of $G^{(n)}(A_m/\kk)$. We prove:

\begin{theorem}\label{thm:main-higher-gc}
$G^{(n)}(A_m)$ has trivial negative tangents if and only if one of the following holds:
\begin{enumerate}
\item\label{neqal1} $n=1$,
\item $m=1$ and $n\leq2$,
\item\label{neq3mgeq3} $n=3$ and $m\geq3$,
\item\label{ngeq4mgeq2} $n\geq4$ and $m\geq2$.
\end{enumerate}
For all $(n,m)$ in \ref{neqal1}--\ref{ngeq4mgeq2} above, every irreducible component of
  $\hilb^{d(n,m)}(\bbA^{m(n-1)})$ containing $G^{(n)}(A_m)$ is
  elementary; see \eqref{eqn:dnm} for a combinatorial formula for $d(n,m)$.
\end{theorem}

\begin{remark}\label{rmk:main-n-equals3}
  In Theorem \ref{thm:n3}, we show that for $n=3$ and $m\geq11$, the
  higher Galois closure $G^{(3)}(A_m)$ lives on an irreducible locus
  $Z_m:=\bbA^{2m}\times\Gr(\binom{m}{2},m^2+m+1)$ whose dimension is
  larger than that of the main component of the Hilbert scheme.
  However, showing that $G^{(3)}(A_m)$ lives on an elementary
  component
  is more involved.  In fact, $Z_m$ is not generally equal to the full component
  containing $G^{(3)}(A_m)$ as one can check from a tangent space
  computation.
\end{remark}

Furthermore, we prove that when $n=3$, the higher Galois closure $G^{(3)}(A_m)$ always defines a smooth point of the Hilbert scheme. We do this by proving vanishing of the obstruction space for deformations. Combining Theorem \ref{thm:main-higher-gc} with Theorems 4.2 and 4.5 of \cite{Jelisiejew--2019}, we see that the obstruction lives in $T^2(G^{(n)}(A_m)/\kk,G^{(n)}(A_m))_{\geq0}$. We characterize precisely when this obstruction space vanishes.

\begin{theorem}\label{thm:VNOS}
Let $G^{(n)}(A_m)$ have trivial negative tangents and assume $G^{(n)}(A_m)\not\simeq\kk$, i.e., $(n,m)$ is as in Theorem \ref{thm:main-higher-gc} \ref{neq3mgeq3}--\ref{ngeq4mgeq2}. Then the obstruction space $T^2(G^{(n)}(A_m)/\kk,G^{(n)}(A_m))_{\geq0}$ vanishes if and only if
\begin{enumerate}[label=(\alph*)]
\item\label{vnos1} $n=3$ and $m\geq3$, or
\item\label{vnos2} $n=4$ and $2\leq m\leq3$.
\end{enumerate}
In particular, for all $(n,m)$ in \ref{vnos1}--\ref{vnos2} above, $G^{(n)}(A_m)$ is a smooth point of $\hilb^{d(n,m)}(\bbA^{m(n-1)})$ living on a unique elementary component.
\end{theorem}

\begin{remark}\label{rmk:vnos-intro}
  One may wonder whether $G^{(n)}(A_m)$ defines a singular point of
  the scheme $\hilb^{d(n,m)}(\bbA^{m(n-1)})$ for $(n,m)$ where the
  obstruction space is non-vanishing, i.e., for those $(n,m)$ not of
  the form \ref{vnos1}--\ref{vnos2}. This appears to be a highly
  subtle question. See Remark \ref{rmk:VNOS} for more details.
\end{remark}

Given the new elementary components we construct, one is particularly
interested in having a formula for $d(n,m):=\dim_\kk G^{(n)}(A_m)$,
and more generally, the Hilbert function of $G^{(n)}(A_m)$. We achieve
this in Theorem \ref{thm:structure-thm-higher-gc} which gives much more
refined information.

As mentioned above, one of the main theorems of \cite{bhargava-satriano} gives an explicit decomposition of $G(A_{n-1})$ into irreducible $S_n$-representations. We generalize this structure theorem to higher Galois closures. This result plays a key role in our proof of Theorem \ref{thm:VNOS} as well as our analysis of the socle of $G^{(n)}(A_m)$. In what follows, $V_\mu$ denotes the irreducible representation (i.e., the \emph{Specht module}) associated to a partition $\mu$, $K_{\mu\lambda}$ is the Kostka number \cite[Definition 2.11.1]{sagan}, and $\vartriangleright$ denotes dominance of partitions.

\begin{theorem}\label{thm:structure-thm-higher-gc}
For $m,n\geq1$, we have an isomorphism of $S_n$-representations
\[
G^{(n)}(A_m)\simeq\bigoplus_{\substack{\mu\vartriangleright\lambda\\ \mu_1=\lambda_1}}\kappa_\lambda K_{\mu\lambda}V_\mu,
\]
where $\lambda=(\lambda_1,\lambda_2,\dots)$ and $\mu=(\mu_1,\mu_2,\dots)$ are weakly decreasing sequences which run over partitions of $n$ into at most $\mathscr{M}:=\min(n,m+1)$ parts, and 
\[
\kappa_\lambda:=\binom{m}{k_0;\dots;k_{\mathscr{M}-1}}
\]
is the multinomial coefficient with $k_j:=\#\{i\neq1 \mid \lambda_i=j\}$ for $j>0$, and $k_0:=m-\sum_{j>0}k_j$. In particular,
\begin{equation}\label{eqn:dnm}
d(n,m)=\sum_{\substack{\mu\vartriangleright\lambda\\ \mu_1=\lambda_1}}\kappa_\lambda K_{\mu\lambda}\dim V_\mu
\end{equation}
and the Hilbert function of $G^{(n)}(A_m)$ is given by
\begin{equation}\label{eqn:hnm}
h_{G^{(n)}(A_m)}(i)=\sum_{\substack{\mu\vartriangleright\lambda\\ \mu_1=\lambda_1=n-i}}\kappa_\lambda K_{\mu\lambda}\dim V_\mu.
\end{equation}
\end{theorem}

In an earlier paper \cite{Satriano--Staal--2023}, we found 
families of small dimensional elementary components of Hilbert schemes
of points in $\bbA^n$, answering a question of Iarrobino.  One of the
new insights in that paper was that we were able to produce secondary
families of elementary components by enlarging our ideals by socle
elements. In this current paper, we show that this socle phenomenon is not
an isolated occurrance, and indeed holds for the families we construct
in Theorem \ref{thm:main-higher-gc}.  Specifically, we prove that if
$s_1,\dots,s_r\in G^{(n)}(A_m)$ are socle elements that avoid specific
multidegrees, then the quotient $G^{(n)}(A_m)/(s_1,\dots,s_r)$ also
lives on an elementary component of $\hilb^{pts}(\bbA^{m(n-1)})$.

In order to state this result precisely, we recall from \eqref{eqn:maineqnI} that $G^{(n)}(A_m)$ carries a $\ZZ^m$-grading (and hence a $\ZZ$-grading), where $x_{i,j}$ has multidegree $(0,\dots,0,1,0,\dots,0)$ with the $1$ in the $i$th position. Our next main result is the following:

\begin{theorem}\label{thm:main-higher-gc-socle}
Let $n \geq 4$ and $m \geq 2$. Choose $s_1, \dots, s_r$ linearly independent homogeneous socle elements of minimal degree $D$, and assume $D \geq 3$. If $D \in \{\lfloor\frac{n}{2}\rfloor, \lceil\frac{n}{2}\rceil\}$, then let $s_i$ have multidegree $d_i := (d_{i,1},...,d_{i,m})$ and assume each $d_{i,j}\leq D-2$. If
\[
B:=G^{(n)}(A_m)/(s_1,\dots,s_r)
\]
has no socle elements of degree strictly less than $D$, then $B$ has trivial negative tangents. In particular, every irreducible component of $\hilb^{d(n,m)-r}(\bbA^{m(n-1)})$ containing $B$ is elementary.

Furthermore, if $m\geq n-1$ and either:
\begin{enumerate}
\item\label{socleeven} $n$ is even, or
\item\label{socleodd} $n$ is odd and $B$ has no socle elements of degree $D-1$,
\end{enumerate}
then $B$ automatically has no socle elements of degree strictly less than $D$, and hence has trivial negative tangents.
\end{theorem}

\begin{remark}
The condition that $B$ have no socle elemens in degree $D-1$ in Theorem \ref{thm:main-higher-gc-socle}\ref{socleodd} is necessary, see Examples \ref{eg:n-equals-5} and \ref{eg:n-equals-7}. This pair of examples shows that this condition 
is sometimes satisfied and sometimes not.
\end{remark}

In Theorem \ref{thm:socles-gc}, we analyze the $S_n$-representation structure of the socle of $G^{(n)}(A_m)$. Combining this analysis with Theorem \ref{thm:main-higher-gc-socle}\ref{socleeven}, we give a combinatorial formula for the range of values that $r$ may assume. This formula is given in Corollary \ref{cor:main-higher-gc-socle} below.

Let $n$ be even and let $\cP$ be the set of partitions $\lambda$ of $n$ satisfying the
following constraints:~$\lambda=(\lambda_1,\dots,\lambda_\ell)$ with
$\lambda_1=\frac{n}{2}$,
$\lambda_2\leq\frac{n}{2}-2$, and
$\lambda_1\geq\lambda_2\geq\dots\geq\lambda_\ell>0$.
Let $C_k$ denote the $k$th Catalan number and let
\[
R(n):=C_{n/2}\sum_{\lambda\in\cP}\kappa_\lambda 
\]
We then have:
\begin{corollary}\label{cor:main-higher-gc-socle}
  Let $n\geq6$ be even and let $m\geq n-1$. For all $0\leq r\leq R(n)$, there exist socle elements $s_1,\dots,s_r\in G^{(n)}(A_m)$ such that $B:=G^{(n)}(A_m)/(s_1,\dots,s_r)$ has trivial negative tangents. In particular, every irreducible component of $\hilb^{d(n,m)-r}(\bbA^{m(n-1)})$ containing $B$ is elementary.
\end{corollary}

Combining our formula \eqref{eqn:hnm} for the Hilbert function of
$G^{(n)}(A_m)$ with Theorem \ref{thm:main-higher-gc-socle} and Corollary
\ref{cor:main-higher-gc-socle}, we obtain the following tables (for small $n,m$) recording the Hilbert functions of our algebras $G^{(n)}(A_m)/(s_1,\dots,s_r)$ with trivial negative tangents. Table \ref{hf} is generated from Theorem \ref{thm:main-higher-gc-socle} and computer calculations, and Table \ref{hfcor19} is an immediate consequence of Corollary \ref{cor:main-higher-gc-socle}.

\begin{table}[hb]
  \caption{Hilbert Functions of $G^{(n)}(A_m) / (s_1,\dotsc,s_r)$
    with trivial negative tangents for small $n,m$ \label{hf}}
  \centering
  \resizebox{\columnwidth}{!}{%
  {\renewcommand{\arraystretch}{1.5}
  \begin{tabular}{|c||l|l|l|l|}
    \hline    
    $n$ & $G^{(n)}(A_2)$ & $G^{(n)}(A_3)$ & $G^{(n)}(A_4)$ &
    $G^{(n)}(A_5)$ \\ \hline\hline 
    $3$ & N/A & $(1,6,3)$ & $(1,8,6-r)$, $0\le r\le 2$ &
    $(1,10,10-r)$, $0\le r\le 5$ \\ \hline
    $4$ & $(1,6,9-r)$, $0\le r\le 5$ & $(1,9,21-r,1)$, $0\le r\le 12$
    & $(1,12,38-r,4)$, $0\le r\le 20$ & $(1,15,60-r,10)$, $0\le r\le
    30$ \\ \hline
    $5$ & $(1,8,21,10-r)$, $0\le r\le 6$ & $(1,12,48,44-r)$, $0\le
    r\le 27$ & $(1,16,86,116-r,1)$, $0\le r\le 40$ &
    $(1,20,135,240-r,5)$, $0\le r \le 100$ \\ \hline
  \end{tabular}}
  }
\end{table}

\begin{table}[hb]
  \caption{Examples of Hilbert functions of $G^{(n)}(A_m) / (s_1,\dotsc,s_r)$
    with trivial negative tangents derived from Corollary \ref{cor:main-higher-gc-socle}\label{hfcor19}}
  \centering
  \resizebox{\columnwidth}{!}{%
  {\renewcommand{\arraystretch}{1.5}
  \begin{tabular}{|c||l|l|l|l|}
    \hline    
    $n$ & $G^{(n)}(A_{n-1})$ & $G^{(n)}(A_{n})$ & $G^{(n)}(A_{n+1})$ 
 \\ \hline\hline 
    $6$ 
    & $(1, 25, 235, 915-r, 680, 1)$, $0\leq r\leq 50$ 
    & $(1, 30, 339, 1600-r, 1545, 6)$, $0\leq r\leq 100$ 
    & $(1, 35, 462, 2562-r, 3045, 21)$, $0\leq r\leq 175$ 
     \\ \hline
    $8$ 
    & $(1, 49, 1001, 10745, 60501-r, 128457, 26173, 1)$, $0\leq r\leq 2254$ 
    & $(1, 56, 1308, 16072, 104006-r, 259224, 67396, 8)$, $0\leq r\leq 3724$ 
    & $(1, 63, 1656, 22920, 167580-r, 479430, 152124, 36)$, $0\leq r\leq 5796$ 
     \\ \hline
  \end{tabular}}
  }
\end{table}


\begin{remark}
  Observe that the cases $(n,m)=(3,m)$, for
  $3\le m\le 5$, and $(n,m) = (4,2)$ all define algebras of nilpotency
  class $2$, in the sense of \cite{Shafarevich--1990}. We therefore recover Shafarevich's 
examples with Hilbert functions $(1,d,e)$, for $3\le e\le  \frac{(d-1)(d-2)}{6}+2$. 
Note that our examples $(1,6,9-r)$, for $0\le r\le 3$ 
 provide new data for
  Shafarevich's unexplored ``middle interval'' $\frac{(d-1)(d-2)}{6}+2
  < e < \frac{d^2-1}{3}$ in the specific case $d=6$ (see
  \cite[p.179]{Shafarevich--1990}). 
\end{remark}

Notice that $G^{(4)}(A_3)$ modulo all of its socle elements of degree
$2$ is Gorenstein local with Hilbert function $(1,9,9,1)$, and thus
recovers an elementary component found by Iarrobino and Kanev.  That
is, this algebra is compressed (see \cite{Iarrobino--1984},
\cite[Section 3.3]{Iarrobino--Emsalem--1978}), and is generic
nonsmoothable by \cite[Lemma 6.21]{Iarrobino--Kanev--1999}, meaning it
sits in the Hilbert scheme on a single component consisting
generically of Gorenstein algebras with the same Hilbert function.

\vspace{1.2em}

In light of Theorem \ref{thm:main-higher-gc}, we end the introduction
by posing the following general question.

\begin{question}\label{q:higher-gc-TNT}
  For which $n$ and which finite rank $\kk$-algebras $A$ does the
  higher Galois closure $G^{(n)}(A/\kk)$ have trivial negative tangents, or more generally, lie on an elementary component?
\end{question}
 For small $n$, Table \ref{table} (in Section \ref{sec:last}) lists those $G^{(n)}(A/\kk)$ with trivial negative tangents for each isomorphism class of $\kk$-algebras of rank at most $m$. This makes use of Poonen's work \cite{Poonen--2008--iso}, which 
lists representatives of every such isomorphism class. 
It would be interesting to answer Question \ref{q:higher-gc-TNT} even for these finitely many classes.  

\subsection*{Acknowledgments}
It is a pleasure to thank Joachim Jelisiejew for comments on an
earlier draft of this paper. We are deeply indebted to the anonymous referee whose suggestions both shortened our arguments and generalized our results. The referee greatly simplified our proofs of Propositions 
\ref{prop:degneg1-second-syzygy-generaln} and \ref{prop:GnTNT-evenImpliesOdd}, and noted that our results should extend to the case when $m\neq n-1$.

\section{Preliminaries}\label{sec:preliminaries}

\subsection{Review of the Truncated Cotangent Complex}
\label{sub:tcc}

We prove Theorems \ref{thm:main}, \ref{thm:main-higher-gc}, and \ref{thm:main-higher-gc-socle} by showing that
our algebras of interest have trivial negative tangents. In this
section, we give a brief review of the truncated cotangent complex and
the $T^i$-modules. For further details, we refer the reader to
\cite[\S3]{Hartshorne--2010} or
\cite{Lichtenbaum--Schlessinger--1967}.

Given any morphism $B\to A$ of rings, we obtain a model of the
truncated cotangent complex $L_{A/B,\bullet}$ as follows. Let
$\pi\colon R\to A$ be a surjective map from a (possibly infinite type)
polynomial ring $R$ over $B$. Let $I=\ker(\pi)$, $F$ be a (possibly
infinite rank) free $R$-module, and $\pi'\colon F\to I$ be a
surjective $R$-module map. Let $Q=\ker(\pi')$ and $\Kos\subseteq Q$ be
the submodule of Koszul relations. The \emph{\bfseries truncated
cotangent complex} of $A$ over $B$ is the following $3$-term complex
concentrated in homological degrees $0,1,2$:
\[
L_{A/B,\bullet} \colon \Omega^1_{R/B} \otimes_{R} A
\stackrel{\ d_1}{\longleftarrow} F \otimes_{R} A
\stackrel{\ d_2}{\longleftarrow} Q/\Kos;
\]
the map $d_2$ is induced by the inclusion $Q \subseteq F$ and $d_1$ is
given by composing the map $F \otimes_{R} A \to I/I^2$ with the map
induced by the derivation $R \to \Omega^1_{R/B}$. One then defines the
\emph{\bfseries $T^i$-modules} to be
\[
T^i(A/B, M) := H^i(\Hom_A(L_{A/B,\bullet}, M)),
\]
for any $A$-module $M$ and $0 \le i \le 2$. Different choices of $R$,
$F$, $\pi$, and $\pi'$ yield quasi-isomorphic elements
$L_{A/B,\bullet}$ of the derived category, see e.g.\ \cite[Remark
  3.3.1]{Hartshorne--2010}; as a result, the $T^i$-modules depend only
on $B\to A$ and $M$. Furthermore, when $A$ and $B$ carry gradings by
an abelian group and $B\to A$ is a graded morphism, then the choices
of $R$, $F$, $\pi$, and $\pi'$ can be made to respect the grading. As
a result, if $M$ is a graded $A$-module, then $T^i(A/B,M)$ is also
graded and the nine-term long exact sequences given in \cite[Theorems
  3.4--3.5]{Hartshorne--2010} are graded morphisms.

Lastly, if $\kk$ is a field, then a (positively) $\ZZ$-graded
$\kk$-algebra $A$ is said to have \emph{\bfseries trivial negative
tangents} if the negatively graded pieces $T^1(A/\kk, A)_{<0}$ vanish. Our
primary case of interest is where $A=S/I$ with $S=\kk[x_1,\dots,x_n]$,
$I$ is a homogeneous ideal, and each variable has degree $1$. Then
differentiation by $x_i$ yields an $S$-module map $\partial_i\colon
I\to A$. In this case, $A$ has trivial negative tangents if and only
if every negatively graded $S$-module map $\varphi\colon I\to A$ is a
$\kk$-linear combination of $\partial_1, \dotsc, \partial_n$.

\subsection{Galois closures and higher Galois closures}
\label{sub:gcs}

We briefly review the Galois closure operation introduced in
\cite{bhargava-satriano} and then introduce our \emph{higher Galois closure} construction. Let $A$ be a (commutative) $B$-algebra which
is free of rank $n<\infty$ as a $B$-module. (The Galois closure is, in
fact, defined whenever $A$ is locally free of rank $n$, however we do
not need this level of generality here.) For $a\in A$ and $1\leq i\leq
n$, let $a^{(i)}=1\otimes\dotsb\otimes 1\otimes
a\otimes1\otimes\dotsb\otimes1\in A^{\otimes n}$, where the $a$ is in
the $i$th position and the tensor product is taken over $B$. The
intuition behind the Galois closure is to treat the elements
$a^{(1)},\dotsc,a^{(n)}$ as Galois conjugates. In particular, the
elementary symmetric functions in the $a^{(i)}$ should be expressible
as coefficients of an appropriate characteristic polynomial, as we now
describe.

For every $a\in A$, we obtain a $B$-linear map $m_a\colon A\to A$
given by multiplication by $a$. Let $p_a(t)\in B[t]$ be the
characteristic polynomial of $m_a$. Write $p_a(t)=\sum_{i=0}^n
(-1)^is_i(a)t^{n-i}$ and let $e_j(a)$ be the $j$th elementary
symmetric function in $a^{(1)},\dots,a^{(n)}$. Then
\[
G(A/B):=A^{\otimes n}/I,\quad\textrm{where}\quad
I=\llrr{s_j(a)-e_j(a)\mid a\in A}.
\]
The $S_n$-action on $A^{\otimes n}$ given by permuting tensor factors
descends to an action on $G(A/B)$. If $a_1,\dots,a_n$ is a basis for
$A$ as a $B$-module, then $I$ is generated by the expressions
$s_j(a_i)-e_j(a_i)$ for $1\leq i,j\leq n$, see
\cite[\S2]{bhargava-satriano}.

We define the higher Galois closure by allowing $n$ to be independent of $\rank_B(A)$. 
We simply let
\[
G^{(n)}(A/B):=A^{\otimes n}/I,\quad\textrm{where}\quad
I=\llrr{s_j(a)-e_j(a)\mid a\in A}.
\]
Specializing to our case of interest, let
$A_m=\kk[x_1,\dots,x_m]/(x_1,\dots,x_m)^2$ where $\kk$ is a
characteristic $0$ field. Let $x_{i,j}:=x_i^{(j)}$, and for any linear form $g=\sum_{i=1}^m\lambda_i x_i$ with $\lambda_i\in\kk$, let $g_j:=\sum_{i=1}^m\lambda_i x_{i,j}$ and $e_\ell(g):=e_\ell(g_1,\dots,g_n)$ denote the $\ell$th elementary symmetric function. We see then that
\begin{equation}\label{eqn:gc-def}
G^{(n)}(A_m/\kk)=\kk[x_{i,j}\mid 1\leq i\leq m,1\leq j\leq n]/I
\end{equation}
with
\[
I=\sum_{j=1}^n(x_{1,j},\dots,x_{m,j})^2+\llrr{e_1(g),\dots,e_n(g)}
\]
where $g$ runs through all linear forms in the $x_i$. In this notation, the $S_n$-action on
$G^{(n)}(A_m/\kk)$ is given by $\sigma(x_{i,j})=x_{i,\sigma(j)}$.

\begin{lemma}\label{l:simplifying-the-gc-ideal}
We have
\[
I=\bigl(e_1(x_i)\mid 1\leq i\leq m\bigr)+\sum_{j=1}^n(x_{1,j},\dots,x_{m,j})^2.
\]
\end{lemma}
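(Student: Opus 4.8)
The plan is to show that the elementary symmetric functions $e_2(x_i), \dots, e_n(x_i)$ already lie in the ideal $J := (e_1(x_i) \mid 1 \le i < n) + \sum_{j=1}^n (x_{1,j}, \dots, x_{n-1,j})^2$, so that the two ideals coincide. The key observation is that everything happens ``one index $i$ at a time'': for a fixed $i$, the relations we care about only involve the $n$ variables $x_{i,1}, \dots, x_{i,n}$, and the square-of-the-maximal-ideal terms from the second summand supply, in particular, all products $x_{i,j} x_{i,k}$ with $j \ne k$ (taking both linear factors from the $i$th block) as well as all squares $x_{i,j}^2$. So modulo the second summand of $J$, each $e_\ell(x_i)$ for $\ell \ge 2$ is a sum of squarefree degree-$\ell$ monomials in $x_{i,1}, \dots, x_{i,n}$, and every monomial of degree $\ge 2$ in these variables is $0$. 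Hence $e_\ell(x_i) \equiv 0$ modulo the second summand alone when $\ell \ge 2$, which gives the containment $I \subseteq J$; the reverse containment $J \subseteq I$ is immediate since $e_1(x_i)$ and the quadratic terms are among the generators of $I$.

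First I would fix $i$ and work in the subring $\kk[x_{i,1}, \dots, x_{i,n}]$, noting that $e_\ell(x_i) = \sum_{j_1 < \dots < j_\ell} x_{i,j_1} \cdots x_{i,j_\ell}$ for $\ell \ge 1$. Second, I would observe that each summand $x_{i,j_1} \cdots x_{i,j_\ell}$ with $\ell \ge 2$ is a multiple of $x_{i,j_1} x_{i,j_2}$, and $x_{i,j_1} x_{i,j_2} \in (x_{1,j_1}, \dots, x_{n-1,j_1})^2$ because both $x_{i,j_1}$ and $x_{i,j_2}$... wait, $x_{i,j_2}$ sits in a different block $j_2$. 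Let me restate: $x_{i,j_1} x_{i,j_2}$ with $j_1 \ne j_2$ is not obviously in a single $(x_{1,j}, \dots, x_{n-1,j})^2$. The correct point is subtler: the generators of $I$ include $x_{i,j}^2$ and $x_{i,j} x_{k,j}$ (same $j$, from the second summand), but a product $x_{i,j_1} x_{i,j_2}$ with $j_1 \ne j_2$ is genuinely a new relation — and indeed it should come from combining $e_1(x_i)$ with the quadratic relations. So the argument must use $e_1(x_i)$: multiplying $e_1(x_i) = \sum_j x_{i,j}$ by $x_{i,k}$ gives $\sum_j x_{i,j} x_{i,k} \equiv 0$ modulo $J$, and since $x_{i,k}^2 \in J$ and $x_{i,j} x_{i,k}$ for $j \ne k$ is what we want, a downward induction (or a direct linear-algebra argument on the span of the $\binom{n}{2}$ products $x_{i,j} x_{i,k}$, $j < k$) shows all such products lie in $J$. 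Once all degree-$2$ monomials in the $i$th block are in $J$, all higher $e_\ell(x_i)$ follow, and we conclude $I = J$.

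The main obstacle is precisely this step: proving that $x_{i,j} x_{i,k} \in J$ for all $j \ne k$. One clean way is to fix $i$ and consider the $\kk$-span $V$ of $\{x_{i,j} x_{i,k} : j < k\}$ inside the degree-$2$ part; the relations $x_{i,k} \cdot e_1(x_i) \in I$ for $k = 1, \dots, n$ give $n$ linear relations among these $\binom{n}{2}$ products modulo the squares (which are already in $J$), but that is not enough relations to kill all of $V$ by itself. Instead one should use the full strength of the second summand together with the $S_n$-symmetry and the $e_1$-relations, or — more likely what the authors intend — simply observe that in the quotient ring $G(A_n/\kk)$ we have $x_i x_j = 0$ in $A_n$ for the original variables, and the defining relations $s_\ell(x_i) = e_\ell(x_i)$ together with $s_1(x_i) = e_1(x_i)$ and the structure of $A_n$ (where $m_{x_i}$ has characteristic polynomial $t^n$, so $s_\ell(x_i) = 0$ for all $\ell$) force $e_\ell(x_i) = 0$ in $G(A_n/\kk)$ for all $\ell$; hence $e_\ell(x_i) \in I$, but we want $e_\ell(x_i) \in J$, and since $e_1(x_i) \in J$ we reduce to showing $e_2(x_i) \in J$, which then propagates. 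I would look to the cited \cite[\S2]{bhargava-satriano} for the precise form of these relations and present the shortest path; I expect the verification reduces to a short monomial bookkeeping once the right generating set is in hand.
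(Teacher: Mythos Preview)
Your plan hinges on showing that every product $x_{i,j}x_{i,k}$ with $j\neq k$ lies in $J$, and this is simply false. For instance, when $n=4$ the multi-degree $(2,0,0)$ piece of $G(A_4)=\mathscr{R}/I$ is the Specht module $V_{(2,2)}$, which is $2$-dimensional; since the lemma asserts $I=J$, a nonzero element such as $x_{1,1}x_{1,2}$ in $G(A_4)$ witnesses $x_{1,1}x_{1,2}\notin J$. The relations you obtain by multiplying $e_1(x_i)$ by $x_{i,k}$ only give you the \emph{sums} $\sum_{j\neq k}x_{i,j}x_{i,k}\in J$, and no amount of ``downward induction'' or $S_n$-symmetry will separate out the individual summands, because they are genuinely nonzero in the quotient.

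The paper's proof bypasses this entirely. Rather than attacking the monomials in $e_\ell(x_i)$, it observes that the power sums $P_\ell(x_i)=\sum_j x_{i,j}^\ell$ all lie in $J$: for $\ell=1$ this is $e_1(x_i)$, and for $\ell\geq2$ each summand $x_{i,j}^\ell$ is a multiple of $x_{i,j}^2\in(x_{1,j},\dots,x_{n-1,j})^2$. The Newton--Girard identities then express each $e_k(x_i)$ as a polynomial with rational coefficients and zero constant term in $P_1(x_i),\dots,P_k(x_i)$; since $J$ is an ideal and $\mathbb{Q}\subset\kk$, this gives $e_k(x_i)\in J$ immediately. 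You were close to this at one point (your computation essentially recovers $e_1(x_i)^2-P_2(x_i)=2e_2(x_i)\in J$), but you never let go of the false target $x_{i,j}x_{i,k}\in J$ and never articulated how the argument ``propagates'' to $e_3,e_4,\dots$; Newton--Girard is exactly the missing mechanism.
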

\begin{proof}
We prove that $I=\llrr{e_1(g)\mid g\textrm{\ linear\ form\ in\ the\ } x_i}+\sum_{j=1}^n(x_{1,j},\dots,x_{m,j})^2$. The result follows since $\llrr{e_1(g)\mid g\textrm{\ linear\ form\ in\ the\ } x_i}=(e_1(x_i)\mid 1\leq i\leq m)$.

We first note that all power sums
$P_\ell(g):=P_\ell(g_1,\dots,g_n):=\sum_j g_j^\ell$ are contained in
$I$; indeed, $P_1(g)$ is a linear combination of the $e_1(x_i)$ which are in $I$; 
for $\ell>1$, the power sum $P_\ell(g)$ is in the
ideal generated by the $g_j^2$, which is contained in $(x_{1,j},\dots,x_{m,j})^2$. The Newton--Girard identities then
express each $e_k(g)$ as a $\QQ$-linear
combination of the $P_\ell(g)$. Since
$\QQ\subset\kk$, we have finished the proof.
\end{proof}

\section{Structure theorem for higher Galois closures: Theorem \ref{thm:structure-thm-higher-gc}}
\label{sec:highergcstructure}

In this section, we prove Theorem \ref{thm:structure-thm-higher-gc} which generalizes one of the main results of \cite{bhargava-satriano} to higher Galois closures. This will play an important role in Sections \ref{sec:socles-gc} and \ref{sec:vnos}, where we analyze the socle and obstruction space of $G^{(n)}(A_m)$.

The following lemma will be useful throughout this section as well as Section \ref{sec:socles-gc}.

\begin{lemma}
\label{l:multidegleqnparts}
Let $m\leq m'$, $d=(d_1,\dots,d_m)\in\NN^m$, and $d'=(d_1,\dots,d_m,0,\dots,0)\in\NN^{m'}$. Then the natural projection map on multigraded pieces
\[
G^{(n)}(A_{m'})_{d'}\stackrel{\simeq}{\longrightarrow} G^{(n)}(A_m)_{d}
\]
is an isomorphism of $S_n$-representations.
\end{lemma}
\begin{proof}
Letting $I_{n,m}\subset A_m^{\otimes n}$ denote the defining ideal for $G^{(n)}(A_m)$, simply observe that we have a commutative diagram
\[
\xymatrix{
0\ar[r] & (I_{n,m'})_{d'}\ar[r]\ar[d]_-{\simeq} & (A_{m'}^{\otimes n})_{d'}\ar[r]\ar[d]_-{\simeq} & G^{(n)}(A_{m'})_{d'} \ar[d]\ar[r] & 0\\
0\ar[r] & (I_{n,m})_{d}\ar[r] & (A_m^{\otimes n})_{d}\ar[r] & G^{(n)}(A_m)_{d}\ar[r] & 0
}
\]
where the rows are exact and the leftmost two vertical maps are isomorphisms of $S_n$-representations.
\end{proof}

\begin{lemma}
\label{l:lemma31}
Let $d=(d_1,\dots,d_m)\in\NN^m$.
\begin{enumerate}
\item\label{lemma31part} If $n-\sum_i d_i < d_j$ for some $j$, then $G^{(n)}(A_m)_d=0$.
\item\label{supportSizeLeqn-1} If $|\{i\mid d_i\neq0\}|\geq n$, then $G^{(n)}(A_m)_d=0$.
\end{enumerate}
\end{lemma}
\begin{proof}
Lemma 31 of \cite{bhargava-satriano} says that \ref{lemma31part} holds when $m=n-1$, however the proof applies verbatim when $m$ is arbitrary:~the proof uses an inclusion-exclusion argument on indices $j$, which is applicable in our case as $x_{i,j}x_{k,j}=0$.

For \ref{supportSizeLeqn-1}, we first notice that if $|\{i\mid d_i\neq0\}|>n$, then $G^{(n)}(A_m)_d=0$ by the pigeonhole principle:~for every monomial $g\in G^{(n)}(A_m)_d$, there exists $j$ and $i\neq k$ such that $x_{i,j}x_{k,j}$ divides $g$. We may therefore assume $|\{i\mid d_i\neq0\}|=n$. By \ref{lemma31part}, after reordering variables, we know $d=(1,\dots,1,0,\dots,0)$. Then applying Lemma \ref{l:multidegleqnparts}, we see $G^{(n)}(A_m)_d\simeq G^{(n)}(A_n)_{(1,\dots,1)}$ and the latter vanishes again by \ref{lemma31part}.
\end{proof}

We now turn to Theorem \ref{thm:structure-thm-higher-gc}.

\begin{proof}[{Proof of Theorem \ref{thm:structure-thm-higher-gc}}]
We have a decomposition
\[
G^{(n)}(A_m)=\bigoplus_{d\in\ZZ^m}G^{(n)}(A_m)_d
\]
as $S_n$-representations. To determine $G^{(n)}(A_m)_d$ as a representation, we may reorder variables in order to assume $d=(d_1,\dotsc,d_m)$ with $n-\sum_id_i\geq d_1\geq\dotsb\geq d_m$. If $m\leq n-1$, then let $d'=(d_1,\dotsc,d_m,0,\dots,0)\in\NN^{n-1}$ and, applying Lemma \ref{l:multidegleqnparts}, we have an isomorphism
\[
G^{(n)}(A_{n-1})_{d'}\stackrel{\simeq}{\longrightarrow} G^{(n)}(A_m)_d.
\]
If $m\geq n$, then by Lemma \ref{l:lemma31}\ref{supportSizeLeqn-1}, we know $d_i=0$ for $i\geq n$ and so we may identify $d$ with $d'=(d_1,\dots,d_{n-1})\in\NN^{n-1}$. Then again applying Lemma \ref{l:multidegleqnparts}, we have an isomorphism
\[
G^{(n)}(A_m)_d\stackrel{\simeq}{\longrightarrow} G^{(n)}(A_{n-1})_{d'}.
\]
Let $\lambda$ be the (ordered) partition of $n$ given by $(n-\sum_i d_i, d'_1, \dots,d'_{n-1})$; we refer to $\lambda$ as the partition associated to $d$. Theorem 27 of \cite{bhargava-satriano} shows
\[
G^{(n)}(A_{n-1})_{d'}=\bigoplus_{\substack{\mu\vartriangleright\lambda\\ \mu_1=\lambda_1}}K_{\mu\lambda}V_\mu.
\]
Note that after removing zero entries from $\lambda$, the result is a
partition of $n$ into at most $\mathscr{M}:=\min(n,m+1)$
parts. Lastly, for any such partition $\lambda$, the number of
$d\in\ZZ^m$ associated to $\lambda$ is given by the multinomial
coefficient $\binom{m}{k_0;\dots;k_{n-1}}$ where $k_j:=\#\{i\neq1 \mid
\lambda_i=j\}$ for $j>0$, and $k_0:=m-\sum_{j>0}k_j$.
\end{proof}

\section{Trivial negative tangents for Galois closures}
\label{sec:triviality-neg-tngts}

We use the notation from equation \eqref{eqn:gc-def} where $I$
is given as in Lemma \ref{l:simplifying-the-gc-ideal}. We give
$G(A_n)$ the $\ZZ^m$-grading where $x_{i,j}$ has degree
$(0,\dots,0,1,0,\dots,0)$ with $1$ in the $i$th place. This induces
the $\ZZ$-grading where every $x_{i,j}$ has degree $1$.

Throughout the rest of this section, we
let $\mathscr{R}=\kk[x_{i,j}\mid 1\leq i\leq m,\,1\leq j\leq n]$ and fix a
graded $\mathscr{R}$-module map
\[
\varphi\colon I\to G^{(n)}(A_m)
\]
of negative degree. The primary goal of this section is to prove
$G^{(n)}(A_m)$ has trivial negative tangents. To do so, we must show that
$\varphi$ is a $\kk$-linear combination of the maps $\partial_{i,j}$
given by differentiation by $x_{i,j}$.

We prove Theorem \ref{thm:main-higher-gc} by first showing the result for $n\geq4$ even. We then reduce the case where $n$ is odd to the case $n-1$. Since $G^{(2)}(A_m)$ does not generally have trivial negative tangents, we must handle the case $n=3$ separately.

\subsection{Tangents of degree at most minus two}
\label{subsec:degnegleqmin2}


\begin{proposition}\label{prop:no-degleqneg2-generalcase}
  Let $n,m\geq2$. If $\varphi\colon I\to G^{(n)}(A_m)$ has degree at most $-2$, then
  $\varphi=0$.
\end{proposition}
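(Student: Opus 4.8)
The plan is to exploit the structure of the generators of $I$ from Lemma \ref{l:simplifying-the-gc-ideal}, namely the linear forms $e_1(x_i)$ and the quadratic monomials $x_{i,j}x_{i',j}$ (products of variables sharing the same tensor index $j$), together with the fact that $\varphi$ lowers total degree by at least $2$. First I would observe that since $\varphi$ has degree $\le -2$, it must kill all the linear generators $e_1(x_i)$: indeed $\varphi(e_1(x_i))$ would have degree $\le -1$, but $G(A)$ is nonnegatively graded, so $\varphi(e_1(x_i)) = 0$ for every $i$. Thus $\varphi$ is really a map out of the degree-$\ge 2$ part of $I$, and it suffices to understand $\varphi$ on the quadratic generators $x_{i,j}x_{i',j}$ and on the $\mathscr{R}$-module relations among all generators.

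The key computation is then to show $\varphi$ vanishes on each quadratic generator $g = x_{i,j}x_{i',j}$. Here $\varphi(g)$ lands in degree $\le 0$, hence is a scalar (if $\le -1$ it is $0$; degree exactly $0$ it lies in $G(A)_0 = \kk$). To pin it down I would use the $\mathscr{R}$-linear relations: for instance $x_{i',j}\cdot (x_{i,j} x_{i'',j}) = x_{i,j}\cdot(x_{i',j} x_{i'',j}) = x_{i'',j}\cdot(x_{i,j}x_{i',j})$ inside $\mathscr{R}$ (or, when indices coincide, syzygies like $x_{k,j}\cdot(x_{i,j}x_{i',j}) \in I$ being expressible two ways), which force $x_{i'',j}\varphi(g)$ and similar products to agree; since $\varphi(g)$ is a scalar, multiplying by a variable kills it, so all these relations are automatically satisfied and give no contradiction — this tells me I instead need to use a relation that expresses the scalar $\varphi(g)$ as something manifestly zero. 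The cleaner route: multiply $g$ by a suitable element of $\mathscr{R}$ to land back in $I$ in a way that evaluates $\varphi(g)$ against a unit. Concretely, since $\varphi(g)\in\kk$ and $\varphi$ is $\mathscr{R}$-linear, for any $f\in\mathscr{R}$ we get $\varphi(fg) = f\cdot\varphi(g) = \bar f\,\varphi(g)$ where $\bar f$ is the image of $f$ in $G(A)$; choosing $f$ so that $fg\in I$ is itself a combination of generators on which $\varphi$ is already known (e.g. linear generators times something, which $\varphi$ kills) will force $\varphi(g)=0$. The relation $e_1(x_{i'}) \cdot x_{i,j} - x_{i',j}x_{i,j} \in (\text{quadratic monomials } x_{i',j'}x_{i,j})$ is the type of identity I would leverage: applying $\varphi$, the left term is $0$ (linear generator killed), so $\varphi$ of the quadratic monomial equals $\varphi$ of a sum of other quadratic monomials in the variable $x_{i,j}$, and iterating/summing over $j$ using $\sum_j x_{i',j} = e_1(x_{i'})\mapsto 0$ collapses everything.

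The main obstacle I anticipate is the bookkeeping in the quadratic-generator step when several tensor indices among $j$ coincide or when $i = i'$ (the generator $x_{i,j}^2$): the syzygies among the monomial generators $\{x_{i,j}x_{i',j}\}_{i,i'}$ for fixed $j$ form the Koszul-type relations of a square-zero algebra, and I need to check that applying $\varphi$ to all of these, combined with $\varphi(e_1(x_i))=0$, leaves no freedom for $\varphi(g)$ to be a nonzero constant. Once every quadratic generator maps to $0$ and every linear generator maps to $0$, $\mathscr{R}$-linearity forces $\varphi$ to vanish on all of $I$ (since $I$ is generated in degrees $1$ and $2$), giving $\varphi = 0$; in particular $\varphi$ is trivially a $\kk$-linear combination of the $\partial_{i,j}$ (the empty one), which is what "trivial negative tangents in this degree range" requires.
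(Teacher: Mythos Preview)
Your proposal contains a genuine error that derails you from the correct argument. You write down the relevant syzygy $x_{i',j}\cdot(x_{i,j}x_{i'',j}) = x_{i,j}\cdot(x_{i',j}x_{i'',j})$ and then dismiss it with the claim that ``since $\varphi(g)$ is a scalar, multiplying by a variable kills it, so all these relations are automatically satisfied and give no contradiction.'' This is false: if $\varphi(g)=c\in\kk$, then $x_{i,j}\cdot c = c\,x_{i,j}$ is a nonzero degree-$1$ element of $G(A)$ whenever $c\neq 0$. Applying $\varphi$ to the syzygy actually gives
\[
\varphi(x_{i,j}x_{i'',j})\,x_{i',j} \;=\; \varphi(x_{i',j}x_{i'',j})\,x_{i,j}
\]
in $G(A)$, and since $x_{i,j}$ and $x_{i',j}$ are linearly independent for $i\neq i'$, both constants must vanish. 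That is precisely the paper's argument: it uses the single syzygy $x_{k,j}\cdot(x_{i,j}^2)=x_{i,j}\cdot(x_{i,j}x_{k,j})$ for $i\neq k$ and invokes linear independence of $x_{1,j},\dots,x_{n-1,j}$ in $G(A)$ to conclude $\varphi(x_{i,j}^2)=\varphi(x_{i,j}x_{k,j})=0$ immediately.

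The alternative route you then pursue does not work as stated. You propose using $e_1(x_{i'})\cdot x_{i,j} - x_{i',j}x_{i,j}=\sum_{j'\neq j}x_{i',j'}x_{i,j}$, but the monomials $x_{i',j'}x_{i,j}$ with $j'\neq j$ are \emph{not} generators of $I$ (the quadratic generators all have both variables sharing the same second index), so you cannot evaluate $\varphi$ on them individually. All you can conclude from $\mathscr{R}$-linearity is $\varphi\bigl(x_{i,j}e_1(x_{i'})\bigr)=x_{i,j}\varphi(e_1(x_{i'}))=0$, which gives no information about $\varphi(x_{i',j}x_{i,j})$ by itself. The fix is simply to return to the syzygy you wrote down first and use it correctly.
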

\begin{proof}
  By Lemma \ref{l:simplifying-the-gc-ideal}, $I$ is generated by
  quadratic and linear forms. As a result, if $\deg(\varphi)<-2$, then
  $\varphi=0$, so we need only consider the case where
  $\deg(\varphi)=-2$. Then $\varphi$ sends the quadratic generators of
  $I$ to constants, and for $i\neq k$, we have
  \begin{equation}\label{eqn:syzygy-used-to-show-deg--2-vanishes}
    x_{k,j}\varphi(x_{i,j}^2)=x_{i,j}\varphi(x_{i,j}x_{k,j}).
  \end{equation}
  However, $x_{1,j},\dots,x_{m,j}$
  are linearly independent elements of $G^{(n)}(A_m)$, and hence,
  $\varphi(x_{i,j}^2)=\varphi(x_{i,j}x_{k,j})=0$.
\end{proof}

\subsection{Tangents of degree minus one:~preliminaries}

Having dispensed with the case where $\deg(\varphi)\leq -2$, we must
now consider the case where $\deg(\varphi)=-1$. Showing triviality of
such tangents will involve a careful analysis of the syzygies of $I$,
and occupies the remainder of Section \ref{sec:triviality-neg-tngts}.

We begin by obtaining some preliminary information about the images
under $\varphi$ of the quadratic generators of $I$.

\begin{lemma}\label{l:degneg1-obvious-syzygy-generalcase}
  Let $\varphi\colon I\to G^{(n)}(A_m)$ have degree $-1$. Assume either (i) $n\geq4$ and $m\geq2$ or (ii) $n=3$ and $m\geq3$. Then
  $\varphi(x_{i,j}^2)$ and $\varphi(x_{i,j}x_{k,j})$ are in the
  $\kk$-linear span of $x_{1,j},\dotsc,x_{n-1,j}$.
\end{lemma}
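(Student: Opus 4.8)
The plan is to exploit syzygies of $I$ coming from the quadratic generators, just as in the proof of Proposition~\ref{prop:no-degleqneg2-generalcase}, but now tracking the fact that $\varphi$ has degree $-1$ rather than $-2$, so that $\varphi(x_{i,j}^2)$ and $\varphi(x_{i,j}x_{k,j})$ are linear forms in $G(A)$ (living in multi-degrees $(0,\dots,2,\dots,0)-(0,\dots,1,\dots,0)$ type combinations). First I would fix a column index $j$ and consider the quadratic generators supported in that column, namely the $x_{i,j}x_{k,j}$ for $1\le i,k<n$. For $i\ne k$ and any third index $\ell$ (which exists since $n\ge 4$ gives us at least three row indices $1,\dots,n-1$), multiply the obvious Koszul-type syzygy: from $x_{\ell,j}\cdot(x_{i,j}x_{k,j}) = x_{k,j}\cdot(x_{i,j}x_{\ell,j}) = x_{i,j}\cdot(x_{k,j}x_{\ell,j})$ inside $\mathscr{R}$ (all three being the monomial $x_{i,j}x_{k,j}x_{\ell,j}$), applying $\varphi$ and using $\mathscr{R}$-linearity yields
\[
x_{\ell,j}\,\varphi(x_{i,j}x_{k,j}) = x_{k,j}\,\varphi(x_{i,j}x_{\ell,j}) = x_{i,j}\,\varphi(x_{k,j}x_{\ell,j})
\]
as elements of $G(A)$.

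The heart of the argument is then a linear-algebra extraction in $G(A)$: I want to show these relations force $\varphi(x_{i,j}x_{k,j})$ to lie in $\mathrm{span}_\kk\{x_{1,j},\dots,x_{n-1,j}\}$, i.e.\ to have no component involving variables $x_{a,b}$ with $b\ne j$ and no component involving products of two variables from columns other than $j$. Write $\varphi(x_{i,j}x_{k,j}) = \sum_a c_a x_{a,j} + (\text{terms involving some } x_{a,b} \text{ with } b\ne j)$, bearing in mind $G(A)$ is spanned by squarefree-in-each-column monomials modulo the column-square relations together with the relations from $e_1(x_i)\in I$. Multiplying by $x_{\ell,j}$ (with $\ell\notin\{i,k\}$) and comparing with the symmetric expression $x_{k,j}\varphi(x_{i,j}x_{\ell,j})$, the point is that a "rogue" term of $\varphi(x_{i,j}x_{k,j})$ involving a variable $x_{a,b}$ with $b\ne j$ would, after multiplying by $x_{\ell,j}$, produce a monomial $x_{\ell,j}x_{a,b}(\cdots)$ that cannot be matched by $x_{k,j}$ times anything unless that rogue term was already a multiple of something killed appropriately; a careful bookkeeping of which monomials of $G(A)$ are nonzero (using that $x_{i,j}^2=0$ and that $n\ge 4$ ensures enough room) pins down the coefficients. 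I would run this comparison for two distinct choices of the auxiliary index $\ell$ to eliminate the ambiguity and conclude.

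The main obstacle I anticipate is controlling the relations in $G(A)$ coming from $e_1(x_i)=\sum_j x_{i,j}$: these are the only relations outside a single column, and they could in principle allow a term of $\varphi(x_{i,j}x_{k,j})$ that looks like it involves column $b\ne j$ to secretly equal a column-$j$ expression. So the real work is to choose a monomial basis of $G(A)$ adapted to a fixed column $j$ — for instance, eliminating $x_{i,n}$ via $e_1(x_i)$ as the introduction suggests, or more symmetrically tracking $S_n$-weights — and then verify that after multiplying by $x_{\ell,j}$ the relation $x_{\ell,j}\varphi(x_{i,j}x_{k,j}) = x_{k,j}\varphi(x_{i,j}x_{\ell,j})$ is only consistent when the unwanted components vanish. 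The case of the square $\varphi(x_{i,j}^2)$ is handled the same way using $x_{k,j}\varphi(x_{i,j}^2) = x_{i,j}\varphi(x_{i,j}x_{k,j})$ for $k\ne i$, together with the conclusion just obtained about $\varphi(x_{i,j}x_{k,j})$; since $x_{i,j}^2=0$ in $G(A)$ there is a mild subtlety that the relation only constrains $\varphi(x_{i,j}^2)$ up to the annihilator of $x_{i,j}$, which is exactly where I would again use $n\ge 4$ to get a second witness and pin it down.
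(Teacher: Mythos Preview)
Your general strategy --- apply $\varphi$ to the Koszul syzygies among the quadratic generators and extract the answer from the resulting degree-$2$ equations in $G(A)$ --- is exactly the paper's approach. But your proposal stops precisely at the step that does the actual work: the linear-independence claim in $G(A)_2$. You correctly flag this as ``the main obstacle'' and describe what would need to be done, but you do not do it; as written, the proposal is a plan rather than a proof.

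Here is how the paper closes that gap. It uses only the single syzygy $x_{2,1}\varphi(x_{1,1}^2)=x_{1,1}\varphi(x_{1,1}x_{2,1})$ --- no third row index is needed --- and then separates both sides by the $\ZZ^{n-1}$-grading. This reduces the entire lemma to proving that the set
\[
\mathcal{S}=\{\,x_{1,1}x_{2,j},\ x_{2,1}x_{1,j}\ :\ 1<j<n\,\}
\]
is linearly independent in $G(A)_2$. The relations $e_1(x_i)=0$ are, as you anticipated, exactly what make this nontrivial. The paper disposes of it by constructing, for each $1<j_0<n$, an explicit linear functional $f_{j_0}$ on $G(A)_2$ which takes the value $1$ on $x_{1,1}x_{2,j_0}$ and $0$ on the rest of $\mathcal{S}$; the functional is defined on four monomials using an auxiliary \emph{column} index $\ell_0\ne 1,j_0,n$, and checked to vanish on all relations. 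This is where $n\ge4$ enters --- one needs four column indices, not three row indices as your write-up suggests.

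Your idea of ``two distinct choices of the auxiliary index $\ell$'' does not obviously substitute for this: varying the row index just produces more syzygy relations of the same shape, each of which still requires the same linear-independence input in $G(A)_2$ to read off the coefficients. So the missing ingredient is genuinely a separating-functional (or explicit-basis) argument for $G(A)_2$, and once you supply it the square and the mixed product are handled simultaneously by the one syzygy above.
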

\begin{proof}
  Let $m\geq2$.  By symmetry and for ease of notation, it is enough to
  show the statement for $\varphi(x_{1,1}^2)$ and
  $\varphi(x_{1,1}x_{2,1})$. By degree constraints we know
  $\varphi(x_{1,1}^2)$ and $\varphi(x_{1,1}x_{2,1})$ are in the
  $\kk$-span of the $x_{i,j}$. Using that $e_1(x_i)=0$ in
  $G^{(n)}(A_m)$, we may write
  \[
  \varphi(x_{1,1}^2)=\sum_i\sum_{j\neq
    n}a_{i,j}x_{i,j}\quad\textrm{and}\quad
  \varphi(x_{1,1}x_{2,1})=\sum_i\sum_{j\neq n}b_{i,j}x_{i,j},
  \]
  where $a_{i,j},b_{i,j}\in \kk$. Consider the syzygy
  \begin{equation}\label{eqn:first-quadratic-syzgy}
    x_{2,1}\varphi(x_{1,1}^2)=x_{1,1}\varphi(x_{1,1}x_{2,1}).
  \end{equation}
  Expanding, we see
  \[
  \sum_i\sum_{j\neq1,n}a_{i,j}x_{2,1}x_{i,j} =
  \sum_i\sum_{j\neq1,n}b_{i,j}x_{1,1}x_{i,j}.
  \]
  We must show that $a_{i,j}=b_{i,j}=0$ for $j\neq1,n$.

  By considering the $\ZZ^m$-grading on $G^{(n)}(A_m)$, we see
  \begin{equation}\label{eqn:n-2-2}
    \sum_{j\neq1,n}a_{1,j}x_{2,1}x_{1,j} =
    \sum_{j\neq1,n}b_{2,j}x_{1,1}x_{2,j};
  \end{equation}
  for $i\neq 2$ we have
  \begin{equation}\label{eqn:n-2-1-1-first}
    \sum_{j\neq1,n}b_{i,j}x_{1,1}x_{i,j}=0
  \end{equation}
  and for $i\neq 1$ we have
  \begin{equation}\label{eqn:n-2-1-1-second}
    \sum_{j\neq1,n}a_{i,j}x_{2,1}x_{i,j}=0.
  \end{equation}
  First assume $n\geq4$. We claim it is enough to prove linear
  independence of the set
  $\mathcal{S}:=\{x_{1,1}x_{2,j},x_{2,1}x_{1,j}\mid
  j\neq1,n\}$. Indeed, this would show that all coefficients $a_{1,j}$
  and $b_{2,j}$ vanish in equation \eqref{eqn:n-2-2}; furthermore, the
  transposition $(2,i)\in S_n$ acts on $G^{(n)}(A_m)$, sending
  $x_{1,1}x_{i,j}$ to $x_{1,1}x_{2,j}$, so linear independence of
  $\mathcal{S}$ implies the vanishing of all coefficients $b_{i,j}$ in
  equation \eqref{eqn:n-2-1-1-first}. A similar argument applies for
  equation \eqref{eqn:n-2-1-1-second}.

  Let $G^{(n)}(A_m)_2$ denote the $\kk$-vector space of elements of $G^{(n)}(A_m)$
  whose total degree is $2$; this is the vector space generated by
  elements of the form $x_{k,\ell}x_{i,j}$. To prove linear
  independence of $\mathcal{S}$, by symmetry of $x_1$ and $x_2$, it is
  enough show that for every $1<j_0<n$, there is a linear functional
  $f_{j_0}$ on $G^{(n)}(A_m)_2$ such that for all $1<j<n$, we have
  $f_{j_0}(x_{2,1}x_{1,j})=0$ and $f_{j_0}(x_{1,1}x_{2,j})=\delta_{j,j_0}$
  with $\delta$ the Kronecker delta function. Since $n\geq4$, we can
  choose $\ell_0\neq 1,j_0,n$. Then we define
  \[
  f_{j_0}(x_{1,1}x_{2,j_0})=f_{j_0}(x_{1,\ell_0}x_{2,n})=1,\quad
  f_{j_0}(x_{1,1}x_{2,n})=f_{j_0}(x_{1,\ell_0}x_{2,j_0})=-1,
  \]
  and let $f_{j_0}(x_{k,\ell}x_{i,j})=0$ for all other quadratics with
  $1\leq i,k<n$ and $1\leq \ell, j\leq n$. The functional $f_{j_0}$ is
  well-defined since it vanishes on all expressions of the form
  $x_{k,j}x_{i,j}$ and $x_{k,\ell}e_1(x_i)$ with $1\leq i,k<n$ and
  $1\leq \ell, j\leq n$.

When $n=3$, the set $\mathcal{S}$ is no longer linearly independent. Eliminating the $x_{i,3}$ variables, we see
\begin{equation}\label{eqn:G3Am}
G^{(3)}(A_m)\simeq k[x_{i,1},x_{i,2}\mid 1\leq i\leq m]/(x_{i,j}x_{k,j}, x_{i,1}x_{k,2}+x_{k,1}x_{i,2}).
\end{equation}
Thus, equations \eqref{eqn:n-2-1-1-first} and \eqref{eqn:n-2-1-1-second} tell us $a_{i,2}=b_{i,2}=0$ for $i>2$. Equation \eqref{eqn:n-2-2} tells us $a_{1,2}=-b_{2,2}$. By symmetry of the $x_{i,2}$ variables, we have therefore shown (with a slight change in notation) that
\[
\varphi(x_{1,1}x_{k,1})=\sum_i a_i^{(k)}x_{i,1} + b_1^{(k)}x_{1,2} + b_k^{(k)}x_{k,2}
\]
where $b_k^{(k)}=-2b_1^{(1)}$. Then assuming $m\geq3$, we may consider the syzygy
\[
x_{3,1}\varphi(x_{1,1}x_{2,1})=x_{2,1}\varphi(x_{1,1}x_{3,1}),
\]
we find
\[
b_1^{(2)}x_{1,2}x_{3,1}+b_2^{(2)}x_{2,2}x_{3,1}=b_1^{(3)}x_{2,1}x_{1,2}+b_3^{(3)}x_{2,1}x_{3,2}
\]
which implies $b_1^{(2)}=b_1^{(3)}=0$ and $b_2^{(2)}=-b_3^{(3)}$. Using that $b_k^{(k)}=-2b_1^{(1)}$, we see $b_2^{(2)}=b_3^{(3)}=0$. Hence, $\varphi(x_{1,1}x_{2,1})$ is in the span of the $x_{i,1}$.
\end{proof}


\subsection{The case where \texorpdfstring{$m\geq2$}{mge2} and
  \texorpdfstring{$n\geq4$}{nEven} is even}
\label{subsec:n-even-first-syz}

The proof of Lemma \ref{l:degneg1-obvious-syzygy-generalcase} was
obtained by considering syzygies among the quadratic generators of
$I$. Next we consider syzygies involving quadratic generators as well
as the linear generators $e_1(x_i)$. The form of these syzygies will
depend on the parity of $n$. This subsection considers the case where
$n$ is even; the case where $n$ is odd is handled in
\S\ref{subsec:n-odd-first-syz}.

In Proposition \ref{prop:degneg1-e1-syzygy-general-case-n-even}, we
will obtain refined information about $\varphi(e_1(x_i))$ and
$\varphi(x_{i,j}^2)$. The proof will rely on certain non-vanishing
results which we establish in Lemmas
\ref{l:degneg1-e1-syzygy-general-case--helper} and
\ref{l:degneg1-e1-syzygy-general-case--helper2-n-even}.

\begin{lemma}\label{l:degneg1-e1-syzygy-general-case--helper}
  Let $M\geq1$ and
  \[
  R=\kk[w_1,\dots,w_{2M},z]/\Bigl(w_1^2,\dotsc,w_{2M}^2, \sum_i
  w_i,z^2\Bigr).
  \]
  Then
  \[
  z\prod_{i=1}^M(w_{2i-1}-w_{2i})
  \]
  is a non-zero element of $R$.
\end{lemma}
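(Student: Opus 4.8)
The plan is to first eliminate $z$. Since $z^2$ is one of the relations and $z$ appears in no other relation, $R$ is isomorphic to $R'[z]/(z^2)$ where $R':=\kk[w_1,\dots,w_{2m}]/(w_1^2,\dots,w_{2m}^2,\sum_i w_i)$, and as a $\kk$-vector space $R=R'\oplus R'z$. Consequently $z\prod_{i=1}^m(w_{2i-1}-w_{2i})$ is nonzero in $R$ if and only if $x:=\prod_{i=1}^m(w_{2i-1}-w_{2i})$ is nonzero in $R'$, so it suffices to prove the latter.

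To prove $x\neq0$ in $R'$, I would lift the question to $\Lambda:=\kk[w_1,\dots,w_{2m}]/(w_1^2,\dots,w_{2m}^2)$, whose $\kk$-basis is the set of squarefree monomials $\prod_{i\in S}w_i$ for $S\subseteq\{1,\dots,2m\}$; in particular $w_1\cdots w_{2m}\neq0$ in $\Lambda$. Writing $e=\sum_i w_i$ so that $R'=\Lambda/(e)$, the crux is a pair of identities valid in $\Lambda$: first, $x\cdot e=0$, since grouping $e=\sum_{i=1}^m(w_{2i-1}+w_{2i})$ and using $(w_{2i-1}-w_{2i})(w_{2i-1}+w_{2i})=w_{2i-1}^2-w_{2i}^2=0$ makes every summand of $xe$ vanish; and second, $x^2=\prod_{i=1}^m(w_{2i-1}-w_{2i})^2=\prod_{i=1}^m(-2\,w_{2i-1}w_{2i})=(-2)^m\,w_1\cdots w_{2m}\neq0$, where we use $\chr\kk=0$. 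If $x$ lay in $(e)\Lambda$, say $x=ey$, then $x^2=(xe)y=0$, contradicting the second identity. Hence $x\notin(e)\Lambda$, i.e.\ $x\neq0$ in $R'$, which finishes the proof.

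I do not anticipate a real obstacle here: the only points requiring care are the standard fact that $\Lambda$, being the tensor product over $\kk$ of the algebras $\kk[w_i]/(w_i^2)$, has the monomial basis described above (so that $w_1\cdots w_{2m}\neq0$), and the elementary verification that reducing modulo $z^2$ is as transparent as claimed. The one idea worth isolating is the use of the identities $xe=0$ and $x^2\neq0$: instead of describing the ideal $(e)\subseteq\Lambda$ explicitly, we exploit that these two facts are together incompatible with $x\in(e)\Lambda$.
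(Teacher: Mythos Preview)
Your proof is correct, but it takes a different route from the paper's. The paper argues by passing to the \emph{quotient} $R/(w_1+w_2,\dots,w_{2m-1}+w_{2m})$, which is isomorphic to $\kk[w_1,w_3,\dots,w_{2m-1},z]/(z^2,w_1^2,\dots,w_{2m-1}^2)$; in that quotient the element becomes $2^m z w_1 w_3\cdots w_{2m-1}$, a squarefree monomial and hence nonzero. You instead first peel off $z$ and then \emph{lift} to $\Lambda=\kk[w_1,\dots,w_{2m}]/(w_1^2,\dots,w_{2m}^2)$, using the slick pair of identities $xe=0$ and $x^2=(-2)^m w_1\cdots w_{2m}\neq0$ to rule out $x\in(e)\Lambda$. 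The paper's approach is perhaps more direct---one quotient map and done---while yours avoids having to identify the quotient ring explicitly and instead exploits an elegant annihilator-versus-square argument. Both rely on characteristic zero (through $2^m\neq0$ and $(-2)^m\neq0$ respectively), and both ultimately reduce to the nonvanishing of a squarefree monomial in a ring of the form $\kk[y_1,\dots,y_k]/(y_1^2,\dots,y_k^2)$.
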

\begin{proof}
  Consider the quotient $R'=R/J$ where
  $J=(w_1+w_2,\dots,w_{2M-1}+w_{2M})$. Then
  $f:=z\prod_{i=1}^M(w_{2i-1}-w_{2i})$ has image in
  \[
  R'\simeq \kk[w_1,w_3,\dots,w_{2M-1},z] /
  (z^2,w_1^2,w_3^2,\dots,w_{2M-1}^2)
  \]
  given by $2^Mzw_1w_3\dots w_{2M-1}$. Since this expression is
  square-free, it is non-zero in $R'$ and hence $f$ is non-zero in
  $R$.
\end{proof}

\begin{lemma}\label{l:degneg1-e1-syzygy-general-case--helper2-n-even}
  Let $M\geq1$ and
  \[
  R=\kk[w_1,\dots,w_{2M}]/\Bigl(w_1^2,\dotsc,w_{2M}^2,\sum_i
  w_i\Bigr).
  \]
  If $c,a_1,\dots,a_{2M}\in \kk$ and
  \[
  c\prod_{i=1}^M(w_{2i-1}-w_{2i}) =
  \sum_{\substack{j\mathrm{\ odd} \\ 1\leq j\leq 2M}}(a_jw_j-a_{j+1}w_{j+1})
  \prod_{\substack{k\mathrm{\ odd} \\ k\neq j}}(w_k-w_{k+1}),
  \]
  then $c=\frac{1}{2}\sum_{j=1}^{2M}a_j$ holds.
\end{lemma}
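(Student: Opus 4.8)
The plan is to detect the constant $c$ by pushing the displayed identity forward along a projection of $R$ onto a monomial quotient, in the spirit of the proof of Lemma \ref{l:degneg1-e1-syzygy-general-case--helper}. Since the defining ideal of $R$ is homogeneous, $R$ is graded, and the two sides of the identity are homogeneous of the same degree $m$. First I would pass to
\[
\bar R \;=\; R/(w_1+w_2,\ w_3+w_4,\ \dots,\ w_{2m-1}+w_{2m}).
\]
The linear relation $\sum_i w_i$ is the sum of the imposed relations $w_{2i-1}+w_{2i}$, so it becomes redundant, and hence
\[
\bar R \;\cong\; \kk[w_1,w_3,\dots,w_{2m-1}]/(w_1^{2},w_3^{2},\dots,w_{2m-1}^{2}),
\]
the square-free algebra on the $m$ variables $w_1,w_3,\dots,w_{2m-1}$. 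This ring is graded, and its degree-$m$ piece is one-dimensional: it is spanned by the nonzero element $w_1w_3\cdots w_{2m-1}$, the unique square-free monomial of degree $m$ in $m$ variables.

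Let $q\colon R\to\bar R$ be the quotient map. Since $q(w_{2i})=-w_{2i-1}$, we have $q(w_{2i-1}-w_{2i})=2w_{2i-1}$ and $q(a_kw_k-a_{k+1}w_{k+1})=(a_k+a_{k+1})w_k$ for every odd $k$. Applying $q$ to the left-hand side gives $2^{m}c\,(w_1w_3\cdots w_{2m-1})$. Applying $q$ to the right-hand side, the summand indexed by an odd integer $j$ maps to $2^{m-1}(a_j+a_{j+1})\,(w_1w_3\cdots w_{2m-1})$, so the whole right-hand side maps to $2^{m-1}\bigl(\sum_{j=1}^{2m}a_j\bigr)(w_1w_3\cdots w_{2m-1})$. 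Comparing the coefficients of the nonzero element $w_1w_3\cdots w_{2m-1}$ of $\bar R$ now yields $2^{m}c=2^{m-1}\sum_{j=1}^{2m}a_j$, that is, $c=\tfrac12\sum_{j=1}^{2m}a_j$, which is the claim.

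The step I expect to require the most care is the identification of $\bar R$ and the consequent non-vanishing of $w_1w_3\cdots w_{2m-1}$: one must verify that after imposing $w_{2i-1}+w_{2i}=0$ the ambient linear relation $\sum_i w_i=0$ truly disappears, so that $\bar R$ is the full square-free algebra on $m$ variables rather than a smaller quotient in which the top monomial dies. Once that is in place the remainder is a short coefficient computation, in which the $2^{m-1}$ factors of $2$ produced by $w_{2i-1}-w_{2i}\mapsto 2w_{2i-1}$ combine with the single linear factor coming from the distinguished pair, $a_jw_j-a_{j+1}w_{j+1}\mapsto(a_j+a_{j+1})w_j$, so that each $a_j$ enters the final coefficient exactly once and with equal weight; comparison in the one-dimensional top graded piece of $\bar R$ then pins down $c$ uniquely. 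I do not anticipate a genuine obstacle beyond this bookkeeping.
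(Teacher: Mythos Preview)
Your proposal is correct and follows essentially the same route as the paper: both pass to the quotient $R/(w_1+w_2,\dots,w_{2m-1}+w_{2m})\cong\kk[w_1,w_3,\dots,w_{2m-1}]/(w_1^2,\dots,w_{2m-1}^2)$, compute that the two sides become $2^m c\,w_1w_3\cdots w_{2m-1}$ and $2^{m-1}\bigl(\sum_j a_j\bigr)w_1w_3\cdots w_{2m-1}$, and conclude by noting this top square-free monomial is nonzero. Your write-up is in fact slightly more careful than the paper's in spelling out why $\sum_i w_i$ becomes redundant in the quotient.
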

\begin{proof}
  Consider the quotient $R'=R/J$ where
  $J=(w_1+w_2,\dots,w_{2m-1}+w_{2m})$. Then
  \[
  R'\simeq \kk[w_1,w_3,\dots,w_{2m-1}]/(w_1^2,w_3^2,\dots,w_{2m-1}^2).
  \]
  In this quotient ring, we have
  \[
  2^mc w_1w_3\dots w_{2m-1}=2^{m-1}\sum_{j=1}^{2m}a_j w_1w_3\dots
  w_{2m-1}.
  \]
  Since $w_1w_3\dots w_{2m-1}$ is square-free, it is non-zero and so
  $c=\frac{1}{2}\sum_{j=1}^{2m}a_j$.
\end{proof}

The following result greatly constrains the form $\varphi$ may take
when $n$ is even.

\begin{proposition}\label{prop:degneg1-e1-syzygy-general-case-n-even}
Let $m\geq2$ and assume $n\geq4$ is even.  If $\varphi\colon I\to G^{(n)}(A_m)$ has degree $-1$,
  then there exist $a_{i,j}\in\kk$ for which
  \[
  \varphi(x_{i,j}^2)=a_{i,j}x_{i,j}\quad\quad\textrm{and}\quad\quad
  \varphi(e_1(x_i))=\frac{1}{2}\sum_j a_{i,j}.
  \]
Furthermore, for any fixed $j$ and any linear form $g=\sum_i \lambda_ix_{i,j}$ with $\lambda_i\in\kk$, we have
\[
\varphi(g^2)=\lambda g
\]
for some $\lambda\in\kk$.
\end{proposition}

\begin{proof}
The ``furthermore'' statement follows immediately from the previous statements since one may use the $\GL_m$-action on $G^{(n)}(A_m)$ induced by the action on $A_m$. In other words, there exists $\alpha\in\GL_m$ such that $\alpha(x_{1,j})=g$. One may then apply the previous statements to the map $\alpha^{-1}\varphi\alpha$ which is also $\kk[x_{i,j}]$-linear of degree $-1$, thereby showing
\[
\varphi(g^2)=\alpha(\alpha^{-1}\varphi\alpha(x_{1,j}^2))=\alpha(\lambda x_{1,j}) = \lambda g.
\]

  Next, we show that $\varphi(x_{i,j}^2)$ is a scalar
  multiple of $x_{i, j}$. By symmetry, it is enough to do so when
  $i=1$. By Lemma \ref{l:degneg1-obvious-syzygy-generalcase}, we
  may write
  \[
  \varphi(x_{1,j}^2)=\sum_i b_{i,j}x_{i,j}
  \]
  with $b_{i,j}\in \kk$.

  For any $\sigma\in S_n$, by writing
  \[
  e_1(x_1) = \sum_{j\textrm{\ odd}}(x_{1,\sigma(j)}+x_{1,\sigma(j+1)})
  \]
  and multiplying through by $\prod_{k\textrm{\ odd}}
  (x_{1,\sigma(k)}-x_{1,\sigma(k+1)})$, we arrive at the following
  syzygy:
  \begin{align}\label{first-interesting-syzygy-when-n-is-even}
    \begin{split}
      \prod_{k\textrm{\ odd}} (x_{1,\sigma(k)}-x_{1,\sigma(k+1)})
      &\varphi(e_1(x_1))=
      \\ \sum_{j\textrm{\ odd}}\prod_{\substack{k\textrm{\ odd}\\k\neq
          j}}
      &(x_{1,\sigma(k)}-x_{1,\sigma(k+1)})\bigl(\varphi(x_{1,\sigma(j)}^2)
      - \varphi(x_{1,\sigma(j+1)}^2)\bigr).
    \end{split}
  \end{align}
  We next substitute $\sum_i b_{i,j}x_{i,j}$ in place of
  $\varphi(x_{1,j}^2)$ and compare the terms on both sides of the
  syzygy which are of multidegrees
  $(\frac{n}{2}-1,0,\dots,0,1,0,\dots,0)$, where the $1$ is in the
  $i$th place and $i\neq1$. This yields
  \begin{equation}\label{eqn:first-interesting-syzygy-first-eqn}
    \sum_{j\textrm{\ odd}}\prod_{\substack{k\textrm{\ odd}\\k\neq j}}
    (x_{1,\sigma(k)}-x_{1,\sigma(k+1)})(b_{i,\sigma(j)}x_{i,\sigma(j)}
    - b_{i,\sigma(j+1)}x_{i,\sigma(j+1)}) = 0.
  \end{equation}
  We must show $b_{i,j}=0$ for all $i\neq1$. By symmetry in $i$, it is
  enough to prove $b_{2,j}=0$ for all $j$. Letting
  \[
  J_\sigma = (x_{1,\sigma(n-1)},x_{1,\sigma(n)}) +
  (x_{2,\sigma(j)}\mid j\leq n-2)+(x_{i,j}\mid i\neq1,2),
  \]
  we see $G^{(n)}(A_m)/J_\sigma$ is isomorphic to the ring $R$ in the
  statement of Lemma \ref{l:degneg1-e1-syzygy-general-case--helper},
  where $M=\frac{n}{2}-1$ and the isomorphism is given by
  $x_{1,\sigma(i)}\mapsto w_i$, $x_{2,\sigma(n-1)}\mapsto z$, and
  $x_{2,\sigma(n)}\mapsto -z$. Thus, in $R$, equation
  \eqref{eqn:first-interesting-syzygy-first-eqn} becomes
  \[
  (b_{2,\sigma(n-1)}+b_{2,\sigma(n)})z\prod_{i=1}^M(w_{2i-1}-w_{2i})=0.
  \]
  By Lemma \ref{l:degneg1-e1-syzygy-general-case--helper}, we see
  $b_{2,\sigma(n-1)}=-b_{2,\sigma(n)}$ for all $\sigma$. So, for all
  $j$, by choosing $j,k,\ell$ distinct, we see that
  $b_{2,j}=-b_{2,k}$, $b_{2,k}=-b_{2,\ell}$, $b_{2,j}=-b_{2,\ell}$
  which implies $b_{2,j}=0$.

  To finish the proof, we must calculate
  $\varphi(e_1(x_i))$. Substituting
  $\varphi(x_{1,j}^2)=a_{1,j}x_{1,j}$ into
  \eqref{first-interesting-syzygy-when-n-is-even} with $\sigma$ equal
  to the
  identity permutation, and considering terms of multidegree
  $(\frac{n}{2},0,\dots,0)$, we see
  \begin{equation}\label{eqn:multidegree-n-over-2}
    \prod_{k\textrm{\ odd}} (x_{1,k}-x_{1,k+1})\varphi(e_1(x_1)) =
    \sum_{j\textrm{\ odd}}\prod_{\substack{k\textrm{\ odd}\\k\neq j}}
    (x_{1,k}-x_{1,k+1})(a_{1,j}x_{1,j} - a_{1,j+1}x_{1,j+1}).
  \end{equation}
  Let $J=(x_{i,j}\mid i\neq 1)$. Since $G^{(n)}(A_m)/J$ is isomorphic to the
  ring $R$ from Lemma
  \ref{l:degneg1-e1-syzygy-general-case--helper2-n-even}, it follows
  that $\varphi(e_1(x_1))=\frac{1}{2}\sum_j a_{1,j}$.
\end{proof}

The following result is the last ingredient we need to show $G^{(n)}(A_m)$ has trivial negative tangents when $n$ is even.

\begin{proposition}\label{prop:degneg1-second-syzygy-generaln}
  Let $n\geq2$ and $\varphi\colon I\to G^{(n)}(A_m)$ have degree $-1$. Suppose that for all linear forms $g=\sum_i \lambda_ix_{i,j}$ with $\lambda_i\in\kk$, we have $\varphi(g^2)=\lambda g$ for some $\lambda\in\kk$. If $\varphi(x_{i,j}^2)=a_{i,j}x_{i,j}$ with $a_{i,j}\in\kk$, then for all distinct $i,k$, we have
  \[
  \varphi(x_{i,j}x_{k,j})=\frac{1}{2}a_{k,j}x_{i,j} +
  \frac{1}{2}a_{i,j}x_{k,j}.
  \]
In particular, this holds when $n\geq4$ and $m\geq2$.
\end{proposition}
\begin{proof}
Proposition \ref{prop:degneg1-e1-syzygy-general-case-n-even} shows that the hypotheses hold when $n\geq4$ and $m\geq2$. Let
\[
\varphi((x_{i,j}+x_{k,j})^2)=b^{i,k}_j (x_{i,j}+x_{k,j})\quad\textrm{and}\quad
\varphi((x_{i,j}-x_{k,j})^2)=c^{i,k}_j (x_{i,j}-x_{k,j})
\]
for $b^{i,k}_j,b^{i,k}_k\in\kk$. Suming our two expressions for $(x_{i,j}\pm x_{k,j})^2$, we see
\begin{align*}
b^{i,k}_j (x_{i,j}+x_{k,j}) + c^{i,k}_j (x_{i,j}-x_{k,j}) &= \varphi((x_{i,j}+x_{k,j})^2) + \varphi((x_{i,j}-x_{k,j})^2)\\
 &= 2\varphi(x_{i,j}^2+x_{k,j}^2) = 2(a_{i,j}x_{i,j}+a_{k,j}x_{k,j}).
\end{align*}
By linear independence of $x_{i,j}$ and $x_{k,j}$, we then have
\[
2b^{i,k}_j = a_{i,j}+a_{k,j}\quad\textrm{and}\quad 2c^{i,k}_j = a_{i,j}-a_{k,j}.
\]

Next, we see
\begin{align*}
2\varphi(x_{i,j}x_{k,j}) &=\varphi((x_{i,j}+x_{k,j})^2) - \varphi(x_{i,j}^2)-\varphi(x_{k,j}^2)\\
&=b^{i,k}_j(x_{i,j}+x_{k,j}) - a_{i,j}x_{i,j} - a_{k,j}x_{k,j}\\
&=a_{k,j}x_{i,j} + a_{i,j}x_{k,j}
\end{align*}
which finishes the proof.
\end{proof}

We may now show triviality of negative tangents for $m\geq2$ and $n\geq4$ even.

\begin{proposition}\label{prop:main-n-even-geq4}
If $m\geq2$ and $n\geq4$ is even, then $G^{(n)}(A_m)$ has trivial negative tangents.
\end{proposition}
\begin{proof}
We write $G^{(n)}(A_m)$ as the quotient of the ring $\mathscr{R}=\kk[x_{i,j}\mid
    1\leq i\leq m, 1\leq j\leq n]$ by the ideal $I$. Let $\varphi\colon I\to G^{(n)}(A_m)$ be a negatively graded
  $\mathscr{R}$-module morphism. If $\deg(\varphi)\leq-2$, then Lemma
  \ref{prop:no-degleqneg2-generalcase} shows $\varphi=0$. It remains
  to prove that if $\deg(\varphi)=-1$, then $\varphi$ is in the
  $\kk$-span of the maps $\partial_{i,j}$ given by differentiation by
  $x_{i,j}$. Proposition \ref{prop:degneg1-e1-syzygy-general-case-n-even} shows
  $\varphi(x_{i,j}^2)=a_{i,j}x_{i,j}$ for $a_{i,j}\in\kk$. Let
  $\partial:=\frac{1}{2}\sum_{i,j}a_{i,j}\partial_{i,j}$. Then
  $\varphi(x_{i,j}^2)=\partial(x_{i,j}^2)$ and Proposition
  \ref{prop:degneg1-second-syzygy-generaln} shows that
  $\varphi(x_{i,j}x_{k,j})=\partial(x_{i,j}x_{k,j})$, for distinct $i$
  and $k$. Proposition
  \ref{prop:degneg1-e1-syzygy-general-case-n-even} also shows
  $\varphi(e_1(x_i))=\partial(e_1(x_i))$, and hence
  $\varphi=\partial$.
\end{proof}

\subsection{The case where \texorpdfstring{$m\geq2$}{mge2} and
  \texorpdfstring{$n\geq5$}{nOdd} is odd} 
\label{subsec:n-odd-first-syz}

In this section, we show that if $n$ is odd, then triviality of negative tangents for $G^{(n)}(A_m)$ follows from that of $G^{(n-1)}(A_m)$. We first require a preliminary lemma.

\begin{lemma}\label{l:degneg1-e1-syzygy-general-case--helper-n-odd}
  Let $n\geq3$ be odd and
  \[
  R=\kk[w_1,\dots,w_n,z_{n-2},z_{n-1},z_n]/J,
  \]
  where
  \begin{align*}
    J =(&w_1^2,\dotsc,w_{n}^2,\, z_{n-2}^2,z_{n-1}^2,z_{n}^2,\,
    w_{n-2}+w_{n-1}+w_n,\, z_{n-2}+z_{n-1}+z_n)\\ &+ (w_iz_i\mid i\geq
    n-2) +(w_{2i-1}+w_{2i}\mid 1\leq i\leq \frac{n-3}{2}).
  \end{align*}
  If 
  \[
  f := \prod_{\substack{k\mathrm{\ odd}\\k<n}}(w_k-w_{k+1})bz_n = 0,
  \]
  then $b=0$.
\end{lemma}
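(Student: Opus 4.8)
The plan is to follow the template of Lemmas~\ref{l:degneg1-e1-syzygy-general-case--helper} and~\ref{l:degneg1-e1-syzygy-general-case--helper2-n-even}: reduce the vanishing of $f$ to a computation in a small quotient ring. Since $R$ already contains the relations $w_{2i-1}+w_{2i}=0$ for $1\le i\le m-1$, I would first eliminate the variables $w_2,w_4,\dots,w_{n-3}$ to obtain a $\kk$-algebra isomorphism
\[
R\simeq E\otimes_\kk S,
\]
where $E=\kk[w_1,w_3,\dots,w_{n-4}]/(w_1^2,w_3^2,\dots,w_{n-4}^2)$ and $S$ is the ring on the remaining six variables $w_{n-2},w_{n-1},w_n,z_{n-2},z_{n-1},z_n$ defined by the relations that all six variables square to zero, $w_{n-2}+w_{n-1}+w_n=0$, $z_{n-2}+z_{n-1}+z_n=0$, and $w_iz_i=0$ for $i\in\{n-2,n-1,n\}$. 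The point is that, after the substitution, none of the defining relations of $S$ involves $w_1,\dots,w_{n-4}$, so the defining ideal splits as a sum over the two disjoint sets of variables and the tensor decomposition follows.

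Next I would rewrite $f$. In $R$ the relation $w_{2i-1}+w_{2i}=0$ gives $w_k-w_{k+1}=2w_k$ for each odd $k\le n-4$, so
\[
\prod_{\substack{k\text{ odd}\\k\neq n-2}}(w_k-w_{k+1})=2^{m-1}u\quad\text{and}\quad\prod_{\substack{k\text{ odd}\\k<n}}(w_k-w_{k+1})=2^{m-1}u\,(w_{n-2}-w_{n-1}),
\]
where $u:=w_1w_3\cdots w_{n-4}$ is the top square-free monomial of $E$. Hence $f=2^{m-1}\,u\cdot g$ with
\[
g:=w_n(b_{n-2}z_{n-2}-b_{n-1}z_{n-1})+(w_{n-2}-w_{n-1})b_nz_n\in S.
\]
Since $u\neq0$ in $E$ and $R\simeq E\otimes_\kk S$, the element $u\otimes g$ vanishes if and only if $g=0$ in $S$; thus $f=0$ is equivalent to $g=0$ in $S$.

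It remains to extract the claimed linear relation from $g=0$ in $S$. Substituting $w_n=-(w_{n-2}+w_{n-1})$ and $z_n=-(z_{n-2}+z_{n-1})$ and writing $p=w_{n-2}$, $q=w_{n-1}$, $x=z_{n-2}$, $y=z_{n-1}$, and using $\chr\kk=0$ to clear the $2$'s produced by squaring, one finds
\[
S\simeq\kk[p,q,x,y]/(p^2,q^2,x^2,y^2,pq,xy,px,qy,py+qx).
\]
From the monomial ideal $(p^2,q^2,x^2,y^2,pq,xy,px,qy)$ one reads off a $\kk$-basis $\{1,p,q,x,y,py,qx\}$ of the corresponding quotient, whence $S$ has $\kk$-basis $\{1,p,q,x,y,py\}$ with $qx=-py\neq0$. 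Expanding $g$ in these variables and using $px=qy=0$ and $qx=-py$ yields $g=(b_{n-2}+b_{n-1}-2b_n)\,py$, so $g=0$ forces $b_{n-2}+b_{n-1}=2b_n$.

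The computations are routine; the one point requiring care is the non-vanishing $py\neq0$ in $S$, together with the tensor decomposition $R\simeq E\otimes_\kk S$ which guarantees that multiplying $g$ by the square-free monomial $u$ loses no information. This non-vanishing is the analogue here of the square-free monomial non-vanishing used in the even case. A secondary bookkeeping point is keeping the index ranges straight: since $n=2m+1$, the odd indices $k<n$ are $1,3,\dots,n-2$, so the second product in $f$ contributes the extra factor $w_{n-2}-w_{n-1}$ that is \emph{not} absorbed into $u$.
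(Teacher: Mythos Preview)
Your argument is correct and follows essentially the same route as the paper: eliminate the paired variables $w_{2i}$ for $i\le m-1$, then reduce the remaining six-variable piece to a four-variable quotient and read off the coefficient of the surviving basis element. The only cosmetic differences are that you make the tensor factorization $R\simeq E\otimes_\kk S$ explicit (the paper works directly in a single quotient $R'$ and exhibits a basis), and you eliminate $w_n,z_n$ in $S$ whereas the paper eliminates $w_{n-1},z_{n-1}$; both choices lead to the same conclusion $g=(b_{n-2}+b_{n-1}-2b_n)\cdot(\text{nonzero basis element})$.
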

\begin{proof}
  We may eliminate all variables with even subscripts as follows:
  $w_{n-1}=-(w_{n-2}+w_n)$, $z_{n-1}=-(z_{n-2}+z_n)$, and
  $w_{2i}=-w_{2i-1}$
  for $1\leq i\leq \frac{n-3}{2}$.
  Then $w_{n-1}^2=0$ tells us $w_{n-2}w_n=0$ and similarly for the
  $z$-variables. Since $w_{n-1}z_{n-1}=0$, we have
  $w_{n-2}z_n=-z_{n-2}w_n$. Thus, we see $R\simeq R'$, where
  \[
  R'=\kk[w_1,w_3,\dots,w_n,z_{n-2},z_n]/J'
  \]
  and
  \[
  J'=(w_1^2,w_3^2,\dotsc,w_n^2,\,z_{n-2}^2,z_n^2,\,w_{n-2}z_{n-2},\,
  w_nz_n,\, w_{n-2}w_n,\,z_{n-2}z_n,\,w_{n-2}z_n+z_{n-2}w_n).
  \]
  Notice that $R'$ has a $\kk$-basis given by $w_1^{c_1}w_3^{c_3}\dots
  w_{n-4}^{c_{n-4}}h$ with each $c_i\in\{0,1\}$ and $h$ an element of
  $\{1,w_{n-2},z_{n-2},w_n,z_n,w_{n-2}z_n\}$.
  In $R'$, we have
\[
    f = 2^{\frac{n-1}{2}}bw_1w_3\dotsm w_{n-2}z_n.
\]
  As a result, if $f=0$, then $b=0$.
\end{proof}

\begin{proposition}\label{prop:GnTNT-evenImpliesOdd}
If $m\geq2$ and $n\geq5$ is odd, then $G^{(n)}(A_m)$ has trivial negative tangents.
\end{proposition}
\begin{proof}
Let $\mathscr{R}=\kk[x_{i,j}]$. By Proposition \ref{prop:no-degleqneg2-generalcase}, we need only consider maps $\varphi\colon I\to G^{(n)}(A_m)=\mathscr{R}/I$ of degree $-1$. By Lemma \ref{l:degneg1-obvious-syzygy-generalcase}, we know $\varphi$ sends the ideal $K:=(x_{1,n},\dots,x_{m,n})$ to $K/I$. Noting that
\[
G^{(n-1)}(A_m)\simeq \mathcal{R}/(I+K),
\]
we obtain a commutative diagram
\[
\xymatrix{
I\ar[r]^-{\varphi}\ar@{^{(}->}[d] & G^{(n)}(A_m)\ar@{->>}[d]\\
I+K\ar[r]^-{\psi} & G^{(n-1)}(A_m)
}
\]
where we let $\psi(K)=0$. Since $n-1$ is even and at least $4$, Proposition \ref{prop:main-n-even-geq4} tells us that there are $c_{i,j}\in\kk$ such that
\[
\psi=\sum_{i=1}^m\sum_{j=1}^n c_{i,j}\partial_{i,j}=:\partial,
\]
where $\partial_{i,j}$ denotes the derivative with respect to $x_{i,j}$. We claim that $\varphi=\partial$. Replacing $\varphi$ by $\varphi-\partial$, we may therefore assume $\varphi$ has degree $-1$ and $\varphi(I) \subset K/I$.

We must show $\varphi=0$. For $j\neq n$, we see by Lemma \ref{l:degneg1-obvious-syzygy-generalcase} that $\varphi(x_{i,j}x_{\ell,j})$ is contained in $((x_{1,j},\dots,x_{m,j})\cap K)/I=0$. Similarly, $\varphi(e_1(x_i))=0$ for all $i$. It therefore remains to prove that $\varphi(x_{i,n}x_{\ell,n})=0$ for all $i,\ell$.

For this, consider the following syzygy analogous to \eqref{first-interesting-syzygy-when-n-is-even}:
  \begin{align}\label{eqn:first-interesting-syzygy-odd-case-case2}
    \begin{split}
      x_{\ell,n}\prod_{\substack{k\textrm{\ odd}\\k<n}}
      &(x_{i,k}-x_{i,k+1})\varphi(e_1(x_i))
      =\\ &x_{\ell,n}\sum_{\substack{j\textrm{\ odd}\\j<n}}\prod_{\substack{k\textrm{\ odd}\\k\neq
          j}} (x_{i,k}-x_{i,k+1})\bigl(\varphi(x_{i,j}^2) -
      \varphi(x_{i,j+1}^2)\bigr) +
      \prod_{\substack{k\textrm{\ odd}\\k<n}}(x_{i,k}-x_{i,k+1})\varphi(x_{i,n}x_{\ell,n}).
    \end{split}
  \end{align}
In our case, \eqref{eqn:first-interesting-syzygy-odd-case-case2} is particularly simple, and tells us
\[
\prod_{\substack{k \textrm{\ odd}\\k < n}} (x_{i,k} - x_{i,k+1})  \varphi(x_{i,n}x_{\ell,n}) = 0.
\]
By Lemma \ref{l:degneg1-obvious-syzygy-generalcase}, we may write $\varphi(x_{i,n}x_{\ell,n})=\sum_p b^{i,\ell}_p x_{p,n}$. Then taking into account multidegrees, the above equation tells us that for each $p$, 
\begin{equation}\label{eqn:first-interesting-syzygy-odd-case-case2-last}
\prod_{\substack{k \textrm{\ odd}\\k < n}} (x_{i,k} - x_{i,k+1}) b^{i,\ell}_p x_{p,n} = 0,
\end{equation}
and so $b^{i,\ell}_p$ vanishes by Lemma \ref{l:degneg1-e1-syzygy-general-case--helper-n-odd}. 
\end{proof}

\subsection{The case where \texorpdfstring{$n=3$}{n equals 3}}\label{sec:nequals3-new-pf}

Our goal in this subsection is to prove the following:

\begin{theorem}
  \label{thm:n3}
  When $m\ge 3$, the ring $G^{(3)}(A_m)$ has trivial negative
  tangents.
  
  Moreover, $G^{(3)}(A_m)$ lives on an irreducible locus
$Z_m:=\bbA^{2m}\times\Gr(\binom{m}{2},m^2+m+1)\subset \hilb^{d(3,m)}(\bbA^{2m})$ whose dimension is larger than that of the main component of $\hilb^{d(3,m)}(\bbA^{2m})$ for $m\geq11$.
  \end{theorem}

\begin{remark}
  \label{rmk:m2n3}
  The ideal defining $G^{(3)}(A_2)$ is given by
  \[
  I' = (z_{1}, z_{2})^2 + (w_{1}, w_{2})^2 + (z_{1}w_{2} + z_{2}w_{1})
  + (z_{1}w_{1}, z_2w_2).
  \]
  It is proved in \cite[Theorem 1.3]{Satriano--Staal--2023} that the
  ideal $J = (z_{1}, z_{2})^2 + (w_{1}, w_{2})^2 + (z_{1}w_{2} +
  z_{2}w_{1})$ is the first member in an infinite family of smooth
  points on elementary components of $\hilb^{pts}(\bbA^4)$, and that
  for most of these ideals, taking quotients by sufficiently general
  socle elements produces further examples of smooth points on
  elementary components \cite[Theorem 1.5]{Satriano--Staal--2023}.
  Here $I'$ is obtained from $J$ by adding socle elements, however
  $I'$ cannot have trivial negative tangents, because its colength is
  $6$ (see \cite{CEVV--2009}).
\end{remark}

Throughout this section, we use that $G^{(3)}(A_m)$ has an explicit basis given by $1$, the $x_{i,j}$ for $j=1,2$ and all $x_{i,1}x_{k,2}$ for $i<k$. This follows from \eqref{eqn:G3Am}.

\begin{proposition}\label{prop:degneg1-e1-syzygy-general-case-n-equals3}
If $\varphi\colon I\to G^{(3)}(A_m)$ has degree $-1$, then for any fixed $j$ and any linear form $g=\sum_i \lambda_ix_{i,j}$ with $\lambda_i\in\kk$, we have
\[
\varphi(g^2)=\lambda g
\]
for some $\lambda\in\kk$.
\end{proposition}
\begin{proof}
As in Proposition \ref{prop:degneg1-e1-syzygy-general-case-n-even}, it suffices to prove the result when $g=x_{1,1}$. Note that
\[
2x_{1,1}x_{2,1}=(x_{1,1}+x_{1,2}-x_{1,3})e_1(x_1) - x_{1,1}^2 - x_{1,2}^2 + x_{1,3}^2\in I.
\]
Letting $1 < k \leq m$, and considering the multigraded pieces of the syzygy
\begin{equation}\label{eqn:x11x12x11xk1}
x_{k,1}\varphi(x_{1,1}x_{1,2})=x_{1,2}\varphi(x_{1,1}x_{k,1})
\end{equation}
and using that $\varphi(x_{1,1}x_{k,1})$ only has $x_{i,1}$ terms, we see that $\varphi(x_{1,1}x_{1,2})$ is in the span of $x_{1,2}$, $x_{k,2}$, and the $x_{i,1}$. Since $k$ is arbitrary and $m\geq3$, this implies that $\varphi(x_{1,1}x_{1,2})$ is in the span of $x_{1,2}$ and the $x_{i,1}$.

We now consider the syzygy
\[
x_{1,2}\varphi(x_{1,1}^2)=x_{1,1}\varphi(x_{1,1}x_{1,2}).
\]
Using that $\varphi(x_{1,1}^2)$ is in the span of the $x_{i,1}$, we find $\varphi(x_{1,1}^2)$ is in the span of $x_{1,1}$.
\end{proof}

\begin{proof}[Proof of Theorem \ref{thm:n3}]
Proposition \ref{prop:no-degleqneg2-generalcase} tells us we may assume $\deg(\varphi)=-1$. Proposition \ref{prop:degneg1-e1-syzygy-general-case-n-equals3} shows $\varphi(x_{i,j}^2)=a_{i,j}x_{i,j}$ for $a_{i,j}\in\kk$, and also tells us that the hypotheses of Proposition \ref{prop:degneg1-second-syzygy-generaln} are valid. As a result, 
  \[
  \varphi(x_{i,j}x_{k,j})=\frac{1}{2}a_{k,j}x_{i,j} +
  \frac{1}{2}a_{i,j}x_{k,j}.
  \]
Thus, $\varphi=\frac{1}{2}\sum_{i,j}a_{i,j}\partial_{i,j}$ provided we can show $\varphi(e_1(x_i))=\frac{1}{2}\sum_j a_{i,j}$. By symmetry, it suffices to do so when $i=1$.

Returning to \eqref{eqn:x11x12x11xk1}, we see that if we express $\varphi(x_{1,1}x_{1,2})$ as a linear combination of the $x_{i,1}$ and $x_{i,2}$, then the $x_{1,2}$-coefficient of $\varphi(x_{1,1}x_{1,2})$ is equal to the $x_{k,1}$-coefficient of $\varphi(x_{1,1}x_{k,1})$, namely $\frac{1}{2}a_{1,1}$. We next consider the syzygy
\begin{align*}
2(x_{1,1}+x_{1,2})\varphi(e_1(x_1)) &= (x_{1,1}+x_{1,2}-x_{1,3})\varphi(e_1(x_1))\\
& = \varphi(x_{1,1}^2)+\varphi(x_{1,2}^2)-\varphi(x_{1,3}^2)+2\varphi(x_{1,1}x_{1,2})\\
& = (a_{1,1}+a_{1,3})x_{1,1} + (a_{1,2}+a_{1,3})x_{1,2} + 2\varphi(x_{1,1}x_{1,2}).
\end{align*}
Considering the $x_{1,2}$-coefficient, we find
\[
\varphi(e_1(x_1)) = \frac{1}{2}\sum_j a_{1,j}
\]
and hence $G^{(3)}(A_m)$ has trivial negative tangents.

Lastly, let $\mathscr{R}':=\kk[x_{i,j}\mid 1\leq i\leq m,1\leq j\leq 2]$ and let $M$ be the ideal generated by the $x_{i,j}x_{k,\ell}$ for $i=k$ or $j=\ell$. Then $\mathscr{R}'/M$ has basis given by $1$, all $x_{i,j}$, and all $x_{i,1}x_{k,2}$ for $i\neq k$. Hence, $D:=\dim\mathscr{R}'/M=m^2+m+1$. From \eqref{eqn:G3Am}, we see $G^{(3)}(A_m)$ is obtained from $\mathscr{R}'/M$ by additionally modding out by $\binom{m}{2}$ socle elements. Letting such socle elements vary over all possible choices yields a Grassmannian. Such points of this Grassmannian are all supported at the origin of $\bbA^{2m}$, so after allowing for translation, we obtain the locus $Z_m$. Computing, we find $\dim Z_m=(D-\binom{m}{2})\binom{m}{2} +2m$ is strictly greater than $2m(D+\binom{m}{2})$ when $m\geq11$.
\end{proof}

\subsection{Proof of Theorems \ref{thm:main} and \ref{thm:main-higher-gc}}
\label{subsec:pf-of-thm-main}

We prove Theorem \ref{thm:main-higher-gc} which implies Theorem \ref{thm:main}.

\begin{proof}[{Proof of Theorem \ref{thm:main-higher-gc}}]
By \cite[Theorem 1.2]{Jelisiejew--2019}, it suffices to prove $G:=G^{(n)}(A_m)$
  has trivial negative tangents. Note that in Theorems \ref{thm:main} and \ref{thm:main-higher-gc}, we are viewing $G$ as
  a point of $\hilb^{d(n,m)}(\bbA^{m(n-1)})$ by eliminating the
  variables $x_{i,n}$ for all $i$, as explained in the
  introduction. The elimination of the $x_{i,n}$ variables does not
  change the $\ZZ$-grading (or the $\ZZ^m$-grading) on
  $G$. Since, $T^1(G/\kk,G)_{<0}$ is independent of the
  choice of graded quotient presentation (see \S\ref{sub:tcc}), to
  prove triviality of negative tangents, it suffices to express
  $G$ as the quotient of the ring $\mathscr{R}=\kk[x_{i,j}\mid
    1\leq i\leq m, 1\leq j\leq n]$ by the ideal $I$.

For $n\geq4$ and $m\geq2$, we see $G$ has trivial negative tangents by Propositions \ref{prop:main-n-even-geq4} and \ref{prop:GnTNT-evenImpliesOdd}. We know $G$ has trivial negative tangents for $n=3$ and $m\geq3$ by Theorem \ref{thm:n3}.

It remains to consider the cases where $n\leq2$ or $m=1$ or $(n,m)=(3,2)$. When $n=1$, we see $e_1(x_i)=x_{i,1}$ and so $G^{(1)}(A_m)=\kk$, which has trivial negative tangents. When $n=2$, by eliminating all $x_{i,2}$ variables, we find $G^{(2)}(A_m)=\kk[x_{i,1}]/(x_{1,1},\dots,x_{m,1})^2\simeq A_m$. When $m\geq2$, we see $A_m$ is smoothable of dimension strictly greater than $1$, hence does not have trivial negative tangents. When $m=1$, $A_1\simeq\kk$ has trivial negative tangents.

For $m=1$ and $n\geq3$, consider the map $\varphi\colon I\to G$ given by $\varphi(x_{1,1})=x_{1,2}$ and $\varphi(g)=0$ for all minimal generators of $I$ with $g\neq x_{1,1}$. Since $x_{1,1}$ and $x_{1,2}$ are linearly independent for $n\geq3$, we see $\varphi$ is not in the $\kk$-span of the derivatives $\partial_{i,j}$. (Note that for $n=2$, $x_{1,1}$ and $x_{1,2}$ are linearly dependent.)

The remaining case $G^{(3)}(A_2)$ is checked by computer.
\end{proof}

\section{Socle Elements of the Galois closure}
\label{sec:socles-gc}

In this section, we give an explicit construction of the graded pieces
of minimal degree in the socle of the Galois closure. This makes key use of our structure result Theorem \ref{thm:structure-thm-higher-gc}. 
The main result of this section is:

\begin{theorem} \label{thm:socles-gc}
  Let $m+1\geq n\geq4$ and $D=\lceil\frac{n}{2}\rceil$. The socle elements of
  minimal degree in $G^{(n)}(A_m)$ occur in degree $D$.

  Specifically, suppose 
  $d=(d_1,\dots,d_m)$ has $\sum_i d_i=D$. Let $\mu=(D,D)$ if $n$
  is even and $\mu=(D-1,D-1,1)$ if $n$ is odd. Then every copy of
  $V_\mu\subset G^{(n)}(A_m)_d$
  is contained in the socle.
\end{theorem}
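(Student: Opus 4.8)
The plan is to prove two things: (I) $G(A)$ has no socle in degrees $1,\dots,D-1$; and (II) any $S_n$-submodule $W\subseteq G(A)_d$ isomorphic to $V_\mu$ with $\mu_1=\mu_2$ lies in the socle. Together these give the theorem: a copy of $V_{(D,D)}$ (resp.\ of $V_{(D-1,D-1,1)}$) occurs in $G(A)_d$ with $\sum_i d_i=D$, namely for $d=D\be_1$ (resp.\ $d=(D-1)\be_1+\be_2$), since the partition associated to $d$ is then $\mu$ itself and $K_{\mu\mu}=1$; as $\mu_1=\mu_2=n-D$ in both cases, (II) places this copy in the socle, while (I) forbids socle in lower degrees, so the minimal socle degree is exactly $D$. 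Two observations are used throughout. First, $S_n$ acts on $G(A)$ by graded automorphisms, so each $\Soc(G(A))_d$ is an $S_n$-submodule of $G(A)_d$, and it suffices to determine which irreducibles occur in it. Second, for an $S_n$-submodule $W\subseteq G(A)_d$ the multiplication maps $m_i\colon G(A)_{\be_i}\otimes W\to G(A)_{d+\be_i}$ are $S_n$-equivariant with image $\sum_\ell x_{i,\ell}W$, so $W\subseteq\Soc(G(A))$ if and only if $m_i=0$ for all $i$; and by the structure theorem $G(A)_{\be_i}\cong V_{(n-1,1)}$ is the standard representation, so each $m_i$ is a morphism of representations $V_{(n-1,1)}\otimes W\to G(A)_{d+\be_i}$.

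For (II): let $W\cong V_\mu$ with $\mu_1=\mu_2$; since $V_\mu$ occurs in $G(A)_d$ we have $\mu_1=n-\sum_i d_i$. If $G(A)_{d+\be_i}=0$ there is nothing to check; otherwise the structure theorem shows every irreducible constituent $V_\nu$ of $G(A)_{d+\be_i}$ has $\nu_1=n-\sum_i d_i-1=\mu_1-1$. By the projection formula, $V_{(n-1,1)}\otimes V_\mu$ equals $\operatorname{Ind}_{S_{n-1}}^{S_n}\operatorname{Res}_{S_{n-1}}V_\mu$ with one copy of $V_\mu$ removed, so by the branching rule each of its constituents is $V_\nu$ with $\nu$ obtained from $\mu$ by deleting a box and then adding a box in a different cell. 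Such a $\nu$ has $\nu_1=\mu_1-1$ only if the deleted box lies in the first row of $\mu$, which is impossible since $\mu_1=\mu_2$. Hence $V_{(n-1,1)}\otimes V_\mu$ and $G(A)_{d+\be_i}$ have no common constituent and $m_i=0$.

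For (I): suppose $V_\nu$ occurs in $\Soc(G(A))_d$ with $1\le\sum_i d_i=e<D$. By the structure theorem $\nu_1=n-e\ge\lfloor n/2\rfloor+1>n/2$, so $\nu_1>\nu_2$ (otherwise $\nu_1+\nu_2>n$); in particular $\nu$ has a removable corner in its first row. We must show $m_i\ne0$ for some $i$, contradicting $V_\nu\subseteq\Soc(G(A))$ — and here the constituent count of (II) no longer suffices, since $V_{(n-1,1)}\otimes V_\nu$ does contain constituents of first part $\nu_1-1$ and these may occur in the target. Instead one exhibits an explicit nonzero image. Choose $i$ with $d_i=0$ (possible as $e<n-1$); since $d_k\le e\le\lfloor(n-1)/2\rfloor$ for all $k$ we get $(d+\be_i)_j\le n-1-e$ for all $j$, so $G(A)_{d+\be_i}\ne0$. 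Now use the elementary fact that a monomial $\prod_k x_{i_k,j_k}$ with \emph{distinct} columns $j_k$ is nonzero in $G(A_n)$ whenever its multidegree $d'$ obeys $d'_j\le n-|d'|$ for all $j$ — which holds because $S_n$ acts transitively on the monomials of a fixed multidegree, so if $G(A)_{d'}\ne0$ none of them can vanish. It remains to promote this to the statement $m_i\ne0$ on the particular copy of $V_\nu$: take a polytabloid-type generator $w$ of that copy, and a column $\ell_0$, and show $x_{i,\ell_0}w\ne0$ by degenerating $G(A_n)$ to a tensor product $G(A_{m_0})\otimes G(A_{m_1})$ of smaller Galois closures — just as in Lemmas \ref{l:partition-shape-02111111-non-zero-elmt} and \ref{l:partition-shape-02011111-non-zero-elmt} — chosen so that $x_{i,\ell_0}w$ maps to a product of squarefree monomials generating the sign representations of the $G(A_{m_t})$, which are nonzero by \cite[Theorem 27]{bhargava-satriano} (and \cite[Lemma 31]{bhargava-satriano} certifies that the required multidegrees survive the degeneration). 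I expect this last step — choosing, uniformly over all $d$ with $\sum_i d_i<D$ and all constituents $V_\nu$ of $G(A)_d$, a degeneration and a generator $w$ making this matching work — to be the main obstacle.
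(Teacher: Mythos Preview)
Your argument for (II) is correct and is genuinely cleaner than the paper's. The paper proves Proposition~\ref{prop:socles-of-minimal-deg} by building the explicit commutative square \eqref{eqn:j-small-socle} relating multiplication by $x_{i,j}$ to a map $\chi_{i,j}\colon M_\mu\to M_{\mu'}$, and then observing that $\mu'_1>\lambda'_1$ forces $\theta_{T',\sigma}$ to vanish. Your route---recognising that the multiplication map is an $S_n$-morphism $V_{(n-1,1)}\otimes V_\mu\to G(A)_{d+\be_i}$ and killing it by comparing first parts of constituents via the branching rule---avoids all of the tableau bookkeeping. This is a real simplification; the only thing lost is that the paper's explicit diagram \eqref{eqn:j-small-socle} is reused later for~(I), so in the paper's architecture the investment pays off twice.

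Part (I), however, is not a proof. You correctly observe that the constituent count no longer suffices (since $V_{(n-1,1)}\otimes V_\nu$ \emph{does} contain irreducibles with first part $\nu_1-1$ once $\nu_1>\nu_2$), and you propose to exhibit a nonzero element $x_{i,\ell_0}w$ by degenerating $G(A_n)$ to a tensor product of smaller Galois closures as in Lemmas~\ref{l:partition-shape-02111111-non-zero-elmt}--\ref{l:partition-shape-02011111-non-zero-elmt}. But you then declare this ``the main obstacle'' and stop. It is indeed an obstacle: the polytabloid generator $w$ of a given copy of $V_\nu$ is a signed sum of many monomials, and under any fixed degeneration most of these monomials die; arranging that the survivors do not cancel, \emph{uniformly} over all $d$ with $|d|<D$ and all SSYT $T$ of all shapes $\nu$, is exactly the content one needs to supply. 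The two lemmas you cite handle a single monomial of a single fixed shape; they give no template for the general polytabloid.

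The paper sidesteps this entirely. Rather than showing $x_{i,j}w\neq0$ for some $w$, it proves Proposition~\ref{prop:no-socles-of-smaller-deg-improved-version}: from $V_\nu\subset G(A)_d$ with $\nu_1>\nu_2$ one builds $\nu'=(\nu_1-1,\nu_2,\dots,\nu_\ell,1)$ and a new SSYT $T'$, and shows by an explicit polytabloid identity that $\theta_{T',1}(V_{\nu'})\subset\fmm\cdot\theta_{T,1}(V_\nu)$. Since $\nu'_1=\lambda'_1$, the structure theorem makes $\theta_{T',1}|_{V_{\nu'}}$ injective, so $\fmm\cdot\theta_{T,1}(V_\nu)\neq0$. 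The key computation is the formula $\tau\theta_{T',1}(e_t)=\sum_{i}(-1)^{\ell+1-i}x_{m,\tau(c_i)}\theta_{T,1}(e_{t_i})$, obtained by partitioning the column-stabilizer of $t$ according to which label occupies the new box. This is the missing ingredient in your proposal: an explicit, shape-independent identity relating polytabloids for $\nu'$ to $x_{m,j}$ times polytabloids for $\nu$, with nonvanishing certified by the structure theorem rather than by an ad hoc degeneration.
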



The next result reduces Theorem \ref{thm:socles-gc} to the case of the Galois closure from \cite{bhargava-satriano}.

\begin{proposition}\label{prop:socles-highgc-->gc}
Theorem \ref{thm:socles-gc} holds provided it is true when $m=n-1$.
\end{proposition}
\begin{proof}
Let $m\geq n-1$. Let $d$ be as in the statement of the theorem and fix a copy $V_\mu\subset G^{(n)}(A_m)_d$. We show $V_\mu$ is contained in the socle. Multiplication by $x_{i,j}$ maps $G^{(n)}(A_m)_d$ to a multigraded piece $G^{(n)}(A_m)_{d'}$ where $\#|\{i\mid d'_i\neq0\}|\leq\lceil n/2\rceil+1\leq n-1$ since $n\geq4$. So, after permuting the variables we can assume $d'=(d'_1,\dots,d'_{n-1},0,\dots,0)$ which we can view as living in $\NN^{n-1}$; we may therefore also view $d$ as living in $\NN^{n-1}$. Then Lemma \ref{l:multidegleqnparts} tells us we have a commutative diagram
\[
\xymatrix{
G^{(n)}(A_m)_d\ar[d]^-{\pi_d}_-{\simeq}\ar[r]^-{\cdot x_{i,j}} & G^{(n)}(A_m)_{d'}\ar[d]_-{\pi_{d'}}^-{\simeq}\\
G^{(n)}(A_{n-1})_d\ar[r]^-{\cdot x_{i,j}} & G^{(n)}(A_{n-1})_{d'}
}
\]
with vertical maps being isomorphisms of $S_n$-representations. As a result, by the $m=n-1$ case, we see
\[
\pi_{d'}(x_{i,j}(V_\mu))=x_{i,j}(\pi_d(V_\mu))=0
\]
and so $x_{i,j}(V_\mu)=0$.

Let us now show that if $\sum_i d_i<D$ and $0\neq f\in G^{(n)}(A_m)_d$, then $f$ is not a socle. As in the previous paragraph, we may view $d$ as living in $\NN^{n-1}$ and so we have an isomorphism $\pi_d\colon G^{(n)}(A_m)_d\xrightarrow{\simeq} G^{(n)}(A_{n-1})_d$. By the $m=n-1$ case, we know $\pi_d(f)$ is not a socle, so there exists $i,j$ with $x_{i,j}f\neq0$. Since $i\leq n-1\leq m$, we may multiply by $x_{i,j}$ in $G^{(n)}(A_m)$ to obtain a commutative diagram as above. It follows that $\pi_{d'}(x_{i,j}f)=x_{i,j}\pi_d(f)\neq0$ and hence $x_{i,j}f\neq0$.
\end{proof}

For the remainder of this section, we assume $m=n-1$ and freely use the notation and
terminology from \cite[Section 12.2]{bhargava-satriano} and
\cite[Chapter 2]{sagan}. We write $G(A)$ in place of $G^{(n)}(A_{n-1})$ and write partitions of $n$ as
$\mu=(\mu_1,\dots,\mu_\ell)$ with $\mu_1\geq\mu_2\geq \dotsb
\geq\mu_\ell>0$ and $\sum_i\mu_i=n$. To each such $\mu$, there is an
associated $S_n$-representation $M_\mu$ with basis given by tabloids
of shape $\mu$, see e.g., \cite[Definition 2.1.5]{sagan}. We conflate
$\mu$ with its Ferrers diagram, see \cite[Definition 2.1.1]{sagan}. We
label the boxes of $\mu$ from left-to-right and top-down, e.g.~the
$\mu_1$th box is the last box on the first row of $\mu$, and the
$(\mu_1+1)$th box is the first box on the second row of $\mu$. Let
$\{t_\mu\}$ denote the standard generator of $M_\mu$, i.e.~it is the
row-equivalence class of the tableau $t_{\mu}$ of shape $\mu$ whose
$i$th box has label $i$, i.e.\ $t_{\mu}(i)=i$.

We recall how the copies of $V_\mu\subset G(A)_d$ are constructed. Let
$\lambda$ be the partition associated to $d$ and let $T$ be a
(generalized) tableau of shape $\mu$ and content $\lambda$; this
necessarily implies $\mu\vartriangleright\lambda$. Then there is a
morphism $\theta_T\colon M_\mu\to M_\lambda$, see \cite[Definition
  2.9.3]{sagan}. As shown in the proof of \cite[Proposition
  33]{bhargava-satriano} and the surrounding discussion,
$G(A)_d=M_a/I_a\cong M_\lambda/I_\lambda$, where $a = (n-\sum_i d_i,
d_1,\dotsc,d_{n-1})$ is the \emph{ordered partition associated to $d$}
and the isomorphism is induced by a permutation $\sigma\in S_n$
satisfying
$\sigma(a)=(a_{\sigma^{-1}(1)},a_{\sigma^{-1}(2)},\dotsc,a_{\sigma^{-1}(n)})
=\lambda$.
So we obtain a map
\[
\theta_{T,\sigma}\colon M_\mu\stackrel{\theta_T}{\longrightarrow}
M_\lambda\stackrel{\sigma^{-1}}{\longrightarrow} M_a \longrightarrow
G(A)_d.
\]

There is a canonical copy of $V_\mu\subset M_\mu$. If $T$ is
semi-standard and $\mu_1=\lambda_1$, then the image of $V_\mu$ under
$\theta_{T,\sigma}$ yields a copy $V_\mu\subset G(A)_d$, see
\cite[Theorem 2.10.1]{sagan} and the proof of \cite[Proposition
  35]{bhargava-satriano}. The content of \cite[Theorem
  27]{bhargava-satriano} is that these are all the copies of $V_\mu$
in $G(A)_d$.

Explicitly, the map $\theta_{T,\sigma}$ can be described in the
following way (see the proof of \cite[Proposition
  35]{bhargava-satriano}). Let $\sigma^{-1}T$ be the tableau of shape
$\mu$ and content $a$ whose $i$th label is $(\sigma^{-1}T)(i) =
\sigma^{-1}(T(i))$.  To every tableau $S$ of shape $\mu$ and content
$a$, we obtain an element $\alpha(S)\in G(A)_d$ given as follows:~let
$x_0:=1$ and $S(i)$ denote the label in the $i$th box of $S$. Then
\begin{equation}\label{eqn:def-of-alpha}
  \alpha(S):=\prod_{i=1}^n x_{S(i)-1,i}\quad\textrm{and}\quad
  \theta_{T,\sigma}(\{t_\mu\})=\sum_{S\sim \sigma^{-1}T}\alpha(S),
\end{equation}
where $S\sim \sigma^{-1}T$ means that $S$ and $\sigma^{-1}T$ are row
equivalent, i.e.~they are both of shape $\mu$ and content $a$, and
they have the same labels in each row up to permutation.

\vspace{1em}

Ultimately, our goal is to determine how multiplication by $x_{i,j}$
acts on the copies of $V_\mu\subset G(A)_d$. In order to do so, we
begin by understanding how multiplication by $x_{i,j}$ relates to the
maps $\theta_{T,\sigma}$.  We make the following observations which
will aid in our study of the socle. First, if
$d=(d_1,\dotsc,d_{n-1})$, then multiplication by $x_{i,j}$ sends
$G(A)_d$ to $G(A)_{d'}$ where
$d'=(d_1,\dotsc,d_{i-1},d_i+1,d_{i+1},\dotsc,d_{n-1})$. We may assume
$n-\sum_i d'_i\geq d'_j$ for all $j$, otherwise $G(A)_{d'}=0$ by
\cite[Lemma 31]{bhargava-satriano}. Let $\lambda$ and $a$ be the
partition and ordered partition associated to $d$, and let
$\sigma(a)=\lambda$; let $k$ be the minimal index such that
$\lambda_k=d_i$.  Let $\lambda'$ and $a'$ be the partition and ordered
partition associated to $d'$, so that
\[
\lambda' = (\lambda_1-1, \lambda_2, \dotsc, \lambda_{k-1},
\lambda_k+1, \lambda_{k+1},\dotsc, \lambda_n).
\]
Let $\mu\vartriangleright\lambda$ with $\mu_1=\lambda_1$.

Let $T$ be a semi-standard tableau of shape $\mu$ and content
$\lambda$.
Since $T$ is semi-standard and $\mu_1=\lambda_1=a_1$, all boxes in the
first rows of $T$ and $\sigma^{-1}T$ are labelled $1$ and no other $1$
labels occur in either tableau. 
Let $\mu:=(\mu_1,\mu_2,\dots,\mu_\ell)$ and
$\widetilde{\mu}':=(\mu_1-1,\mu_2,\dots,\mu_\ell,1)$. Note that
$\widetilde{\mu}'$ might not be a partition shape since $\mu_1-1$
might be less than $\mu_2$. Let $\widetilde{T}'$ be obtained from $T$
by removing a box from the first row and creating a new $(\ell+1)$th
row with a single box labelled
$k$.  Let $\mu'$ be the partition obtained from $\widetilde{\mu}'$ by
rearranging its rows in decreasing order. Correspondingly, we permute
rows of $\widetilde{T}'$ to obtain a tableau $T'$ of shape
$\mu'$. Note that $T'$ has content $\lambda'$
but might not be semi-standard.
Moreover, we may assume $\sigma(a')=\lambda'$ (namely, that
$\sigma^{-1}(k)=i+1$), so that $\theta_{T',\sigma}\colon M_{\mu'} \to
G(A)_{d'}$ is determined by $\{t_{\mu'}\}\mapsto \sum_{S'\sim
  \sigma^{-1}T'}\alpha(S')$.  In fact, the tableau $\sigma^{-1}T'$ has
content $a'$ and, by construction, to each $S\sim \sigma^{-1}T$ there
is a uniquely corresponding tableau $S'\sim \sigma^{-1}T'$ whose rows
are the same as in $S$, except for the shorter row of $1$'s and the
additional $(\ell+1)$th row (the rows might have been rearranged).  In
particular, every $\alpha(S')$ is divisible by $x_{i,n}$ and so there
exists $\tau\in S_n$ acting on $G(A)_{d'}$
such that $\tau\alpha(S')=x_{i,j}\alpha(S)$, for all corresponding
$S'$ and $S$.  This shows that
$x_{i,j}\theta_{T,\sigma}(t_\mu)=\tau\theta_{T',\sigma}(t_{\mu'})$,
and more generally, that if $\pi\in S_n$ and
$x_{i,j}\theta_{T,\sigma}(\pi t_\mu)$ is non-zero, then it is of the
form $\rho_\pi\theta_{T',\sigma}(t_{\mu'})$ with $\rho_\pi\in S_n$.
We therefore obtain a commutative diagram
\begin{equation}\label{eqn:j-small-socle}
  \begin{gathered}
    \xymatrix{
      G(A)_d\ar[r]^-{\cdot x_{i,j}} & G(A)_{d'}\\
      M_\mu\ar[u]^-{\theta_{T,\sigma}}\ar[r]^-{\chi_{i,j}}
      & M_{\mu'}\ar[u]_-{\tau\theta_{T',\sigma}}.
    }
  \end{gathered}
\end{equation}
where $\chi_{i,j}(\pi t_{\mu})=0$ if $\pi^{-1}(j)>\lambda_1$, and
$\chi_{i,j}(\pi t_{\mu})=\tau^{-1}\rho_\pi t_{\mu'}$ otherwise.
Since $M_\mu$ has a basis given by the elements $\pi t_\mu$,
this yields a well-defined map $\chi_{i,j}$. Note that
$\chi_{i,j}(t_\mu)=t_{\mu'}$ and that $\chi_{i,j}$ is merely a vector
space map which is not $S_n$-equivariant (just as multiplication by
$x_{i,j}$ is not $S_n$-equivariant).

We now turn to the ``moreover'' statement in Theorem
\ref{thm:socles-gc}.

\begin{proposition}\label{prop:socles-of-minimal-deg}
  Let $n$, $D$, $d$, and $\mu$ be as in Theorem \ref{thm:socles-gc}.
  Then every copy of $V_\mu\subset G(A)_d$ is contained in the socle.
\end{proposition}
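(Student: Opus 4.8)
The plan is to show directly that multiplication by each variable $x_{i,j}$ kills every copy of $V_\mu\subset G(A)_d$, where $\mu=(D,D)$ for $n$ even and $\mu=(D-1,D-1,1)$ for $n$ odd. By the commutative diagram \eqref{eqn:j-small-socle}, it suffices to understand the composite $M_\mu \xrightarrow{\chi_{i,j}} M_{\mu'}$ restricted to the canonical copy $V_\mu\subset M_\mu$: if $\chi_{i,j}(V_\mu)$ lands in the kernel of $\tau\theta_{T',\sigma}$, then $x_{i,j}$ annihilates $\theta_{T,\sigma}(V_\mu)$. The first step is therefore to compute the shape $\mu'$ obtained from $\mu$ by the recipe (remove a box from row $1$, add a new bottom row with one box, then reorder rows into a partition). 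For $n$ even, $\mu=(D,D)$ has $\mu_1=\mu_2=D$, so $\widetilde{\mu}'=(D-1,D,1)$, which reorders to $\mu'=(D,D-1,1)$; for $n$ odd, $\mu=(D-1,D-1,1)$ gives $\widetilde{\mu}'=(D-2,D-1,1,1)$, reordering to $\mu'=(D-1,D-2,1,1)$. In both cases one checks $\mu'_1 < \lambda'_1$: indeed $\lambda'_1 = \lambda_1 - 1$ where $\lambda_1 = n - D = D$ (for $n$ even) or $n-D = D-1$ (for $n$ odd, since $D = (n+1)/2$), so $\lambda'_1 = D-1$ or $D-2$ respectively — wait, this needs care, so the real content is the following observation.

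The key point is that $\theta_{T',\sigma}$ is built from a tableau $T'$ of content $\lambda'$ and shape $\mu'$, and the map $\theta_{T'}\colon M_{\mu'}\to M_{\lambda'}$ is nonzero on $V_{\mu'}$ only when $\mu'_1 = \lambda'_1$; more precisely, $\theta_{T,\sigma}(V_\mu)$ picks out the copy of $V_\mu$ inside $G(A)_d$, and after multiplying by $x_{i,j}$ we obtain an element of $G(A)_{d'}$ living inside the image of $\theta_{T',\sigma}$. Since $\mu=(D,D)$ (resp.\ $(D-1,D-1,1)$) is the \emph{largest} partition in dominance order among those $\mu\vartriangleright\lambda$ with $\mu_1=\lambda_1$ that can occur — because $\lambda$ has all parts $\le D$ and $\lambda_1 = D$ or $D-1$ — after the box-move operation the resulting shape $\mu'$ violates the constraint needed for $V_{\mu'}$ (or more precisely $V_\mu$-type representations) to survive in $G(A)_{d'}$. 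Concretely, I would argue that $\mu'$ fails $\mu' \vartriangleright \lambda'$ with $\mu'_1 = \lambda'_1$: the partition $\lambda'$ associated to $d'$ has $\lambda'_1 = \lambda_1 - 1$, and one verifies $\mu'_1 = \mu_1$ (the reordering puts $\mu_2 = \mu_1$ or $\mu_1 - 1$ appropriately back on top) while $\lambda'_1 < \mu'_1$, so no copy of the relevant irreducible lands in the target; hence $\theta_{T',\sigma}$ kills the image of $\chi_{i,j}(V_\mu)$, giving $x_{i,j}\cdot\theta_{T,\sigma}(V_\mu) = 0$.

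The main obstacle will be pinning down exactly which $S_n$-equivariant statement to invoke: the diagram \eqref{eqn:j-small-socle} is only a vector-space diagram, and $\chi_{i,j}$ is not equivariant, so I cannot simply say ``$V_\mu$ has no image because the target has no $V_\mu$.'' Instead I expect to need the finer structure: $\chi_{i,j}$ sends $t_\mu \mapsto t_{\mu'}$, and the canonical $V_\mu\subset M_\mu$ is the image of the Young symmetrizer, so $\chi_{i,j}(V_\mu)$ is spanned by elements $\chi_{i,j}(c_\mu \cdot \pi t_\mu)$; I would trace through how the column-antisymmetrizer piece of $c_\mu$ interacts with the box-move. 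The cleanest route is probably to observe that, by the structure theorem \cite[Theorem 27]{bhargava-satriano}, $G(A)_{d'} = \bigoplus_{\nu\vartriangleright\lambda',\,\nu_1=\lambda'_1} K_{\nu\lambda'}V_\nu$, and then show that $x_{i,j}\cdot\theta_{T,\sigma}(V_\mu)$, being a quotient of a single $\theta_{T',\sigma}(V_{\mu'})$-type object with $\mu'_1 > \lambda'_1$, must vanish because the map $M_{\mu'}\to M_{\lambda'}\to G(A)_{d'}$ factors through $M_{\lambda'}$ and $\theta_{T'}$ is the zero map on $V_{\mu'}$ when $\mu'_1\neq\lambda'_1$ (by the semistandardness criterion, \cite[Theorem 2.10.1]{sagan} combined with the discussion preceding the proposition). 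Verifying this arithmetic — that $\mu'_1 > \lambda'_1$ holds for \emph{every} choice of $i$ and every valid $d$ with $\sum_i d_i = D$ — is the one genuinely computational point, and I would handle the even and odd cases separately using $D = \lceil n/2\rceil$.
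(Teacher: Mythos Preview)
Your approach is essentially the paper's: use the commutative diagram \eqref{eqn:j-small-socle}, compute $\mu'$, verify $\mu'_1>\lambda'_1$, and conclude. Your computations of $\mu'$ are correct (for even $n$, $\mu'=(D,D-1,1)$; for odd $n$, $\mu'=(D-1,D-2,1,1)$), as is the key inequality $\mu'_1=\mu_1=\lambda_1>\lambda_1-1=\lambda'_1$, once one observes---as the paper does---that either $G(A)_{d'}=0$ outright, or the constraint $n-\sum_k d'_k\geq d'_k$ forces $\lambda'_1=\lambda_1-1$.

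Where you overcomplicate matters is the third paragraph. Your worry that $\chi_{i,j}$ is not $S_n$-equivariant, and hence that you must trace $\chi_{i,j}(V_\mu)$ through Young symmetrizers or the structure theorem, is unnecessary---and as stated your argument has a small gap, since knowing only that $\theta_{T',\sigma}$ kills $V_{\mu'}$ does not suffice when $\chi_{i,j}(V_\mu)$ need not lie in $V_{\mu'}$. The fix is that the result you are reaching for, \cite[Proposition~35]{bhargava-satriano}, is stronger than you state: it says the \emph{entire map} $\theta_{T',\sigma}\colon M_{\mu'}\to G(A)_{d'}$ is zero whenever $\mu'_1>\lambda'_1$, not merely its restriction to $V_{\mu'}$. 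With this, the diagram gives $x_{i,j}\cdot\theta_{T,\sigma}(M_\mu)=\tau\theta_{T',\sigma}(\chi_{i,j}(M_\mu))=0$ for all of $M_\mu$, and the equivariance issue never arises. This is precisely the paper's argument, and your ``one genuinely computational point'' disappears once you invoke the stronger form.
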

\begin{proof}
  Let $\lambda=(\lambda_1,\dots,\lambda_\ell)$ be the partition, and
  $a$ the ordered partition, associated to $d$. By definition of $d$
  and $D$, we necessarily have $\mu_1=\lambda_1=a_1$. Then every copy
  of $V_\mu$ in $G(A)_d$ arises as the image
  $\theta_{T,\sigma}(V_\mu)$ under the map $\theta_{T,\sigma}\colon
  M_\mu\to G(A)_d$ for some semi-standard tableau $T$ of shape $\mu$
  and content $\lambda$, and permutation $\sigma$ with
  $\sigma(a)=\lambda$. It is therefore enough to show that
  $\theta_{T,\sigma}(M_\mu)$ is contained in the socle.

Consider the commutative diagram
  \eqref{eqn:j-small-socle}. We may assume $n-\sum_k d'_k\geq d'_k$
  for all $k$, otherwise $G(A)_{d'}=0$. Letting $\lambda'$ be the
  partition associated to $d'$, we therefore have $\lambda' =
  (\lambda_1-1,\lambda_2,\dotsc,
  \lambda_{k-1},\lambda_k+1,\lambda_{k+1},\dotsc,\lambda_\ell)$ where
  $k$ is the least integer for which
  $\lambda_k=d_{i}$. On the other hand, $\mu'=(D,D-1,1)$ if $n$ is
  even and $\mu'=(D-1,D-2,1,1)$ if $n$ is odd; note $D-1\geq1$ if $n$
  is even and $D-2\geq1$ if $n$ is odd since $n\geq4$. Thus, we must
  have $\mu'_1=\mu_1=\lambda_1>\lambda'_1$, which implies, by
  \cite[Proposition 35]{bhargava-satriano} that the image of
  $\theta_{T',\sigma}$ is zero. We have therefore shown that
  $\theta_{T,\sigma}(M_\mu)$ lives in the socle.
\end{proof}

Let $\fmm$ denote the ideal of $\mathscr{R}$ generated by the
$x_{i,j}$. The following proposition plays a key role in the remainder
of this section.

\begin{proposition}
  \label{prop:no-socles-of-smaller-deg-improved-version}
  Let $d=(d_1,\dotsc,d_{n-1})$ and suppose $d_p =
    d_{p+1}=\dotsb=d_{n-1}=0$.
  for some $1\le p<n$. Let
$d'=(d_1,\dotsc,d_{p-1},1,0,\dotsc,0)$, let $\lambda$ be
  the partition associated to $d$ and $\lambda'$ be the partition
  associated to $d'$. Let $\nu=(\nu_1,\dotsc,\nu_\ell)$ be a partition
  with $\nu\vartriangleright\lambda$ and $\lambda_1=\nu_1>\nu_2$. Let
  $T$ be a semi-standard tableau of shape $\nu$ and content $\lambda$,
  and let $T'$ be obtained from $T$ by removing a box from the first
  row and creating a new $(\ell+1)$th row with a single box labelled
  $p+1$.

  Then $\nu'=(\nu_1-1,\nu_2,\dotsc,\nu_\ell,1)$ is a partition, $T'$
  is semi-standard of shape $\nu'$ and content $\lambda'$,
  $\theta_{T',1}|_{V_{\nu'}}$ is injective, and
  $\theta_{T',1}(V_{\nu'})\subset\fmm\cdot\theta_{T,1}(V_{\nu})$.
\end{proposition}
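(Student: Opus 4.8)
\emph{The combinatorial assertions.} Since $\nu_1>\nu_2$, the sequence $\nu'=(\nu_1-1,\nu_2,\dots,\nu_\ell,1)$ is weakly decreasing, hence a partition. For $T'$, note that because $T$ is semi-standard of shape $\nu$ with $\nu_1=\lambda_1$, all $\lambda_1$ entries of $T$ equal to $1$ must lie in the first row (columns being strictly increasing), so the first row of $T$ is constant equal to $1$; and because $d_m=\dots=d_{n-1}=0$ forces $\lambda_{m+1}=\dots=\lambda_n=0$, no entry of $T$ exceeds $m$. Hence deleting the rightmost box of the first row of $T$ (a box carrying the label $1$) and adjoining a new $(\ell+1)$st row consisting of a single box labelled $m+1$ produces a tableau $T'$ of shape $\nu'$ which is still semi-standard --- its rows remain weakly increasing, and the only new strict-column comparison is against an entry of $T$, which is $\le m<m+1$ --- and a direct count of entry multiplicities shows $T'$ has content $\lambda'$. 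Finally $\nu'_1=\nu_1-1=\lambda_1-1=\lambda'_1$, so \cite[Proposition 35]{bhargava-satriano}, applied to the semi-standard tableau $T'$, shows $\theta_{T',1}|_{V_{\nu'}}$ is injective.

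\emph{Reductions for the containment $\theta_{T',1}(V_{\nu'})\subseteq\fmm\cdot\theta_{T,1}(V_\nu)$.} As $\fmm$ is an $S_n$-stable ideal and $S_n$ acts on $G(A)$ by algebra automorphisms, $\fmm\cdot\theta_{T,1}(V_\nu)$ is an $S_n$-submodule of $G(A)$. It is multigraded, multiplication by $x_{i,k}$ shifts multidegree by the $i$th coordinate vector, and $d'$ differs from $d$ only in the $m$th coordinate; hence
\[
\fmm\cdot\theta_{T,1}(V_\nu)\cap G(A)_{d'}=\sum_{k=1}^{n}x_{m,k}\cdot\theta_{T,1}(V_\nu)=:W.
\]
On the other side, $V_{\nu'}$ occurs in $M_{\nu'}$ with multiplicity one (as $K_{\nu'\nu'}=1$) and $\theta_{T',1}$ is injective on that isotypic line; it follows that $\image(\theta_{T',1})$ contains a \emph{unique} copy of $V_{\nu'}$, namely $\theta_{T',1}(V_{\nu'})$. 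Therefore it suffices to prove (a) $W\subseteq\image(\theta_{T',1})$ and (b) $W$ has nonzero $V_{\nu'}$-isotypic component: for then this component, being a nonzero $S_n$-submodule of the irreducible $\theta_{T',1}(V_{\nu'})$, equals it, giving $\theta_{T',1}(V_{\nu'})\subseteq W\subseteq\fmm\cdot\theta_{T,1}(V_\nu)$.

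\emph{Proof of (a).} Apply the commutative square \eqref{eqn:j-small-socle} with $i=m$ and $\sigma=1$. Since $d_m=0$, the index called $k$ in the discussion preceding \eqref{eqn:j-small-socle} is here $m+1$, and the tableau built there from $T$ agrees with the present $T'$ (no row reordering is needed because $\nu_1>\nu_2$). The square gives, for each $k$ and each $v\in V_\nu\subseteq M_\nu$, an identity $x_{m,k}\theta_{T,1}(v)=\tau\,\theta_{T',1}(\chi_{m,k}(v))$ for some $\tau\in S_n$; since $\chi_{m,k}(v)\in M_{\nu'}$ and $\image(\theta_{T',1})=\theta_{T',1}(M_{\nu'})$ is $S_n$-stable, the right-hand side lies in $\image(\theta_{T',1})$. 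Summing over $k$ yields $W\subseteq\image(\theta_{T',1})$.

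\emph{Proof of (b): the main obstacle.} By the injectivity just established, $\theta_{T',1}$ does not annihilate the generating polytabloid of $V_{\nu'}$, so $\theta_{T',1}(V_{\nu'})\ne0$. To see that $W$ meets $\theta_{T',1}(V_{\nu'})$, one expands this polytabloid using its defining column anti-symmetrizer and runs each tabloid through \eqref{eqn:j-small-socle}, thereby writing a nonzero element of $\theta_{T',1}(V_{\nu'})$ as an $S_n$-twist of a specific element of $W$. The delicate points are that (i) the bookkeeping coefficients arising in this expansion land in $V_\nu$, not merely in $M_\nu$, and (ii) the resulting element of $W$ is nonzero; for (i) one uses that every entry of $T$ is $\le m$, so that multiplying a monomial $\alpha(S)=\prod_i x_{S(i)-1,i}$ by an $x_{m,k}$ with $k\le\nu_1$ never meets a square relation, and for (ii) one invokes a nonvanishing statement in $G(A_n)$ of precisely the kind proved in Lemmas \ref{l:partition-shape-02111111-non-zero-elmt} and \ref{l:partition-shape-02011111-non-zero-elmt}. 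This last step --- exhibiting a concrete nonzero element of $\theta_{T',1}(V_{\nu'})\cap\fmm\cdot\theta_{T,1}(V_\nu)$ --- is where the real work of the proof is concentrated.
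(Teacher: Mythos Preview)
Your combinatorial assertions are fine and match the paper's.

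For the containment, you have overcomplicated the argument and misidentified where the difficulty lies. Your step~(b), if actually carried out, \emph{is} the entire proof: once you write $\theta_{T',1}(e_t)$ as a $\kk$-linear combination of elements $x_{m,k}\,\theta_{T,1}(v)$ with $v\in V_\nu$, you have shown $\theta_{T',1}(e_t)\in W$ directly, and since the polytabloids $e_t$ span $V_{\nu'}$, the containment $\theta_{T',1}(V_{\nu'})\subseteq W$ follows. Your step~(a) and the multiplicity-one uniqueness argument are then superfluous.

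The paper carries out exactly the computation you sketch in~(b): for a standard tableau $t$ of shape $\nu'$, one builds explicit tableaux $t_1,\dotsc,t_{\ell+1}$ of shape $\nu$ (by moving the label from the $(\ell+1)$th row back into the first column) and proves the identity
\[
\tau\,\theta_{T',1}(e_t)\;=\;\sum_{i=1}^{\ell+1}(-1)^{\ell+1-i}\,x_{m,\tau(c_i)}\,\theta_{T,1}(e_{t_i}),
\]
via a bijection between the column stabilizer of $t$ and the disjoint union of those of the $t_i$. This is where the work is, but your diagnosis of the ``delicate points'' is off. Point~(i) is immediate: the $e_{t_i}$ are polytabloids of shape $\nu$, hence lie in $V_\nu$ by definition; your remark about ``never meeting a square relation'' is not what is at stake. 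Point~(ii) is also automatic: the left-hand side is nonzero because $\theta_{T',1}|_{V_{\nu'}}$ is injective (which you have already established), so no separate nonvanishing argument is needed, and Lemmas~\ref{l:partition-shape-02111111-non-zero-elmt} and~\ref{l:partition-shape-02011111-non-zero-elmt} play no role here. The genuine content is the combinatorial bookkeeping that yields the displayed identity, which you have not supplied.
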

\begin{proof}
  Since $\nu_1>\nu_2$, it is clear that $\nu'$ is a partition. Next,
  since $\nu\vartriangleright\lambda$ and $\lambda_1=\nu_1>\nu_2$, we
  also have $\lambda_1>\lambda_2$. As a result,
  $\lambda'_1=\lambda_1-1$. It is then immediate that $T'$ is
  semi-standard of shape $\nu'$ and content $\lambda'$. Because
  $\lambda'_1=\nu'_1$, the structure result \cite[Theorem
    27]{bhargava-satriano} tells us that $\theta_{T',1}|_{V_{\nu'}}$
  is injective.

  Consider the maps $\chi_{p,j}$ constructed in
  \eqref{eqn:j-small-socle}. To finish the proof, it suffices to show
  that
  $V_{\nu'}$ is in the span of $\{\chi_{p,j}(V_\nu)\mid j\geq1\}$.

  Let $\{t\}\in M_{\nu'}$ be a standard tabloid.  Let $c_i$ denote the
  label appearing in the first box of the $i$th row of $t$ (so if
  $\{t\}=\{t_{\nu'}\}$ is the standard generator of $M_{\nu'}$, then
  $c_1=1$ and $c_i=\nu_1+\dotsb+\nu_{i-1}$, for $1<i\le \ell+1$) and
  let $t_i$ be the tableau of shape $\nu$ defined as follows: its
  labels are the same as those of $t$ except in the first and last
  column; in the first column, the labels from top to bottom are
  $c_1,\dotsc,c_{i-1},c_{i+1},\dotsc,c_{\ell+1}$ and the label in the
  unique box of the last column is given by $c_i$.  Consider the
  polytabloid $e_t\in V_{\nu'}$ associated to $t$ (see
  \cite[Definition 2.3.2, Theorem 2.5.2]{sagan}).  Recall that $e_t$
  is a signed sum of tabloids obtained from $t$ by permuting labels
  within columns.  Extending the map $\alpha$ from
  \eqref{eqn:def-of-alpha} by linearity, it makes sense to evaluate
  $\alpha$ on a polytabloid. Let $C_t$ and $C_{t_i}$ denote the
  column-stabilizers of $t$ and $t_i$, respectively.
  We obtain the equality
  \begin{align*}
    \theta_{T'}(e_t) &= \theta_{T'}\left( \sum_{\pi\in
      C_t}(\sgn\pi)\pi\{t\} \right) = \sum_{\pi\in
      C_t}(\sgn\pi)\pi\theta_{T'}(\{t\}) = \sum_{\pi\in
      C_t}(\sgn\pi)\pi \left( \sum_{S'\sim T'}S' \right) \\ &=
    \sum_{\pi\in C_t} \sum_{S'\sim T'} (\sgn\pi)\pi\boldsymbol{\cdot}
    S'
  \end{align*}
  and thus
  \[
  \theta_{T',1}(e_t) = \sum_{\pi\in C_t} \sum_{S'\sim T'}
  (\sgn\pi)\alpha(\pi\boldsymbol{\cdot} S') \in G(A)_{d'},
  \]
  where $\pi\in S_n$ acts on a generalized tableau $S'$ here by
  permuting positions, as in \cite[Proposition 2.9.2]{sagan}.  There
  is a natural bijection $C_i=\{ \pi\in C_t \mid \pi(n)=c_i \} \cong
  C_{t_i}, \pi\leftrightarrow\rho$, obtained by requiring that the
  labels of $\pi t$ and $\rho t_i$ coincide where their shapes
  intersect (in the sense of \cite[Definition 5.1.2]{sagan}).
  Breaking up this sum according to which $c_i$ appears in the $n$th
  box of $\nu'$, we obtain
  \begin{align*}
    \theta_{T',1}(e_t) &= \sum_{i=1}^{\ell+1}\sum_{\pi\in
      C_i}\sum_{S'\sim T'} (\sgn\pi)\alpha(\pi\boldsymbol{\cdot} S')
    = \sum_{i=1}^{\ell+1}\sum_{\pi\in C_i}\sum_{S'\sim T'} (\sgn\pi)
    \prod_{j=1}^n x_{S'(\pi^{-1}j)-1,j} \\ &=
    \sum_{i=1}^{\ell+1}\sum_{\pi\in C_i}\sum_{S'\sim T'} (\sgn\pi)
    x_{p,c_i}\prod_{\substack{j=1\\j\neq c_i}}^n
    x_{S'(\pi^{-1}j)-1,j} \\ &=
    \sum_{i=1}^{\ell+1}(-1)^{\ell+1-i}x_{p,c_i} \sum_{\rho\in
      C_{t_i}}\sum_{S'\sim T'} (\sgn\rho)
    \prod_{\substack{j=1\\j\neq c_i}}^n x_{S'(\pi^{-1}j)-1,j}.
  \end{align*}
  On the other hand, we have
  \begin{align*}
    \theta_{T,1}(e_{t_i}) &= \sum_{\rho\in C_{t_i}} \sum_{S\sim T}
    (\sgn\rho)\alpha(\rho\boldsymbol{\cdot} S) = \sum_{\rho\in
      C_{t_i}} \sum_{S\sim T} (\sgn\rho) \prod_{j=1}^n
    x_{S(\rho^{-1}j)-1,j} \in G(A)_d.
  \end{align*}
  Using the obvious bijection $\{S'\mid S'\sim T'\} \cong \{S\mid
  S\sim T\}$ and applying the appropriate permutation $\tau$ to
  reposition labels finally gives
  \[
  \tau\theta_{T',1}(e_t) =
  \sum_{i=1}^{\ell+1}(-1)^{\ell+1-i}x_{p,\tau(c_i)}
  \theta_{T,1}(e_{t_i}).
  \]
  Hence, this shows that $e_t$ is in the span of
  $\{\chi_{p,j}(V_\nu)\mid j\geq1\}$, 
  finishing the proof.
\end{proof}

Theorem \ref{thm:socles-gc} subsequently follows from the corollary below.

\begin{corollary}\label{cor:no-socles-of-smaller-deg}
  Let $n$ and $D$ be as in Theorem \ref{thm:socles-gc}. If
  $d=(d_1,\dots,d_{n-1})$ satisfies the inequality $\sum_k d_k<D$,
  then $G(A)_d$ contains no non-trivial socle elements.
\end{corollary}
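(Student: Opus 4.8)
The plan is to pin down a convenient index $m$ and prove that the multiplication operators $w\mapsto x_{m,j}w$ ($1\le j\le n$) have no common kernel on $G(A)_d$; then no nonzero $w\in G(A)_d$ is annihilated by $\fmm$, which is the assertion. For the setup: $G(A)$ carries an action of $S_{n-1}$ by ring automorphisms $x_{i,j}\mapsto x_{\rho(i),j}$ (this permutes the generators $e_1(x_i)$ and the ideals $(x_{1,j},\dots,x_{n-1,j})^2$), which permutes multidegrees and preserves the socle, so I may assume $d_1\ge\dots\ge d_{n-1}$. If $G(A)_d=0$ there is nothing to show, so by \cite[Lemma 31]{bhargava-satriano} assume $n-\sum_k d_k\ge d_j$ for all $j$. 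From $\sum_k d_k<D=\lceil n/2\rceil$ one gets, checking the two parities of $n$, that $\sum_k d_k<n/2$; hence, writing $\lambda=(\lambda_1,\dots,\lambda_\ell)$ for the partition associated to $d$, $\lambda_1=n-\sum_k d_k>\sum_k d_k\ge d_1=\lambda_2$, so $\lambda_1>\lambda_2$. Let $m$ be the least index with $d_m=0$ (so $d_m=\dots=d_{n-1}=0$, and $d_{m-1}\ge1$ if $m>1$); then $m<n$, since otherwise $\sum_k d_k\ge n-1\ge D$. Put $d'=(d_1,\dots,d_{m-1},1,0,\dots,0)$ with associated partition $\lambda'$; since $\lambda_1-1\ge d_1$ one computes $\lambda'=(\lambda_1-1,\lambda_2,\dots,\lambda_\ell,1)$, and $\sigma=1$ satisfies both $\sigma(a)=\lambda$ and $\sigma(a')=\lambda'$ with $\sigma^{-1}(m+1)=m+1$. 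Thus the maps $\theta_{T,1},\theta_{T',1}$ and the diagram \eqref{eqn:j-small-socle} are available with $\sigma=1$ and with $T'$ formed using the label $m+1$, exactly the situation of Proposition~\ref{prop:no-socles-of-smaller-deg-improved-version}.

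The degree-$d$ part of the socle is an $S_n$-subrepresentation of $G(A)_d$, so it suffices to show it contains no irreducible summand; equivalently, for each $\mu\vartriangleright\lambda$ with $\mu_1=\lambda_1$ (the $\mu$ occurring in $G(A)_d$ by \cite[Theorem 27]{bhargava-satriano}) I will show the $V_\mu$-isotypic component $W_\mu=\bigoplus_T\theta_{T,1}(V_\mu)$ — the direct sum over semi-standard $T$ of shape $\mu$, content $\lambda$, which by the structure theorem exhausts $W_\mu$ — contains no $w\ne0$ with $x_{m,j}w=0$ for all $j$. Since $\mu_2\le n-\mu_1=\sum_k d_k<\mu_1$, we have $\mu_1>\mu_2$, so Proposition~\ref{prop:no-socles-of-smaller-deg-improved-version} applies with $\nu=\mu$: for every such $T$ the tableau $T'$ obtained by removing a box from the first row and adjoining a one-box bottom row labeled $m+1$ is semi-standard of shape $\mu'=(\mu_1-1,\mu_2,\dots,\mu_\ell,1)$ and content $\lambda'$, the map $\theta_{T',1}|_{V_{\mu'}}$ is injective, and the proof of that proposition in fact gives the sharper containment $\theta_{T',1}(V_{\mu'})\subseteq\sum_{j=1}^n x_{m,j}\,\theta_{T,1}(V_\mu)$ (only the $x_{m,j}$ are needed, via the maps $\chi_{m,j}$).

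To conclude: by \eqref{eqn:j-small-socle}, $x_{m,j}\,\theta_{T,1}(M_\mu)\subseteq\theta_{T',1}(M_{\mu'})$, so post-composing with the $S_n$-equivariant projection $p_{\mu'}$ of $G(A)_{d'}$ onto its $V_{\mu'}$-isotypic component lands inside $\theta_{T',1}(V_{\mu'})$, because the $V_{\mu'}$-isotypic part of $M_{\mu'}$ is exactly its Specht submodule. Moreover $T\mapsto T'$ is a bijection onto the semi-standard tableaux of shape $\mu'$, content $\lambda'$ (both have first row all $1$'s and the extra box forced to be $m+1$), so by \cite[Theorem 27]{bhargava-satriano} the subspaces $\theta_{T',1}(V_{\mu'})$, as $T$ varies, are linearly independent in $G(A)_{d'}$. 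Hence if $w=\sum_T\theta_{T,1}(v_T)\in W_\mu$ has all $x_{m,j}w=0$, applying $p_{\mu'}$ and invoking independence forces $p_{\mu'}(x_{m,j}\theta_{T,1}(v_T))=0$ for every $T$ and $j$; for fixed $T$ the map $V_\mu\to\theta_{T',1}(V_{\mu'})^{\oplus n}$, $v\mapsto(p_{\mu'}(x_{m,j}\theta_{T,1}(v)))_j$, is equivariant for the $S_n$-action on $\theta_{T',1}(V_{\mu'})\otimes\kk^n$ (diagonal, permutation action on $\kk^n$), so its kernel is an $S_n$-submodule of the irreducible $V_\mu$; it is not all of $V_\mu$, since $\theta_{T',1}(V_{\mu'})\subseteq\sum_j x_{m,j}\theta_{T,1}(V_\mu)$ is nonzero and fixed by $p_{\mu'}$. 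Therefore the map is injective, $v_T=0$ for all $T$, and $w=0$; this proves the corollary, and together with Proposition~\ref{prop:socles-of-minimal-deg} it completes Theorem~\ref{thm:socles-gc}. The main obstacle is the bookkeeping that allows $\sigma=1$ to be used uniformly — precisely where the hypothesis $\sum_k d_k<\lceil n/2\rceil$ is used, through $\lambda_1-1\ge d_1$ — together with the verification that composing with $p_{\mu'}$ lands in the \emph{single} copy $\theta_{T',1}(V_{\mu'})$ rather than spilling into more dominant constituents of $\theta_{T',1}(M_{\mu'})$, which is what makes the independence argument across the $T'$ work.
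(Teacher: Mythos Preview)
Your argument is correct and follows the paper's route: reduce to isotypic components and, for each $\mu$ occurring in $G(A)_d$, feed the pair $(\mu,\lambda)$ into Proposition~\ref{prop:no-socles-of-smaller-deg-improved-version} to produce the injective map $\theta_{T',1}|_{V_{\mu'}}$ that witnesses a nonzero image under multiplication by the $x_{m,j}$. The paper's version argues by contradiction from a single canonical copy $\theta_{T,1}(V_\nu)$ lying in the socle; you are more careful, treating a general element $w=\sum_T\theta_{T,1}(v_T)$ of the isotypic block $W_\mu$ by composing with the projector $p_{\mu'}$, using that $p_{\mu'}\circ\theta_{T',1}$ has image exactly $\theta_{T',1}(V_{\mu'})$, and then invoking linear independence of the $\theta_{T',1}(V_{\mu'})$ together with the $S_n$-equivariance of $v\mapsto(p_{\mu'}(x_{m,j}\theta_{T,1}(v)))_j$ to force each $v_T=0$. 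This extra bookkeeping is warranted precisely when $K_{\mu\lambda}>1$, which does occur in the range $\sum_k d_k<D$.

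One small correction: your claim that $T\mapsto T'$ is a \emph{bijection} onto the semi-standard tableaux of shape $\mu'$ and content $\lambda'$ is false in general. The bottom box of such a tableau is not forced to carry $m+1$; for instance with $n=8$, $\mu=(5,3)$, $\lambda=(5,1,1,1)$ there is a unique $T$, but there are three semi-standard tableaux of shape $\mu'=(4,3,1)$ and content $\lambda'=(4,1,1,1,1)$. Fortunately only \emph{injectivity} of $T\mapsto T'$ is needed to conclude that the subspaces $\theta_{T',1}(V_{\mu'})$ are linearly independent (distinct $T$ give distinct semi-standard $T'$, and the structure theorem then gives independence), so the argument stands unchanged once you drop the surjectivity claim.
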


\begin{proof}
  Since the socle is an $S_n$-subrepresentation of $G(A)$, it suffices
  to prove that for all partitions $\nu$, no copy of $V_\nu$ in
  $G(A)_d$ is contained in the socle.  Suppose to the contrary. Let
  $\lambda=(\lambda_1,\dotsc,\lambda_{p})$ be the partition associated
  to $d$; note that $p-1\le \sum_id_i < D$.  Because $G(A)_d\cong
  G(A)_{(\lambda_2,\dotsc,\lambda_n)}$ as $S_n$-representations, we
  may assume that the partitions associated to $d$ satisfy
  $\lambda=a$.  Suppose there exists some
  $\nu\vartriangleright\lambda$ with $\nu_1=\lambda_1$ and some
  semi-standard tableau $T$ of shape $\nu$ and content $\lambda$ such
  that $\theta_{T,1}(V_\nu)\subset G(A)_d$ is contained in the socle.
  We will examine multiplication by $x_{p,j}$, where $p$ is the number
  of rows of $\lambda$.

  Let $\nu=(\nu_1,\dots,\nu_\ell)$ and
  $d'=(d_1,\dotsc,d_{p-1},1,0,\dotsc,0)$.
  Since $\lambda_1>n-D$, we have $\lambda_1\geq\frac{n+1}{2}$
  regardless of whether $n$ is even or odd. This implies
  $\lambda_1>\lambda_2$; indeed, otherwise
  $\lambda_1+\lambda_2=2\lambda_1>n$, a contradiction. Therefore, the
  partition $\lambda'$ associated to $d'$ satisfies
  $\lambda'_1=\lambda_1-1$. Similarly, $\nu_1=\lambda_1>n-D$ and so
  $\nu_1>\nu_2$. We may therefore apply Proposition
  \ref{prop:no-socles-of-smaller-deg-improved-version}.  By our
  assumption that $\theta_{T,1}(V_\nu)\subset G(A)_d$ is contained in
  the socle, we see that $x_{p,j}\theta_{T,1}(V_\nu)=0$ and so
  $\theta_{T',1}(\chi_{p,j}(V_\nu))=0$. However, Proposition
  \ref{prop:no-socles-of-smaller-deg-improved-version} tells us every
  non-zero element $v\in V_{\nu'}$ is in the span of
  $\{\chi_{p,j}(V_\nu)\mid j\geq1\}$. 
  This implies that
  $\theta_{T',1}(v)=0$, contradicting that fact that
  $\theta_{T',1}|_{V_{\nu'}}$ is injective.
\end{proof}

The following corollary characterizes the annihilator of the square of
the maximal ideal $\fmm$.  This will play an important role in Theorem
\ref{thm:main-higher-gc-socle} below.

\begin{corollary}\label{cor:no-annm2-of-smaller-deg}
  Let $n$ and $D$ be as in Theorem \ref{thm:socles-gc}. Let
  $d=(d_1,\dotsc,d_{n-1})$ satisfy the inequality $\sum_k d_k<D$ and
  let $\lambda$ and $a$ be the partitions associated to $d$.  If $n$
  is even, then $G(A)_d\cap \Ann(\fmm^2)=0$ holds.  If $n$ is odd, let
  $\varepsilon = (D,D-1)$; then we have
  $G(A)_d\cap\Ann(\fmm^2)\subseteq
  \sum_T\theta_{T,\sigma}(V_{\varepsilon})$, where $T$ runs over
  semi-standard Young tableaux of shape $\varepsilon$ and content
  $\lambda$, and $\sigma(a)=\lambda$.
\end{corollary}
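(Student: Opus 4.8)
The plan is to reduce to a single nonzero multidegree and then to run a two-step descent on multiplication maps, invoking Proposition \ref{prop:no-socles-of-smaller-deg-improved-version} and the structure theorem \cite[Theorem 27]{bhargava-satriano} twice.

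First I would dispose of the range $\sum_k d_k\le D-2$: if $v\in G(A)_d\cap\Ann(\fmm^2)$ then $\fmm v$ lies in multidegree components of total degree $\sum_k d_k+1<D$, so every element of $\fmm v$ is a socle element of degree $<D$; by Corollary \ref{cor:no-socles-of-smaller-deg} this forces $\fmm v=0$, so $v$ is itself a socle element of degree $<D$, whence $v=0$ by Corollary \ref{cor:no-socles-of-smaller-deg}. It then remains to treat $\sum_k d_k=D-1$. Exactly as in the proof of Corollary \ref{cor:no-socles-of-smaller-deg}, since $G(A)_d\cong G(A)_{(\lambda_2,\dotsc,\lambda_n)}$ as $S_n$-representations I may assume $a=\lambda$; let $m$ be the number of rows of $\lambda$, so that $d_m=\dotsb=d_{n-1}=0$, and note $\lambda_1=n-(D-1)$, which is $D+1$ when $n$ is even and $D$ when $n$ is odd. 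Since $\fmm^2$ is $S_n$-stable, $U:=G(A)_d\cap\Ann(\fmm^2)$ is an $S_n$-subrepresentation, so it decomposes into isotypic pieces $U=\bigoplus_\nu U_\nu$. By the structure theorem every $\nu$ occurring has $\nu_1=\lambda_1$, and since $\nu_2+\dotsb+\nu_\ell=D-1<\lambda_1$ we automatically have $\nu_1>\nu_2$. It therefore suffices to prove that $U_\nu=0$ unless $n$ is odd and $\nu=\varepsilon=(D,D-1)$.

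So suppose $U_\nu\neq0$ and choose a nonzero $S_n$-equivariant map $\psi\colon V_\nu\to U$; by the structure theorem $\psi=\sum_T c_T\,\theta_{T,1}|_{V_\nu}$, the sum over semi-standard $T$ of shape $\nu$ and content $\lambda$, with $(c_T)\neq0$. Because $\psi(V_\nu)\subseteq\Ann(\fmm^2)$, we have $x_{m+1,j'}\,x_{m,j}\,\psi(V_\nu)=0$ for all $j,j'$. Now apply the commutative square \eqref{eqn:j-small-socle} together with Proposition \ref{prop:no-socles-of-smaller-deg-improved-version} twice: first descending $\nu$ to $\nu'=(\nu_1-1,\nu_2,\dotsc,\nu_\ell,1)$ via multiplication by the $x_{m,j}$ (valid since $\nu_1>\nu_2$), then descending $\nu'$ to $\nu''=(\nu_1-2,\nu_2,\dotsc,\nu_\ell,1,1)$ via multiplication by the $x_{m+1,j'}$ (valid provided $\nu_1-1>\nu_2$). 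At each step the removed box comes from the all-$1$'s first row and the new bottom row is a single box, so the resulting tableaux $T'$ and then $T''$ are again semi-standard, of content $\lambda'=(\lambda_1-1,\lambda_2,\dotsc,\lambda_m,1)$ and of content $\lambda''$ with $\nu''_1=\lambda''_1=\lambda_1-2$. Tracking the permutations $\tau$ occurring in \eqref{eqn:j-small-socle} --- which depend only on the shapes and the chosen variable, not on $T$ --- and using that $\sum_j\chi_{m,j}(V_\nu)\supseteq V_{\nu'}$ and $\sum_{j'}\chi_{m+1,j'}(V_{\nu'})\supseteq V_{\nu''}$ (from the proof of Proposition \ref{prop:no-socles-of-smaller-deg-improved-version}), the vanishing $x_{m+1,j'}x_{m,j}\psi(V_\nu)=0$ propagates to $\bigl(\sum_T c_T\,\theta_{T'',1}\bigr)|_{V_{\nu''}}=0$. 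But distinct $T$ yield distinct semi-standard tableaux $T''$ of shape $\nu''$ and content $\lambda''$ with $\nu''_1=\lambda''_1$, so by the structure theorem the maps $\theta_{T'',1}|_{V_{\nu''}}$ are linearly independent; hence all $c_T=0$, contradicting $\psi\neq0$.

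Finally I would read off the numerics. Since $\nu_2+\dotsb+\nu_\ell=D-1$, we have $\nu_2\le D-1$. If $n$ is even then $\nu_1=D+1$ and $\nu_1-1=D>D-1\ge\nu_2$, so the second descent always applies, whence $U_\nu=0$ for every $\nu$ and $G(A)_d\cap\Ann(\fmm^2)=0$. If $n$ is odd then $\nu_1=D$, so $\nu_1-1=D-1\ge\nu_2$ with equality precisely when $\nu_2=D-1$, i.e.~when $\nu=(D,D-1)=\varepsilon$; for every other $\nu$ the second descent applies and $U_\nu=0$, so $U$ lies in the $V_\varepsilon$-isotypic component of $G(A)_d$, which is $\sum_T\theta_{T,\sigma}(V_\varepsilon)$ over semi-standard $T$ of shape $\varepsilon$ and content $\lambda$ with $\sigma(a)=\lambda$. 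The main obstacle will be the bookkeeping in the previous paragraph: one must verify that the auxiliary permutations and the vector-space maps $\chi_{i,j}$ of \eqref{eqn:j-small-socle} are independent of the tableau $T$ (so that the descent is coherent across the basis $\{\theta_{T,1}\}$ of $\Hom_{S_n}(V_\nu,G(A)_d)$), and that the doubly-descended tableaux $T''$ are honestly semi-standard of content $\lambda''$ with $\nu''_1=\lambda''_1$, which is what is needed for the final appeal to linear independence.
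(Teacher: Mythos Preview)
Your approach is essentially the paper's: reduce to the isotypic decomposition and then apply Proposition~\ref{prop:no-socles-of-smaller-deg-improved-version} twice to descend from $\nu$ to $\nu''=(\nu_1-2,\nu_2,\dotsc,\nu_\ell,1,1)$, using $\nu_1''=\lambda_1''$ to invoke injectivity from the structure theorem. The paper does this uniformly for all $\sum_k d_k<D$, observing directly that $\nu_1>\nu_2+1$ unless $n$ is odd and $\nu=\varepsilon$; your preliminary reduction to total degree $D-1$ via Corollary~\ref{cor:no-socles-of-smaller-deg} is a pleasant shortcut but not needed.

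One point where you are more explicit than the paper: you track a general copy $\psi=\sum_T c_T\,\theta_{T,1}$ of $V_\nu$ rather than a single $\theta_{T,1}(V_\nu)$, and deduce $c_T=0$ from linear independence of the descended maps $\theta_{T'',1}|_{V_{\nu''}}$. The paper only shows that each individual $\theta_{T,1}(V_\nu)$ is not contained in $\Ann(\fmm^2)$, which strictly speaking does not rule out linear combinations when $K_{\nu\lambda}>1$; your version closes that gap. The bookkeeping you flag (that $\tau$ and $\chi_{i,j}$ depend only on shapes, and that $T\mapsto T''$ preserves semi-standardness with $\nu_1''=\lambda_1''$) is exactly what is needed and follows from the construction in Proposition~\ref{prop:no-socles-of-smaller-deg-improved-version}; note in particular that whenever $\nu\neq\varepsilon$ one also has $\lambda_1-2\geq\lambda_2$, so $a''=\lambda''$ and the second application with $\sigma=1$ is legitimate.
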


\begin{proof}
  Let $\lambda = (\lambda_1,\dotsc,\lambda_p)$.  As
  $G(A)_d\cap\Ann(\fmm^2)$ is a subrepresentation, it suffices to
  determine which $V_{\nu}$'s are contained in the intersection;
  again, we may assume $a=\lambda$.  Let $\nu =
  (\nu_1,\dotsc,\nu_{\ell})\vartriangleright\lambda$ with
  $\nu_1=\lambda_1$ and let $T$ be semi-standard of shape $\nu$ and
  content $\lambda$.  Because $\sum_kd_k<D$, we have $\nu_1>\nu_2+1$,
  unless $n$ is odd and $\nu=\varepsilon.$

  If $\nu_1>\nu_2+1$, then $\lambda_1>\lambda_2+1$ as well.  In this
  case, noting that $n\ge 4$ and $p-1<D$, we see
  $\lambda'=(\lambda_1-2,\lambda_2,\lambda_3,\dotsc,\lambda_p,1,1)$
  and $\nu'=(\nu_1-2,\nu_2,\nu_3,\dotsc,\nu_{\ell},1,1)$ are
  partitions.  Let $T'$ be the semi-standard tableau obtained from $T$
  by labelling the boxes on the $(\ell+1)$th and $(\ell+2)$th rows
  with $p+1$ and $p+2$, respectively.  Applying Proposition
  \ref{prop:no-socles-of-smaller-deg-improved-version} twice, we see
  that $\theta_{T',1}(V_{\nu'})\subseteq \fmm^2\theta_{T,1}(V_{\nu})$.
  Because $\theta_{T',1}|_{V_{\nu'}}$ is injective, we find that
  $\theta_{T,1}(V_{\nu})$ cannot be contained in $\Ann(\fmm^2)$.
\end{proof}

\section{Trivial negative tangents for quotients by minimal degree socles}
\label{TNT-min-socs}

In this section, we prove Theorem
\ref{thm:main-higher-gc-socle} and Corollary
\ref{cor:main-higher-gc-socle}, giving a formula for how large we may take the value of $r$. We begin with two examples showing that condition \ref{socleodd} in Theorem
\ref{thm:main-higher-gc-socle} is neither vacuous or superfluous.

\begin{example}
  \label{eg:n-equals-5}
One checks via computer that we may quotient $G(A_4)=G^{(5)}(A_4)$ by all copies of $V_\mu$ in $G(A_4)_d$ for all $d$ whose associated partition is $(1,1,1)$, i.e., in Theorem \ref{thm:main-higher-gc-socle}, we may take $n=5$, $m=4$, and $r=40$.
\end{example}

\begin{example}
  \label{eg:n-equals-7}
  Let $J$ be the ideal of $G(A_6)=G^{(7)}(A_6)$ generated by all socle
  elements in degree $D=4$.  Let $T$ be the tableau of shape
  $\varepsilon=(4,3)$ and content $\lambda=(4,1,1,1)$, so the entries
  of $T$ are $1,1,1,1,2,3,4$ read left-to-right and top-down.  Let $t$
  be the standard tableau of shape $\varepsilon$ whose $i$th
    box has label $i$.  One can check by
  computer that $\theta_{T,1}(e_t)$ yields a non-zero socle element of
  $B$ in degree $3$.
\end{example}

In order to prove that $B$ has trivial negative tangents, we compare $T^1(B/\kk,B)$ to $T^1(G^{(n)}(A_m)/\kk,B)$ and $T^1(B/G^{(n)}(A_m),B)$. The following gives the appropriate vanishing result we need for $T^1(G^{(n)}(A_m)/\kk,B)_{<0}$.

\begin{proposition}\label{prop:TNT-TAkB}
  Let $n\ge 4$ and $m\geq2$.  
  Let $s_1,\dots,s_r\in G^{(n)}(A_m)$ be socle elements and assume each $s_i$
  is contained in a multigraded component $G^{(n)}(A_m)_{d_i}$ where
  $d_i=(d_{i,1},\dots,d_{i,m})$ satisfies:
  \begin{itemize}[leftmargin=.25in]
  \item $\sum_j d_{i,j}\geq 3$,
  \item if $D_i:=\sum_j d_{i,j}\in\{\lfloor\frac{n}{2}\rfloor,\lceil\frac{n}{2}\rceil\}$, then every $d_{i,j}\leq D_i-2$.
  \end{itemize}
  If $J=(s_1,\dots,s_r)$, then $B:=G^{(n)}(A_m)/J$ satisfies
  $T^1(G^{(n)}(A_m)/\kk,B)_{<0}=0$.
\end{proposition}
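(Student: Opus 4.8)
\emph{Proof sketch.} The plan is to reduce the statement to the syzygy analysis of Section~\ref{sec:triviality-neg-tngts}. Applying the truncated cotangent complex model of \S\ref{sub:tcc} to the $G(A)$-module $B$, the vanishing $T^1(G(A)/\kk,B)_{<0}=0$ is equivalent to the assertion that every negatively graded $\mathscr{R}$-module map $\psi\colon I\to B$ is a $\kk$-linear combination of the differentiation maps $\partial_{i,j}\colon I\to G(A)\to B$. Indeed, because $B_0=\kk$ and $B_{<0}=0$, the image of $\operatorname{Der}_\kk(\mathscr{R},B)\to\Hom_\mathscr{R}(I,B)$ is, in degree $-1$, exactly the span of the $\partial_{i,j}|_I$, and is zero in degrees $\le-2$. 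So it suffices to prove this assertion about $\psi$.

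I would first record the structure of $J$ and $B$. Since each $s_i$ is a socle element, $\fmm J=0$, so $J$ is the $\kk$-span of $s_1,\dots,s_r$, with $s_i$ in total degree $\delta_i=\sum_j d_{i,j}$. By Corollary~\ref{cor:no-socles-of-smaller-deg} we have $\delta_i\ge D$, while \cite[Lemma~31]{bhargava-satriano} shows $G(A)_d=0$ whenever $\sum_j d_j\ge n$, so a nonzero socle element has total degree at most $n-1$; with the hypothesis $\sum_j d_{i,j}\notin\{n-2,n-1\}$ this forces $\delta_i\in\{D,\dots,n-3\}$, and when $\delta_i=D$ every entry of $d_i$ is $\le\lfloor n/2\rfloor-2$. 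Hence $B_d=G(A)_d$ for every multidegree $d$ of total degree $\le D-1$ or $\ge n-2$ (note $D\ge3$ since $n\ge6$, so this covers total degrees $1$ and $2$), and also whenever $d$ has total degree $D$ with some entry $\ge\lfloor n/2\rfloor-1$. For $\deg\psi\le-2$ this already finishes the argument exactly as in Proposition~\ref{prop:no-degleqneg2-generalcase}: $\psi$ annihilates each $e_1(x_i)$ (its image lies in $B_{<0}=0$), sends the quadratic generators into $B_0=\kk$, and the syzygy $x_{k,j}\psi(x_{i,j}^2)=x_{i,j}\psi(x_{i,j}x_{k,j})$ in $B_1=G(A)_1$ forces $\psi=0$ by linear independence of $x_{1,j},\dots,x_{n-1,j}$.

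The case $\deg\psi=-1$ is the heart of the matter, and here the claim is that the arguments establishing Lemma~\ref{l:degneg1-obvious-syzygy-generalcase}, Propositions~\ref{prop:degneg1-e1-syzygy-general-case-n-even}, \ref{prop:degneg1-e1-syzygy-general-case-n-odd}, \ref{prop:degneg1-second-syzygy-generaln}, and the proof of Theorem~\ref{thm:main-plus-tnt}, all go through with $\psi$ in the role of $\varphi$: they produce constants $a_{i,j}\in\kk$ with $\psi(x_{i,j}^2)=a_{i,j}x_{i,j}$, $\psi(x_{i,j}x_{k,j})=\frac{1}{2}(a_{k,j}x_{i,j}+a_{i,j}x_{k,j})$, and $\psi(e_1(x_i))=\frac{1}{2}\sum_j a_{i,j}$, so that $\psi=\frac{1}{2}\sum_{i,j}a_{i,j}\partial_{i,j}|_I$, as needed. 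The point is that $\psi(x_{i,j}^2)$ and $\psi(x_{i,j}x_{k,j})$ lie in $B_1=G(A)_1$ and $\psi(e_1(x_i))$ in $B_0=\kk$, exactly as for $\varphi$; and every relation among these images used in Section~\ref{sec:triviality-neg-tngts} is obtained by applying the map to one of the polynomial syzygies of $I$ appearing there and extracting a multidegree component. Each such relation therefore lives in a multidegree of total degree $2$, $D$, or $n-2$, the degree-$D$ ones always having an entry equal to $n/2$, $n/2-1$, $(n-1)/2$, or $(n-3)/2$, hence at least $\lfloor n/2\rfloor-1$. By the preceding paragraph $B$ and $G(A)$ coincide in all such multidegrees, so each of these relations is literally an identity in $G(A)$, to which the reasoning of Section~\ref{sec:triviality-neg-tngts} --- including the passages to the auxiliary quotient rings $R$ and the non-vanishing Lemmas~\ref{l:degneg1-e1-syzygy-general-case--helper}--\ref{l:partition-shape-02011111-non-zero-elmt}, all of which are statements about $G(A)$ itself --- applies verbatim.

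The main obstacle is thus this bookkeeping: going through Section~\ref{sec:triviality-neg-tngts} to confirm that relations among $\psi(x_{i,j}^2)$, $\psi(x_{i,j}x_{k,j})$, and $\psi(e_1(x_i))$ are only ever compared in multidegrees of total degree $2$, $D$, or $n-2$, and that the degree-$D$ ones all carry an entry $\ge\lfloor n/2\rfloor-1$ and so avoid the multidegrees in which $J$ is supported. Granting this, $\psi$ is a $\kk$-linear combination of the $\partial_{i,j}|_I$ in every negative degree, and therefore $T^1(G(A)/\kk,B)_{<0}=0$.
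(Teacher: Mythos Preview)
Your proposal is correct and follows essentially the same approach as the paper: reduce to showing that every negatively graded $\mathscr{R}$-module map $I\to B$ is a combination of the $\partial_{i,j}$, dispose of degrees $\le-2$ via Proposition~\ref{prop:no-degleqneg2-generalcase}, and then verify that the syzygy arguments of \S\ref{sec:triviality-neg-tngts} carry over because $B$ and $G(A)$ agree in every multidegree where a relation is extracted. Your bookkeeping is slightly more explicit than the paper's (you spell out that $\fmm J=0$ forces $J$ to be the $\kk$-span of the $s_i$, and you identify the precise range $\{D,\dots,n-3\}$ of total degrees supporting $J$), but the substance is the same.
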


\begin{proof}
  Let $\mathscr{R}=\kk[x_{i,j}]$ and let $\varphi\colon I\to B$ be a
  graded $\mathscr{R}$-module map of negative degree. 

  First suppose $\deg(\varphi)\leq-2$. Then
  \eqref{eqn:syzygy-used-to-show-deg--2-vanishes} holds modulo $J$;
  the terms in the equation have degree at most $1$ and none of the $s_i$ have degree $1$, so we see
  \eqref{eqn:syzygy-used-to-show-deg--2-vanishes} remains true. Thus,
  the proof of Proposition \ref{prop:no-degleqneg2-generalcase} shows
  $\varphi=0$.

  Now suppose $\deg(\varphi)=-1$.  Note first that \eqref{eqn:first-quadratic-syzgy} holds
  modulo $J$ and since all terms in the ensuing expansion
  have degree $2<D_i$ for all $i$, the equations following
  \eqref{eqn:first-quadratic-syzgy} also remain true. As a result, the
  conclusion of Lemma \ref{l:degneg1-obvious-syzygy-generalcase} is
  still valid. 

To handle the case where $n$ is even, it therefore suffices to show that the proof of Proposition \ref{prop:main-n-even-geq4} still applies in our setting. For this, it is enough to show that the conclusions of  Proposition \ref{prop:degneg1-e1-syzygy-general-case-n-even} and
  Proposition \ref{prop:degneg1-second-syzygy-generaln} remain  valid. Note that \eqref{first-interesting-syzygy-when-n-is-even}
  holds modulo $J$. By assumption, $J$ contains no elements of
  multidegree $(\frac{n}{2}-1,0,\dots,0,1,0,\dots,0)$, where the $1$
  is in the $i$th place and $i\neq1$. As a result,
  \eqref{eqn:first-interesting-syzygy-first-eqn} holds. Similarly, $J$
  contains no elements of multidegree $(\frac{n}{2},0,\dots,0)$, and
  so \eqref{eqn:multidegree-n-over-2} holds. Hence, the conclusion of
  Proposition \ref{prop:degneg1-e1-syzygy-general-case-n-even} remains
  valid. Finally, Proposition \ref{prop:degneg1-second-syzygy-generaln} still
  holds since the equations considered in the proof only involve syzygies of degree at most $2$.  We conclude that
  $\varphi$ is a $\kk$-linear combination of derivatives $\partial_{i,j}$.

We now turn to the case where $n$ is odd. We must show the proof of Proposition \ref{prop:GnTNT-evenImpliesOdd} applies. As shown above, the conclusions of Proposition \ref{prop:no-degleqneg2-generalcase} and Lemma \ref{l:degneg1-obvious-syzygy-generalcase} hold so we again have a commutative diagram
\[
\xymatrix{
I\ar[r]^-{\varphi}\ar@{^{(}->}[d] & G^{(n)}(A_m)/J\ar@{->>}[d]\\
I+K\ar[r]^-{\psi} & G^{(n-1)}(A_m)/J'
}
\]
where $J'$ is the ideal generated by the images of the $s_i$ in $G^{(n-1)}(A_m)$, and where we let $\psi(K)=0$. Since $n-1\geq4$ is even, there are $c_{i,j}\in\kk$ such that $\psi=\sum_{i=1}^m\sum_{j=1}^n c_{i,j}\partial_{i,j}=:\partial$. Replacing $\varphi$ by $\varphi-\partial$, we may assume $\varphi(I)\subset K + K'$, where $K'$ is generated by all $s_i$ mapping to $0$ in $G^{(n-1)}(A_m)$. We must prove $\varphi=0$. Since all $D_i\geq3$, we see $\varphi(x_{i,j}x_{\ell,j})=\varphi(e_1(x_i))=0$ for $j\neq n$. Using that \eqref{eqn:first-interesting-syzygy-odd-case-case2} holds and using our hypothesis on the $d_{i,j}$, we see \eqref{eqn:first-interesting-syzygy-odd-case-case2-last} holds and so we again find $\varphi(x_{i,n}x_{\ell,n})=0$.

  We have therefore shown that $T^1(G^{(n)}(A_m)/\kk,B)_{<0}=0$.
\end{proof}

We next give a general criterion to show $T^1(B'/A',B')_{<0}=0$, where
$A'\to B'$ is a surjection of rings. (We add superscript primes so as
not to conflict with our running notation
$G^{(n)}(A_m)=\mathscr{R}/I$, $B = G^{(n)}(A_m)/J$, etc.) In the proof of Theorem \ref{thm:main-higher-gc-socle} we apply this criterion to
show that $T^1(B/G^{(n)}(A_m),B)_{<0}=0$.

\begin{proposition}\label{prop:TNT-TBAB}
  Let $I'\subset S'=\kk[y_1,\dots,y_n]$ be a graded ideal. Let
  $A'=S'/I'$ and $D$ be the minimal degree of a socle element of
  $A'$. If $s_1,\dotsc,s_r$ are linearly independent degree $D$ socle
  elements modulo $I'$, $J'=(s_1,\dotsc,s_r)+I'$, and $B'=S'/J'$ has
  no socle elements in degrees less than $D$, then
  $T^1(B'/A',B')_{<0}=0$.
\end{proposition}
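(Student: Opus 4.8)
The plan is to read off $T^1(B'/A',B')$ from an explicit model of the truncated cotangent complex of the surjection $A'\to B'$, exploiting the fact that its kernel is a square-zero ideal assembled from copies of the residue field.

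First I would identify the kernel $N$ of $A'\to B'$. Write $\fmm$ for the maximal graded ideal (of $A'$ or of $B'$, as appropriate). Each image $\overline{s_i}\in A'$ satisfies $\fmm\,\overline{s_i}=0$, so $N=(\overline{s_1},\dots,\overline{s_r})=\kk\overline{s_1}\oplus\dots\oplus\kk\overline{s_r}$ — linear independence of the $s_i$ gives the direct sum — and $N$ is concentrated in degree $D$. In particular $N^2\subseteq\fmm N=0$, so $N$ is a module over $B'=A'/N$, and as a graded $B'$-module $N\cong\bigoplus_{i=1}^{r}\kk(-D)$, a sum of $r$ shifted copies of the residue field.

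Next I would build $L_{B'/A',\bullet}$ from the tautological presentation $B'=A'/N$: take $R=A'$, so that $\Omega^1_{R/A'}=0$, choose a graded surjection $F\twoheadrightarrow N$ of a free $A'$-module, set $Q=\ker$, and let $\Kos\subseteq Q$ be the Koszul submodule. The complex then reduces to the two-term complex $0\longleftarrow F\otimes_{A'}B'\stackrel{d_2}{\longleftarrow}Q/\Kos$. Dualizing into $B'$ and taking $H^1$, the degree-$0$ term $\Hom_{B'}(\Omega^1_{R/A'}\otimes B',B')$ vanishes, so $T^1(B'/A',B')$ is identified with the $A'$-linear maps $F\to B'$ that vanish on $Q$; the Koszul generators impose no further condition, since they lie in $NF$, which is precisely the kernel of $F\to F\otimes_{A'}B'$. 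Hence $T^1(B'/A',B')\cong\Hom_{B'}(N/N^2,B')=\Hom_{B'}(N,B')$, recovering the standard description of $T^1$ for a surjection; cf.\ \cite[\S3]{Hartshorne--2010}.

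Finally I would carry out the grading bookkeeping. A graded $B'$-homomorphism $N\to B'$ of degree $j$ is determined by the images of the generators $\overline{s_i}$, each of which must be a degree-$(D+j)$ element killed by $\fmm$; thus $T^1(B'/A',B')_j\cong\bigoplus_{i=1}^{r}\Soc(B')_{D+j}$. For $j<0$ one has $D+j<D$, and by hypothesis $B'$ has no socle elements in degrees below $D$, so every summand vanishes and $T^1(B'/A',B')_{<0}=0$, as desired. The argument is short; the only points requiring care are verifying that the Koszul-relations term contributes nothing — which is exactly where the square-zeroness of $N$ enters — and tracking the degree shift so that the hypothesis on $\Soc(B')$ is applied in the correct range of degrees.
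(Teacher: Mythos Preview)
Your proof is correct and follows essentially the same route as the paper's: both take $R=A'$ so that $L_{B'/A',0}=0$, identify the kernel $N\cong\kk(-D)^{\oplus r}$, and conclude that $T^1(B'/A',B')\cong\Hom_{A'}(N,B')$, which forces generators to land in $\Soc(B')$; the degree hypothesis then kills the negative part. The paper is slightly more explicit in choosing $F=A'(-D)^{\oplus r}$ with $Q=\fmm_{A'}(-D)^{\oplus r}$, while you phrase the same computation via the standard identification $T^1\cong\Hom_{B'}(N/N^2,B')$, but the substance is identical.
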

\begin{proof}
  We first construct the truncated cotangent complex
  $L_{B'/A',\bullet}$. We have a surjective map $\pi\colon A'\to B'$,
  from which we see that $L_{B'/A',0}=\Omega^1_{A'/A'}\otimes_{A'}
  B'=0$. Next, $\ker\pi=J'/I'$ which, as a graded $A'$-module, is
  isomorphic to $\kk(-D)^{\oplus r}$. We then have a short exact
  sequence
  \[
  0\to \fmm_{A'}(-D)^{\oplus r}\to A'(-D)^{\oplus r}\to J'/I'\to 0
  \]
  and we let $\Kos\subset\fmm_{A'}(-D)^{\oplus r}$ denote the Koszul
  syzygies. From this data, we see the truncated cotangent complex is
  given by
  \[
  L_{B'/A',\bullet} \colon 0 \longleftarrow {B'}(-D)^{\oplus r}
  \longleftarrow \fmm_{A'}(-D)^{\oplus r}/\Kos.
  \]
  Every element of $T^1(B'/A',B')$ is represented by a $B'$-module map
  $\varphi\colon B'(-D)^{\oplus r}\to B'$ which vanishes when
  precomposed with $\fmm_{A'}(-D)^{\oplus r}/\Kos\to B'(-D)^{\oplus
    r}$; in particular, it vanishes when precomposed with
  $\fmm_{A'}(-D)^{\oplus r}\to B'(-D)^{\oplus r}$, and hence must send
  the generators of $B'(-D)^{\oplus r}$ to $\Soc(B')$. However, there
  are no non-zero socle elements of degree less than $D$, so if
  $\varphi$ is negatively graded, it must vanish.
\end{proof}

Combining Propositions \ref{prop:TNT-TAkB} and \ref{prop:TNT-TBAB} will yield all statements of Theorem \ref{thm:main-higher-gc-socle}, except for \ref{socleeven} and \ref{socleodd}; we address \ref{socleeven} and \ref{socleodd} in the result below

\begin{proposition}
  \label{prop:no-low-deg-soc-B}
Let $m\geq n-1$, and keep the notation and hypotheses of Theorem \ref{thm:main-higher-gc-socle}. If \ref{socleeven} or \ref{socleodd} hold, then $B$ has no socle elements of degree less than $D$.
\end{proposition}

\begin{proof}
  Any such socle element of $B$ would yield a non-zero element in
  $G^{(n)}(A_m)\cap \Ann(\fmm^2)$ of degree less than $D$.  By Corollary
  \ref{cor:no-annm2-of-smaller-deg}, $n$ must be odd and all such
  elements live in copies of $V_{\varepsilon}$ in degree $D-1$.
\end{proof}

\begin{proof}[{Proof of Theorem \ref{thm:main-higher-gc-socle}}]
  The pair of natural ring maps $\kk \to G^{(n)}(A_m) \to B$ yields a long
  exact sequence containing the following portion, see e.g.,
  \cite[Theorem 3.5]{Hartshorne--2010}:
  \[
  \dotsb \longrightarrow T^1(B/G^{(n)}(A_m),B) \longrightarrow T^1(B/\kk,B)
  \longrightarrow T^1(G^{(n)}(A_m)/\kk,B) \longrightarrow \dotsb.
  \]
We know $T^1(G^{(n)}(A_m)/\kk,B)_{<0}$ vanishes by Proposition \ref{prop:TNT-TAkB} and $T^1(B/G^{(n)}(A_m),B)_{<0}$ vanishes by Proposition \ref{prop:TNT-TBAB}. Thus, $T^1(B/\kk,B)_{<0} = 0$. It follows from \cite[Theorem 1.2]{Jelisiejew--2019} that all components of the Hilbert scheme containing $[B]$ are elementary. The remaining statements of Theorem \ref{thm:main-higher-gc-socle} are handled by Proposition \ref{prop:no-low-deg-soc-B}.
\end{proof}

Lastly, we turn to Corollary \ref{cor:main-higher-gc-socle}, giving a formula for the
value of $r$ appearing in Theorem \ref{thm:main-higher-gc-socle}.

\begin{proof}[{Proof of Corollary \ref{cor:main-higher-gc-socle}}]
  Let $D$ and $\mu$ be as in Theorem \ref{thm:socles-gc}. If
  $d=(d_1,\dots,d_m)$ with $\sum_i d_i=D$, then the socle of
  $G^{(n)}(A_m)$ contains every copy of $V_\mu$ in $G^{(n)}(A_m)_d$. When $n\geq6$ is even,
  Theorem \ref{thm:main-higher-gc-socle} tells us we may
  take the quotient by all such copies $V_\mu$ and maintain triviality
  of negative tangents, provided that every
  $d_i\leq\lfloor\frac{n}{2}\rfloor-2$.
  Letting $\lambda$ be the partition associated to $d$,
  the above condition exactly means that $\lambda$ is in the set $\cP$ defined before the statement of Corollary \ref{cor:main-higher-gc-socle}.
  By Theorem \ref{thm:structure-thm-higher-gc}, all such copies of $V_\mu$
  together have dimension
  \[
  R_n:=
  \sum_{\lambda\in\cP}m_\lambda K_{\mu\lambda}\dim V_\mu.
  \]
  It therefore suffices to show that $R_n$ agrees with the function
  $R(n)$.

  Using the hook length formula \cite[Theorem 3.10.2]{sagan}, one
  computes
  \[ 
  \dim V_\mu =
  \frac{n!}{(\frac{n}{2})!(\frac{n}{2}+1)!}=C_{n/2}, 
  \]
  for even $n$.
  Since $\lambda_1=\mu_1$, it is easy to see that when $n$ is even,
  there is precisely one semi-standard Young tableau of shape $\mu$
  and content $\lambda$, hence $K_{\mu\lambda}=1$.
  This establishes that $R_n=R(n)$.
\end{proof}

\section{Smoothness and obstructions for \texorpdfstring{$G^{(n)}(A_m)$}{GnAm}}
\label{sec:vnos}

We now turn to Theorem \ref{thm:VNOS}, which addresses precisely when the obstruction space for $G^{(n)}(A_m)$ vanishes. In particular, this shows $G^{(n)}(A_m)$ is smooth for $n=3$ and for some sporadic cases where $n=4$.

\begin{proof}[{Proof of Theorem \ref{thm:VNOS}}]
We first handle the case when $n=3$. By \eqref{eqn:G3Am}, we see the ideal defining $G^{(3)}(A_m)$ is generated by quadratics. So all syzygies live in degree at least $3$. Thus, any map $\psi\colon L_{G^{(3)}(A_m),2}\to G^{(3)}(A_m)$ of non-negative degree must have image in degree at least $3$, however $G^{(3)}(A_m)$ is concentrated in degree at most $2$. Thus, $T^2(G^{(n)}(A_m)/\kk,G^{(n)}(A_m))_{\geq0}$ vanishes.

The remaining arguments proceed as follows. We choose a homogeneous element $s\in L_{2,G^{(n)}(A_m)}$ corrresponding to a minimal syzygy of $I$, and choose a homogeneous non-zero socle element $f\in G^{(n)}(A_m)$ with $\deg(f)\geq\deg(s)$. We then let
\[
\psi\colon L_{2,G^{(n)}(A_m)}\to G^{(n)}(A_m)
\]
send $s$ to $f$ and send all other minimal syzygies to $0$. The map $\psi$ is well-defined since $f$ is in the socle; note that $\deg(\psi)=\deg(f)-\deg(s)\geq0$. We then show $[\psi]\neq0$.

Consider first the case where $m\geq2$ and $n\geq8$. Let $\ell=2$ (resp.~$\ell=3$) if $n$ is even (resp.~odd), and let
\[
f = \prod_{j=1}^\ell x_{2,j}\cdot \prod_{j=\ell+1}^{(n+\ell)/2}x_{1,j}.
\]
We see that the multidegree $(\frac{n-\ell}{2},\ell,0,\dots,0)$ piece of $G^{(n)}(A_m)$ is generated as an $S_n$-representation by $f$. Since this multidegree piece is non-zero by Theorem \ref{thm:structure-thm-higher-gc}, we see $f\neq0$. Furthermore, $f$ is in the socle by Lemma \ref{l:lemma31}\ref{lemma31part}. Now, if $n$ is even, let $s$ be the syzygy among $e_1(x_2)$ and $x_{2,1}^2,\dots,x_{2,n}^2$ given by equation (8) with $\sigma=\id$. If $n$ is odd, let $s$ be the syzygy among $e_1(x_2)$, $x_{2,1}^2,\dots,x_{2,n-1}^2$, and $x_{2,n}$ given by equation \eqref{eqn:first-interesting-syzygy-odd-case-case2}. Then we consider the map $\psi$ sending $s$ to the socle $f$, and sending all other syzygies to zero; note that
\[
\deg(\psi)=\deg(s)-\deg(f)=0.
\]
If $[\psi]=0$, then $\psi$ extends to a map $\varphi\colon L_{G^{(n)}(A_m)/\kk,1} \to G^{(n)}(A_m)$. This implies that $f$ is expressible as an $\kk[x_{i,j}]$-linear combination of monomials all of which have multidegree $(d_1,\dots,d_m)$ with $d_2\geq (n+\ell)/2 - 2$, e.g., if $n$ is even, we then have
\[
f = \prod_{k\textrm{\ odd}} (x_{2,k}-x_{2,k+1})
\varphi(e_1(x_1)) - 
 \sum_{j\textrm{\ odd}}\prod_{\substack{k\textrm{\ odd}\\k\neq
          j}}
    (x_{2,k}-x_{2,k+1})\bigl(\varphi(x_{2,j}^2)
      - \varphi(x_{2,j+1}^2)\bigr).
\]
Since $n\geq8$, we see $(n+\ell)/2 - 2>\ell$, which is a contradiction. Thus, $[\psi]\neq0$.

It remains to consider the cases $4\leq n\leq 7$. Assuming $m\geq n+1$, we take
\[
f=x_{1,1}\dots x_{n-1,n-1}
\]
and $s$ to be the cubic syzygy between $x_{n,n}^2$ and $x_{n,n}x_{n+1,n}$. Again by Theorem \ref{thm:structure-thm-higher-gc}, $f$ is non-zero since it generates the mutidegree piece $(1,\dots,1,0,\dots,0)$ of $G^{(n)}(A_m)$ which is the sign representation; it is a socle by Lemma \ref{l:lemma31}\ref{lemma31part}. We see $\deg(\psi) = n-4\geq0$. If $[\psi]\neq0$, then an argument analogous to the above shows that $f\in(x_{n,n},x_{n+1,n})$ which is not the case, as one sees from multidegrees.

We have now reduced to finitely many cases, which one may check by computer. Alternatively, one may further reduce the number of computer checks by using arguments analogous to the above. For $m\geq3$ and $n\geq5$, one may take $s$ to be the cubic syzygy between $x_{2,n}^2$ and $x_{2,n}x_{3,n}$, and 
\[
f = x_{1,1}x_{1,2}\dots x_{1,\lfloor n/2\rfloor}
\]
for $n$ even and 
\[
f = x_{1,1}x_{1,2}\dots x_{1,\lfloor n/2\rfloor}x_{2,\lceil n/2\rceil}.
\]
for $n$ odd. Thus, one need only check the cases $(n,m)\in\{(4,m)\mid 2\leq m\leq 4\}\cup\{(n,2)\mid 4\leq n\leq 7\}$ by computer.
\end{proof}

\begin{remark}\label{rmk:VNOS}
Determining whether or not $G^{(n)}(A_m)$ is smooth appears to be a subtle question. Outside of specialized cases, there are few techniques to show $G^{(n)}(A_m)$ is smooth when the obstruction space is non-vanishing. Most relevant to the current paper, in \cite{Jelisiejew--2019}, Jelisiejew constructs an infinite family of algebras which have trivial negative tangents and define smooth points of the Hilbert scheme, yet their obstruction spaces do \emph{not} vanish, see \cite[\S1.1]{Satriano--Staal--2023} for a detailed overview of Jelisiejew's method of proof. Unfortunately, Jelisiejew's techniques are not applicable in our setting. Indeed, when $m\geq3$, \cite[Theorem 4.12]{Jelisiejew--2019} does not apply since the monomial ideal $M$ generated by quadratics of $I$ does not define a smooth point of the Hilbert scheme. On the other hand, when $m=2$, the ideal $M$ does define a smooth point, however \cite[Theorem 4.12]{Jelisiejew--2019} is still not applicable since the map $\partial_{\geq0}$ need not be surjective, e.g., $\partial_0$ is not surjective for $(m,n) = (2,5)$. It is especially worth noting that when $M$ is not smooth, Jelisiejew's strategy to show triviality of negative tangents also does not apply in our setting, hence the need for our explicit computations in Section \ref{sec:triviality-neg-tngts}.
\end{remark}

In light of Remark \ref{rmk:VNOS}, we pose:

\begin{question}\label{q:VNOS}
For $n\geq4$ and $(n,m)\notin\{(4,2),(4,3)\}$, does $G^{(n)}(A_m)$ define a singular point of $\hilb^{d(n,m)}(\bbA^{n(m-1)})$?
\end{question}

\section{Data for rings of rank at most \texorpdfstring{$6$}{6}}
\label{sec:last}

We conclude by gathering data on Question \ref{q:higher-gc-TNT} for small $n$ and all isomorphism classes of small rank rings. 
As mentioned in the introduction, in \cite[Table 1]{Poonen--2008--iso}, Poonen lists all isomorphism
classes of $\kk$-algebras of rank $m\leq 6$; for any $m>6$, it is well-known that there are infinitely many isomorphism classes of rings of ranks $m$. Table \ref{table} summarizes computations done in \emph{Macaulay2}; a check mark ($\checkmark$) indicates that $G^{(n)}(A)$ has trivial negative tangents, $\times$ indicates it does not, and $?$ indicates that our computation ran for at least $12$ hours without terminating. Note that $G^{(1)}(A)\simeq\kk$ always trivially has trivial negative tangents, while $G^{(2)}(A) \cong
A/(x_1^2,\dotsc,x_m^2)$ 
never has trivial negative tangents when
$2\le \rank A\le 6$, as the Hilbert scheme is irreducible in these
cases. Thus, Table \ref{table} considers the values $n\geq3$.



\begin{table}[ht] 
  \caption[Isom Classes]{A list of all isomorphism classes of
    $\kk$-algebras $A = S/I$ of low rank where $G^{(n)}(A)$ has
    trivial negative tangents
  }
  \centering
  \resizebox{.65\columnwidth}{!}{%
  \begin{tabular}{cc|c|c|c|c|}
    \cline{3-6}
    & & \multicolumn{4}{c|}{$G^{(n)}(A)$ has TNT} \\
    \cline{1-6}
    \multicolumn{1}{|c|}{$S$} & $I$ & $n=3$ & $4$ & $5$ & $6$ \\
    \hline\hline
    \multicolumn{6}{|c|}{$\rank 2$} \\
    \hline
    \multicolumn{1}{|c|}{$\kk[x]$} & $\llrr{x^2}$ & $\times$ &
    $\times$ & $\times$ & $\times$ \\
    \hline
    \multicolumn{6}{|c|}{$\rank 3$} \\
    \hline
    \multicolumn{1}{|c|}{$\kk[x]$} & $\llrr{x^3}$ & $\times$ &
    $\times$ & $\times$ & $\times$ \\
    \multicolumn{1}{|c|}{$\kk[x,y]$} & $\llrr{x,y}^2$ & $\times$ &
    \checkmark & \checkmark & \checkmark \\
    \hline
    \multicolumn{6}{|c|}{$\rank 4$} \\
    \hline
    \multicolumn{1}{|c|}{$\kk[x]$} & $\llrr{x^4}$ & $\times$ &
    $\times$ & $\times$ & $\times$ \\
    \multicolumn{1}{|c|}{$\kk[x,y]$} & $\llrr{x^2,xy,y^3}$ & $\times$
    & $\times$ & $\times$ & \checkmark \\ 
    \multicolumn{1}{|c|}{$\kk[x,y]$} & $\llrr{x^2,y^2}$ & \checkmark &
    $\times$ & \checkmark & \checkmark \\
    \multicolumn{1}{|c|}{$\kk[x,y,z]$} & $\llrr{x,y,z}^2$ & \checkmark
    & \checkmark & \checkmark & \checkmark \\
    \hline
    \multicolumn{6}{|c|}{$\rank 5$} \\
    \hline
    \multicolumn{1}{|c|}{$\kk[x]$} & $\llrr{x^5}$ & $\times$ &
    $\times$ & $\times$ & $\times$ \\
    \multicolumn{1}{|c|}{$\kk[x,y]$} & $\llrr{x^2,xy,y^4}$ & $\times$
    & $\times$ & $\times$ & $\times$ \\
    \multicolumn{1}{|c|}{$\kk[x,y]$} & $\llrr{x^2+y^3,xy}$ & $\times$
    & $\times$ & $\times$ & ? \\
    \multicolumn{1}{|c|}{$\kk[x,y]$} & $\llrr{x^3,xy,y^3}$ & $\times$
    & $\times$ & \checkmark &
    \checkmark\footnotemark[3] \\
    \multicolumn{1}{|c|}{$\kk[x,y]$} & $\llrr{x^2,xy^2,y^3}$ &
    $\times$ & $\times$ & $\times$ & \checkmark\footnotemark[3] \\
    \multicolumn{1}{|c|}{$\kk[x,y,z]$} & $\llrr{x^2,xy,xz,y^2,yz,z^3}$
    & $\times$ & $\times$ & \checkmark & ? \\
    \multicolumn{1}{|c|}{$\kk[x,y,z]$} & $\llrr{x^2,xy,xz,y^2,z^2}$ &
    $\times$ & \checkmark & \checkmark & \checkmark\footnotemark[3] \\
    \multicolumn{1}{|c|}{$\kk[x,y,z]$} &
    $\llrr{x^2+y^2,x^2+z^2,xy,xz,yz}$ & \checkmark & \checkmark &
    \checkmark\footnotemark[3] & ? \\
    \multicolumn{1}{|c|}{$\kk[x,y,z,w]$} & $\llrr{x,y,z,w}^2$ &
    \checkmark & \checkmark & \checkmark & \checkmark \\
    \hline
    \multicolumn{6}{|c|}{$\rank 6$} \\
    \hline
    \multicolumn{1}{|c|}{$\kk[x]$} & $\llrr{x^6}$ & $\times$ &
    $\times$ & $\times$ & $\times$ \\
    \multicolumn{1}{|c|}{$\kk[x,y]$} & $\llrr{x^2,xy,y^5}$ & $\times$
    & $\times$ & $\times$ & $\times$\footnotemark[3] \\
    \multicolumn{1}{|c|}{$\kk[x,y]$} & $\llrr{x^2+y^4,xy}$ & $\times$
    & $\times$ & $\times$ & ? \\ 
    \multicolumn{1}{|c|}{$\kk[x,y]$} & $\llrr{x^3,xy,y^4}$ & $\times$
    & $\times$ & $\times$ & ? \\
    \multicolumn{1}{|c|}{$\kk[x,y]$} & $\llrr{x^3+y^3,xy}$ & $\times$
    & \checkmark & $\times$ & ? \\ 
    \multicolumn{1}{|c|}{$\kk[x,y]$} & $\llrr{x^2,xy^2,y^4}$ &
    $\times$ & $\times$ & $\times$ & ? \\
    \multicolumn{1}{|c|}{$\kk[x,y]$} & $\llrr{x^2+y^3,xy^2,y^4}$ &
    $\times$ & $\times$ & ? & ? \\
    \multicolumn{1}{|c|}{$\kk[x,y]$} & $\llrr{x^2,y^3}$ & $\times$ &
    \checkmark & \checkmark & ? \\
    \multicolumn{1}{|c|}{$\kk[x,y]$} & $\llrr{x,y}^3$ & $\times$ &
    $\times$ & \checkmark\footnotemark[3] & ? \\
    \multicolumn{1}{|c|}{$\kk[x,y,z]$} & $\llrr{x^2,xy,xz,y^2,yz,z^4}$
    & $\times$ & $\times$ & $\times$\footnotemark[3] & ? \\
    \multicolumn{1}{|c|}{$\kk[x,y,z]$} &
    $\llrr{x^2,xy,xz,y^2+z^3,yz,z^4}$ & $\times$ & $\times$ &
    $\times$\footnotemark[3] & ? \\
    \multicolumn{1}{|c|}{$\kk[x,y,z]$} & $\llrr{x^2,xy+z^3,xz,y^2,yz,z^4}$
    & $\times$ & $\times$ & $\times$\footnotemark[3] & ? \\
    \multicolumn{1}{|c|}{$\kk[x,y,z]$} & $\llrr{x^3,xy,y^2-xz,yz,z^2}$
    & $\times$ &\checkmark & \checkmark\footnotemark[3] & ? \\
    \multicolumn{1}{|c|}{$\kk[x,y,z]$} & $\llrr{x^2+y^2-xz,xy,xz-yz,z^2}$
    & \checkmark & \checkmark & ? & ? \\
    \multicolumn{1}{|c|}{$\kk[x,y,z]$} & $\llrr{x^2,xy,xz,y^2,yz^2,z^3}$
    & $\times$ & $\times$ & $\times$\footnotemark[3] & ? \\
    \multicolumn{1}{|c|}{$\kk[x,y,z]$} & $\llrr{x^2,xy,xz,y^3,yz,z^3}$
    & $\times$ & $\times$ & \checkmark\footnotemark[3] & ? \\
    \multicolumn{1}{|c|}{$\kk[x,y,z]$} & $\llrr{x^3,xy,xz,y^2,z^2}$
    & $\times$ & \checkmark & \checkmark\footnotemark[3] & ? \\
    \multicolumn{1}{|c|}{$\kk[x,y,z]$} & $\llrr{x^2+y^2-z^2,xy,xz,yz}$
    & \checkmark & \checkmark & \checkmark\footnotemark[3] & ? \\
    \multicolumn{1}{|c|}{$\kk[x,y,z]$} & $\llrr{x^2,xy,y^2-z^2,yz}$
    & \checkmark & \checkmark & \checkmark\footnotemark[3] & ? \\
    \multicolumn{1}{|c|}{$\kk[x,y,z]$} & $\llrr{x^2,xy,y^2,z^2}$
    & \checkmark & $\times$ & \checkmark\footnotemark[3] & ? \\
    \multicolumn{1}{|c|}{$\kk[x,y,z,w]$} &
    $\llrr{x^2,xy,xz,xw,y^2,yz,yw,z^2,zw,w^3}$ & $\times$ & $\times$ &
    \checkmark\footnotemark[3] & ? \\
    \multicolumn{1}{|c|}{$\kk[x,y,z,w]$} &
    $\llrr{x^2,xy,xz,xw,y^2,yz,yw,z^2,w^2}$ & \checkmark & \checkmark
    & \checkmark\footnotemark[3] & ? \\
    \multicolumn{1}{|c|}{$\kk[x,y,z,w]$} &
    $\llrr{x^2,xy,xz,xw,y^2+z^2,y^2+w^2,yz,yw,zw}$ & \checkmark &
    \checkmark & \checkmark\footnotemark[3] & ? \\
    \multicolumn{1}{|c|}{$\kk[x,y,z,w]$} &
    $\llrr{x^2,xy-zw,xz,xw,y^2,yz,yw,z^2,w^2}$ & \checkmark &
    \checkmark & \checkmark\footnotemark[3] & ? \\ 
    \multicolumn{1}{|c|}{$\kk[x,y,z,w,v]$} & $\llrr{x,y,z,w,v}^2$ &
    \checkmark & \checkmark & \checkmark & \checkmark \\
    \hline    
  \end{tabular}
  }
  \label{table}
\end{table}

\footnotetext[3]{calculated over $\kk = \ZZ/101\ZZ$}


 \bibliography{gc-elementary}{} \bibliographystyle{amsalpha}

\end{document}

\section{old things}
tells us the
following combinatorial formula:
\begin{equation}\label{eqn:dimGAn}
  d(n) := \dim_\kk G(A_n/\kk) =
  \!\!\sum_{\mu\vartriangleright\lambda,\,\,\mu_1=\lambda_1}\!\!\!m_\lambda
  K_{\mu\lambda}\dim V_\mu,
\end{equation}
where $\lambda=(\lambda_1,\dots,\lambda_{n})$ and
$\mu=(\mu_1,\dots,\mu_{n})$ are partitions of $n$ with
$\lambda_1\geq\dotsb\geq\lambda_n\geq0$,
$\mu_1\geq\dotsb\geq\mu_n\geq0$, and $\mu$ dominates $\lambda$; here
$V_\mu$ denotes the Specht module associated to $\mu$,
$K_{\mu\lambda}$ denotes the Kostka number \cite[Definition
  2.11.1]{sagan}, and also, letting $k_j:=\#\{i\neq1\mid
\lambda_i=j\}$ for $0\leq j\leq n-1$, we write $m_\lambda$ for the
multinomial coefficient $\binom{n-1}{k_0,\dotsc, k_{n-1}}$.

  We now turn to the cases $n=4$ and $n=5$. Here a direct computer
  check (e.g.\ using \emph{Macaulay2} \cite{Grayson--Stillman})
  reveals that when $n=4$, one may quotient by all copies of $V_\mu$
  in degree $2$ and maintain triviality of negative tangents; this
  accounts for $R(4)=12$ dimensions. Lastly, when $n=5$, triviality of
  negative tangents persists when one quotients by all copies of
  $V_\mu$ in $G^{(n)}(A_m)_d$ for all $d$ whose associated partition is
  $(1,1,1)$; this accounts for $40$ dimensions.

\begin{table}[hb]
  \caption{Examples of Hilbert functions of $G^{(n)}(A_m) / (s_1,\dotsc,s_r)$
    with trivial negative tangents coming from Corollary \ref{cor:main-higher-gc-socle}\label{hfcor19}}
  \centering
  \resizebox{\columnwidth}{!}{%
  {\renewcommand{\arraystretch}{1.5}
  \begin{tabular}{|c||l|l|l|l|}
    \hline    
    $n$ & $G^{(n)}(A_{n-1})$ & $G^{(n)}(A_{n})$ & $G^{(n)}(A_{n+1})$ 
 \\ \hline\hline 
    $6$ 
    & $(1, 25, 235, 915-r, 680, 1)$
    & $(1, 30, 339, 1600-r, 1545, 6)$
    & $(1, 35, 462, 2562-r, 3045, 21)$
\\
    & $0\leq r\leq 50$ 
    &  $0\leq r\leq 100$ 
    & $0\leq r\leq 175$ 
     \\ \hline
    $8$ 
    & $(1, 49, 1001, 10745, 60501-r, 128457, 26173, 1)$
    & $(1, 56, 1308, 16072, 104006-r, 259224, 67396, 8)$
    & $(1, 63, 1656, 22920, 167580-r, 479430, 152124, 36)$
\\
    & $0\leq r\leq 2254$ 
    & $0\leq r\leq 3724$ 
    & $0\leq r\leq 5796$ 
     \\ \hline
    $10$ 
    & $(1, 81, 2871, 57831, 714276, 5334372-r, 20506458, 22075434, 1116270, 1)$
    & $(1, 90, 3545, 79380, 1090710, 9080748-r, 39230220, 48815280, 3112800, 10)$
    & $(1, 99, 4290, 105710, 1599345, 14686155-r, 70427016, 99477180, 7702200, 55)$
\\
    & $0\leq r\leq 50652$ 
    & $0\leq r\leq 79884$ 
    & $0\leq r\leq 121044$ 
     \\ \hline
  \end{tabular}}
  }
\end{table}